\newcommand\Z{\mathbb{Z}}
\newcommand\Q{\mathbb{Q}}
\newcommand\F{\mathbb{F}}
\newcommand{\mc}[1]{\href{https://beta.lmfdb.org/ModularCurve/Q/#1/}{\texttt{#1}}}
\newcommand{\ec}[1]{\href{https://www.lmfdb.org/EllipticCurve/Q/#1}{\texttt{#1}}}
\DeclareMathOperator{\Gal}{Gal}
\DeclareMathOperator{\GL}{GL}
\DeclareMathOperator{\SL}{SL}
\DeclareMathOperator{\PGL}{PGL}
\DeclareMathOperator{\Aut}{Aut}
\DeclareMathOperator{\rad}{rad}
\DeclareMathOperator{\ord}{ord}
\DeclareMathOperator{\lcm}{lcm}
\DeclareMathOperator{\Quo}{Quo}
\DeclareMathOperator{\tors}{tors}
\theoremstyle{plain}
\newtheorem{theorem}{Theorem}
\newtheorem{corollary}[theorem]{Corollary}
\newtheorem{lemma}[theorem]{Lemma}
\newtheorem{proposition}[theorem]{Proposition}
\theoremstyle{definition}
\newtheorem{conjecture}[theorem]{Conjecture}
\theoremstyle{remark}
\newtheorem{remark}[theorem]{Remark}
\title[Possible adelic indices for elliptic curves admitting a rational cyclic isogeny]{The possible adelic indices for elliptic curves admitting a rational cyclic isogeny}
\author[Finnerty]{Kate Finnerty}
\address{Kate Finnerty, Department of Mathematics and Statistics, Boston University, Boston, MA 02215}
\email{ksfinn@bu.edu}
\author[Genao]{Tyler Genao}
\address{Tyler Genao, Department of Mathematics, The Ohio State University, Columbus, OH 43210}
\email{genao.5@osu.edu}
\author[Mayle]{Jacob Mayle}
\address{Jacob Mayle, Department of Mathematical Sciences, University of Delaware, Newark, DE 19716}
\email{mayle@ud.edu}
\author[Rakvi]{Rakvi}
\address{Rakvi, Department of Mathematics, The University of Maine, Orono, ME 04469} 
\email{raakvi@gmail.com}
\date{\today}
\subjclass[2010]{Primary 11G05; Secondary 11F80.}
\begin{document}
\begin{abstract} In the 1970s, Serre proved that the adelic index of a non-CM elliptic curve over a number field is finite. More recently, Zywina conjectured the complete set of adelic indices for such curves over $\Q$. In this article, we prove that Zywina's conjecture is true for the family of non-CM elliptic curves over $\Q$ that admit a nontrivial rational cyclic isogeny. This strengthens a result of Lemos that resolved Serre's uniformity question for the same family of curves. Our proof proceeds by analyzing a collection of modular curves associated with each prime isogeny degree, using recent advances on $\ell$-adic images, isogeny-torsion graphs, and computations of models and rational points.
\end{abstract}
\maketitle

\section{Introduction}

Let $E$ be an elliptic curve defined over a number field $F$. For each positive integer $n$, the natural action of the absolute Galois group $\Gal(\overline{F}/F)$ on the $n$-torsion subgroup $E[n] \cong (\Z/n\Z)^2$ gives rise to the \emph{mod $n$ Galois representation of $E$}
\[
\rho_{E,n}\colon \Gal(\overline{F}/F)\longrightarrow \Aut(E[n])\cong \GL_2(\Z/n\Z).
\]
Taking the inverse limit of $E[n]$ over integers $n \geq 1$ ordered by divisibility (with a compatible choice of bases for $E[n]$), we obtain the \emph{adelic Tate module}
\( T(E)\coloneqq\varprojlim E[n],\)
which is a free module of rank $2$ over the ring $\widehat{\Z}$ of profinite integers. The action of $\Gal(\overline{F}/F)$ on each $E[n]$ extends to a continuous action on $T(E)$, giving the \emph{adelic Galois representation of $E$}
\[
\rho_E\colon \Gal(\overline{F}/F)\longrightarrow \Aut(T(E))\cong \GL_2(\widehat{\Z}).
\]
We denote the \emph{adelic image of $E$} by $G_E\coloneqq\rho_E(\Gal(\overline{F}/F))$. This is well-defined up to conjugacy, because choosing an isomorphism $\Aut(T(E))\cong \GL_2(\widehat{\Z})$ amounts to choosing a basis (and likewise for the isomorphism $\Aut(E[n])\cong \GL_2(\Z/n\Z)$). For each positive integer $n$, the image of $G_E$ under the mod $n$ reduction map $\pi_n\colon \GL_2(\widehat{\Z})\rightarrow \GL_2(\Z/n\Z)$, written as $G_E(n)$, is equal to $\rho_{E,n}(\Gal(\overline{F}/F))$. Throughout this article, the base field $F$ will be clear from context (and will almost always be $\Q$).

The adelic Galois representation of an elliptic curve encodes all information about the rationality of its torsion points. It is therefore natural to ask what can be said about the adelic image $G_E$. As it turns out, the geometric endomorphism ring $\text{End}(E)\coloneqq\text{End}_{\overline{F}}(E)$ of $E$ has a strong influence on the structure of $G_E$. An elliptic curve $E$ is \emph{non-CM} (or \emph{without complex multiplication}) if $\text{End}(E)$ is as small as possible, i.e., $\text{End}(E)=\Z$.
A landmark result of Serre states that the adelic image of $E$ is ``large'' when $E$ is non-CM. More precisely, we have the following.

\begin{theorem}[Serre's Open Image Theorem, \cite{Ser72}]\label{Thm_SerresOIT}

Let $E$ be an elliptic curve over a number field. If $E$ is non-CM, then $G_E$ is an open subgroup of $\GL_2(\widehat{\Z})$, equivalently of finite index in $\GL_2(\widehat{\Z})$.
\end{theorem}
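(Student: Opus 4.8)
The plan, following \cite{Ser72}, is to reduce the adelic assertion to two local statements together with a gluing step. Since $\GL_2(\widehat{\Z}) = \prod_\ell \GL_2(\Z_\ell)$, a closed subgroup $G_E$ is open precisely when: (i) its image $G_E(\ell^\infty) \coloneqq \varprojlim_n G_E(\ell^n)$ in $\GL_2(\Z_\ell)$ is open for every prime $\ell$; (ii) $G_E(\ell^\infty) = \GL_2(\Z_\ell)$ for all but finitely many $\ell$; and (iii) a product $\prod_\ell H_\ell$ of open subgroups, almost all equal to $\GL_2(\Z_\ell)$, already lies in $G_E$. For (iii) I would combine the facts that $\SL_2(\Z_\ell)$ is perfect for $\ell \ge 5$ and that these groups have no common nonabelian composition factor for distinct $\ell$ (a Goursat-type argument) with the observation that $\det \circ \rho_E$ is the cyclotomic character, hence surjective onto $\widehat{\Z}^\times$. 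By the standard lifting lemma---a closed subgroup of $\GL_2(\Z_\ell)$ with $\ell \ge 5$ that surjects onto $\GL_2(\F_\ell)$ is all of $\GL_2(\Z_\ell)$---conditions (i) and (ii) follow once we establish: $(\ast)$ $\rho_{E,\ell}$ is surjective for all but finitely many $\ell$; and $(\ast\ast)$ $G_E(\ell^\infty)$ is open for every $\ell$.

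For $(\ast\ast)$, fix $\ell$ and let $\mathfrak{g}_\ell \subseteq \mathfrak{gl}_2(\Q_\ell)$ be the Lie algebra of the Zariski closure of $G_E(\ell^\infty)$; openness is equivalent to $\mathfrak{g}_\ell = \mathfrak{gl}_2(\Q_\ell)$. By the finiteness of the isogeny class of $E$ the Tate module $V_\ell E$ is $\Gal$-irreducible, and since $E$ is non-CM, $\mathrm{End}_{\Q_\ell[\Gal]}(V_\ell E) = \Q_\ell$ (via Serre's theory of abelian $\ell$-adic representations, or Faltings); hence $\mathfrak{g}_\ell$ is an algebraic subalgebra of $\mathfrak{gl}_2$ with scalar commutant and surjective trace, and a short classification---discarding the Borel-type subalgebra again by isogeny finiteness---forces $\mathfrak{g}_\ell = \mathfrak{gl}_2(\Q_\ell)$. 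It is convenient to carry this out first for a single prime: openness of one $\ell$-adic image already yields, via Haar measure and Chebotarev, that the primes $\mathfrak{p}$ of $F$ with $a_{\mathfrak{p}} = 0$ have density zero---a fact $(\ast)$ will use.

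For $(\ast)$ I would invoke Dickson's classification: for $\ell$ large a proper subgroup of $\GL_2(\F_\ell)$ lies in a Borel subgroup, in the normalizer of a (split or nonsplit) Cartan subgroup, or in one of finitely many exceptional subgroups with projective image $A_4$, $S_4$, or $A_5$, and it suffices to bound the $\ell$ for which each case occurs. If the image is exceptional, then for a prime $\mathfrak{p}$ of good reduction with $a_{\mathfrak{p}} \ne 0$ the projective order of $\rho_{E,\ell}(\Frob_{\mathfrak{p}})$---the multiplicative order of $\overline{\alpha}_{\mathfrak{p}}/\overline{\beta}_{\mathfrak{p}}$ for the Frobenius eigenvalues---is at most $5$, which forces $\ell$ to divide a fixed nonzero integer built from $\alpha_{\mathfrak{p}}$, $\beta_{\mathfrak{p}}$ and $a_{\mathfrak{p}}^2 - 4N\mathfrak{p}$. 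If the image lies in a Cartan normalizer but not the Cartan, the index-two subgroup forces $a_{\mathfrak{p}} \equiv 0 \pmod{\ell}$ for a density-$\tfrac12$ set of $\mathfrak{p}$; choosing $\ell$ large relative to $\mathfrak{p}$ turns this into $a_{\mathfrak{p}} = 0$, contradicting the sparsity of supersingular primes. If the image lies in a nonsplit Cartan it is abelian, which one shows can occur for only finitely many $\ell$: good ordinary reduction at a place over $\ell$ makes the inertia image fall outside every nonsplit Cartan, while good supersingular reduction forces the image to be the full nonsplit Cartan, an abelian extension of $F$ whose ramification at $\ell$ contradicts class field theory for $\ell$ large (after a possible preliminary quadratic base change). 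Finally, if the image lies in a Borel subgroup---which also handles the split-Cartan case---the representation is reducible with characters $\psi_\ell\,\psi_\ell' = \overline{\chi}_{\mathrm{cyc},\ell}$; the stable line extends to a finite flat subgroup scheme of $E[\ell]$, so $\psi_\ell$ becomes unramified at $\ell$ after a cyclotomic twist and has bounded conductor away from $\ell$, hence is one of finitely many characters up to a fixed Dirichlet twist and a cyclotomic power; pigeonholing over the infinitely many offending $\ell$ then gives $a_{\mathfrak{p}} \equiv c_{\mathfrak{p}} \pmod{\ell}$ for infinitely many $\ell$, with $c_{\mathfrak{p}} \in \Z$ and $|c_{\mathfrak{p}}| \ge N\mathfrak{p} - 1$, contradicting the Hasse bound $|a_{\mathfrak{p}}| \le 2\sqrt{N\mathfrak{p}}$.

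The main obstacle is $(\ast)$, and within it the Cartan and Borel cases: these are exactly where an elliptic curve with complex multiplication would \emph{fail} to produce a contradiction, so the non-CM hypothesis must enter essentially---through the density-zero statement for supersingular primes and through the reducible case's incompatibility with one-sided ramification at $\ell$. The delicate technical ingredient is the analysis of inertia at $\ell$ acting on $E[\ell]$: one needs the classification of finite flat group schemes over $\Z_\ell$ (Oort--Tate, Raynaud) in the Borel case and the ordinary/supersingular dichotomy in the Cartan case, together with enough care that the conductor-dependent error term in Chebotarev's theorem is dominated when $\ell$ is chosen large relative to the auxiliary prime $\mathfrak{p}$.
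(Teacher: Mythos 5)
The paper does not prove this theorem; it is cited directly from Serre's 1972 paper \cite{Ser72} and used as a black box, so there is no in-paper proof to compare against. I will therefore evaluate your sketch on its own terms.

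Your overall architecture is a faithful outline of Serre's argument: reduce openness in $\GL_2(\widehat{\Z})$ to (i) openness of each $\ell$-adic image, (ii) fullness of the $\ell$-adic image for almost all $\ell$, and (iii) a gluing step; prove (i) via the Lie algebra of the Zariski closure of the $\ell$-adic image together with irreducibility of $V_\ell E$ and triviality of the commutant (both consequences of the Shafarevich isogeny-finiteness theorem); deduce (ii) from mod-$\ell$ surjectivity for almost all $\ell$ via the Frattini-type lifting lemma (as in \cite[IV-23]{MR263823}, which the paper itself cites); and for (iii), invoke that $\SL_2(\Z_\ell)$ is topologically perfect for $\ell\geq 5$, that distinct $\mathrm{PSL}_2(\F_\ell)$'s furnish no common nonabelian simple quotient, and that $\det\circ\rho_E$ is the cyclotomic character. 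All of that is correct and is exactly the shape of Serre's proof; your phrase ``no common nonabelian composition factor'' should more precisely read ``no common nonabelian simple \emph{quotient},'' since $\Z/2\Z$ is a common composition factor, but the Goursat argument for topologically perfect groups goes through as you indicate.

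Where you genuinely diverge is in the Borel case of $(\ast)$. You argue via the characters $\psi_\ell$, $\psi_\ell'$ with $\psi_\ell\psi_\ell'=\overline{\chi}_{\mathrm{cyc},\ell}$, Raynaud's constraints at $\ell$, a twist-and-pigeonhole step, and the Hasse bound. This is the Mazur--Momose line of attack, and over a general number field it is genuinely delicate: the characters take values in varying groups $\F_\ell^\times$, so ``one of finitely many characters'' only makes sense after raising to a bounded power, at which point unramified characters (controlled only by the class group of $F$) and units enter the picture, and the asserted lower bound $|c_{\mathfrak{p}}|\geq N\mathfrak{p}-1$ requires the Dirichlet twist to take integer values, which is not automatic. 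Serre's own argument is considerably shorter: by isogeny finiteness (which you have already invoked for $(\ast\ast)$), the isogeny class of $E$ is finite, and if $E\to E'$ admits rational isogenies of distinct prime degrees $\ell\neq\ell'$ to the same $E'$, composing one with the dual of the other gives a degree-$\ell\ell'$ endomorphism of $E$, impossible since $\mathrm{End}(E)=\Z$ and $\ell\ell'$ is not a square. This handles the Borel (and split-Cartan) case for all number fields in a few lines, reusing the same deep input you already spent on $(\ast\ast)$; your character-theoretic route buys uniformity (bounds independent of $E$ over $\Q$, in the spirit of Mazur) but is more work than the theorem actually demands, and needs more care to close over a general $F$. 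The remainder of your case analysis (exceptional via bounded projective order of Frobenius, Cartan-normalizer via $a_{\mathfrak{p}}\equiv 0\pmod\ell$ against supersingular sparsity, nonsplit Cartan via the ordinary/supersingular dichotomy for inertia at $\ell$) is a reasonable summary, though you should note that making the Cartan-normalizer step precise requires first showing the quadratic character is unramified at $\ell$ for $\ell$ large, so that the quadratic extension varies over a fixed finite set as $\ell$ grows; as written, ``choosing $\ell$ large relative to $\mathfrak{p}$'' hides this dependence.
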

Serre's Open Image Theorem is equivalent to the existence of a positive integer $n$ such that $G_E = \pi_n^{-1}(G_E(n))$; the least such $n$ is called the \emph{adelic level} of $E$. Since Serre proved Theorem \ref{Thm_SerresOIT}, there has been substantial progress  toward understanding Galois images of non-CM elliptic curves over $\Q$. Notably, Zywina \cite{zywina2015possibleimagesmodell} and Sutherland \cite{MR3482279}  conjecturally classified all possible mod $\ell$ Galois images and gave efficient algorithms for computing them. Rouse--Zureick-Brown \cite{MR3500996} and Rouse--Sutherland--Zureick-Brown \cite{MR4468989} did similarly for $\ell$-adic images. In 2022, Zywina \cite{ZywinaOpen} gave a breakthrough algorithm for computing adelic images of elliptic curves.

An important invariant related to the adelic Galois image of a non-CM elliptic curve $E$ is the \emph{adelic index}, the quantity $[\GL_2(\widehat{\Z}):G_E]$. In general, the adelic index cannot be determined from the $\ell$-adic indices alone because of possible entanglements between the $\ell$-adic images. For example, consider the elliptic curve \ec{37.a1} in the LMFDB. For all primes $\ell$, its $\ell$-adic image is $\GL_2(\Z_{\ell})$, yet its adelic index is 2. In fact, the adelic index of any non-CM elliptic curve over $\Q$ must be at least $2$, as noted in \cite[Proposition 22]{Ser72}. Moreover, Jones \cite{MR2563740} proved that, when ordered by naive height, almost all elliptic curves over $\Q$ have adelic index $2$. Adelic indices greater than 2 occur for infinitely many $\overline{\Q}$-isomorphism classes \cite[Theorem 4]{MR2563740}, but these are sparse asymptotically.

Zywina conjectured that there are only finitely many possible adelic indices for non-CM elliptic curves over $\Q$, and he even proposed a specific finite set $\mathcal{I}$ of all  adelic indices \cite{zywina2022possibleindicesgaloisimage, ZywinaOpen}.

\begin{conjecture}[Conjecture 1.5, \cite{ZywinaOpen}]\label{Conj_PossibleAdelicIndices}

If $E/\Q$ is a non-CM elliptic curve, then 
\[[\GL_2(\widehat{\Z}) : G_E] \in \mathcal{I},\]
where $\mathcal{I}$ is the finite set
\[
\mathcal{I}\coloneqq \left\{\begin{array}{c} 2, 4, 6, 8, 10, 12, 16, 20, 24, 30, 32, 36, 40, 48, 54, 60, 72, 80, 84, 96, 108,\\ 112, 120, 128, 144, 160, 182, 192, 200, 216, 220, 224, 240, 288, 300, 336, \\360, 384, 480, 504, 576, 768, 864, 1152, 1200, 1296, 1536, 2736 \end{array}\right\}. 
\]
\end{conjecture}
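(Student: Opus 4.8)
The plan is to reduce the computation of $[\GL_2(\widehat{\Z}):G_E]$ to a finite packet of local and entanglement data attached to $E$, and then to enumerate all possibilities for that packet. By Theorem~\ref{Thm_SerresOIT}, a non-CM $E/\Q$ has a finite adelic level $N$, so $G_E=\pi_N^{-1}(G_E(N))$ and hence $[\GL_2(\widehat{\Z}):G_E]=[\GL_2(\Z/N\Z):G_E(N)]$. Write $N=\prod_\ell \ell^{e_\ell}$. Under the isomorphism $\GL_2(\Z/N\Z)\cong\prod_\ell\GL_2(\Z/\ell^{e_\ell}\Z)$ the subgroup $G_E(N)$ surjects onto each factor $G_E(\ell^{e_\ell})$ (here $[\GL_2(\Z/\ell^{e_\ell}\Z):G_E(\ell^{e_\ell})]=[\GL_2(\Z_\ell):G_E(\ell^\infty)]$ because the $\ell$-adic level divides $N$), so by Goursat's lemma $G_E(N)$ is a fiber product of the $G_E(\ell^{e_\ell})$ over a common quotient. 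Consequently
\[
[\GL_2(\widehat{\Z}):G_E]=\Big(\prod_\ell[\GL_2(\Z_\ell):G_E(\ell^\infty)]\Big)\cdot\Big[\prod_\ell G_E(\ell^{e_\ell}):G_E(N)\Big],
\]
where the last factor is the \emph{entanglement index}. It therefore suffices to bound (i) the set of primes $\ell$ with $G_E(\ell^\infty)\neq\GL_2(\Z_\ell)$; (ii) each $\ell$-adic index $[\GL_2(\Z_\ell):G_E(\ell^\infty)]$; and (iii) the entanglement index. Given such bounds the right-hand side above ranges over a finite, explicit set, and the conjecture follows by checking that every element of that set lies in $\mathcal{I}$.

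For step (ii) I would invoke the classification of $\ell$-adic images of non-CM elliptic curves over $\Q$: Rouse--Sutherland--Zureick-Brown \cite{MR4468989} (together with Rouse--Zureick-Brown \cite{MR3500996} for $\ell=2$) provide, for each small prime $\ell$ and each non-maximal image type, an explicit finite list of possibilities with their indices, while for all remaining primes the image is expected to be all of $\GL_2(\Z_\ell)$. For step (iii), Zywina's adelic algorithm \cite{ZywinaOpen} shows that the entanglement index is governed by an ``entanglement level'' that is bounded and divisible only by the primes occurring in (i), so it too ranges over a finite set that can be enumerated by a group-theoretic computation inside the relevant $\GL_2(\Z/m\Z)$.

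The hard part is step (i): bounding the set of nonsurjective primes of a non-CM $E/\Q$ is precisely Serre's uniformity question, which remains open. By Mazur's isogeny theorem the Borel (reducible) case is ruled out for primes $\ell>37$; Serre's arguments dispose of the exceptional images (those with projective image $A_4$, $S_4$, or $A_5$) for large $\ell$; and the split-Cartan-normalizer case is resolved by work of Bilu--Parent--Rebolledo. What remains is to show that a non-CM $E/\Q$ cannot have mod-$\ell$ image inside the normalizer of a nonsplit Cartan for $\ell$ large---equivalently, that $X_{\mathrm{ns}}^+(\ell)$ has only cuspidal and CM rational points for large $\ell$---and this is unknown for infinitely many $\ell$; indeed Conjecture~\ref{Conj_PossibleAdelicIndices} itself implies this for $\ell\geq 79$, since $[\GL_2(\Z/\ell\Z):C_{\mathrm{ns}}^+(\ell)]=\ell(\ell-1)/2>2736$ there. (A secondary gap is that the $\ell$-adic classification used in (ii) is itself complete only modulo the rational points on finitely many modular curves of positive genus.) For these reasons a fully general, unconditional proof of Conjecture~\ref{Conj_PossibleAdelicIndices} appears out of reach with present techniques. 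What one \emph{can} do is carry out steps (i)--(iii) for any subfamily of non-CM curves over $\Q$ on which Serre's uniformity is established and the relevant modular curves are understood; the family of curves admitting a nontrivial rational cyclic isogeny---on which Lemos resolved Serre's uniformity---is such a family, and executing this program for it, via recent progress on $\ell$-adic images, isogeny--torsion graphs, and explicit models and rational points, is the route pursued in this article.
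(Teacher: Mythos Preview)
Your assessment is correct: the statement is recorded in the paper as a conjecture, not a theorem, and the paper does \emph{not} prove it in full generality. You have accurately identified the obstruction---Serre's uniformity question, specifically the nonsplit Cartan case---and you correctly note that the paper instead establishes the conjecture only for the subfamily of non-CM curves admitting a nontrivial rational cyclic isogeny, where Lemos's theorem removes that obstruction.

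One small difference worth noting between your sketch and the paper's actual implementation for the restricted family: rather than factoring the adelic index as a product of $\ell$-adic $\GL_2$-indices times an entanglement index as you write, the paper passes via $[\GL_2(\widehat{\Z}):G_E]=[\SL_2(\widehat{\Z}):G_E']$ and then shows (Proposition~\ref{P:6-p}) that $G_E'$ splits as a direct product of its $6$-adic and $p$-adic pieces, because the commutator subgroups $G_E(6^\infty)'$ and $G_E(p^\infty)'$ have no common nontrivial quotient for $p\ge 5$. This bypasses the need to enumerate entanglement quotients directly and reduces the problem to computing commutator indices inside $\SL_2(\Z_6)$ and $\SL_2(\Z_p)$. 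Your product-of-local-indices-times-entanglement formulation is equivalent in spirit, but the commutator approach is what makes the computation tractable in practice.
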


This conjecture has been verified for all non-CM elliptic curves over $\Q$ currently appearing in the $L$-Functions and Modular Forms Database (LMFDB) \cite{lmfdb}, consisting of more than three million curves. The conjecture is closely connected to {Serre's uniformity question,} which asks whether there exists a prime $p$ such that for any prime $\ell>p$, the mod $\ell$ Galois representation of any non-CM elliptic curve $E / \Q$ is surjective. It is now widely conjectured that Serre's uniformity question has an affirmative answer with $p=37$. 

Over the past several decades, significant progress toward Serre's uniformity question has come from the study of rational points on modular curves along Dickson's classification: if the mod $\ell$ representation is not surjective, its image is contained in a Borel subgroup, the normalizer of a split or nonsplit Cartan subgroup, or an exceptional subgroup. Serre \cite{Ser72} proved that the exceptional case cannot occur for $\ell > 13$. Mazur \cite{MR482230} proved that if $\ell>37$, then $G_E(\ell)$ is not contained in a Borel subgroup. Bilu and Parent \cite{MR2753610} proved that $G_E(\ell)$ cannot be contained in the normalizer of a split Cartan subgroup for sufficiently large $\ell$.  Bilu, Parent, and Rebolledo \cite{MR3137477} later showed that the same result holds for \emph{every} prime $\ell \geq 11$, except  possibly $\ell=13$. The latter case was subsequently resolved via quadratic Chabauty by Balakrishnan, Dogra, M\"{u}ller, Tuitman, and Vonk \cite{MR3961086}. As for the case of nonsplit Cartan subgroups, recent work of Furio and Lombardo \cite{FurioLombardo23} shows that if $\ell>37$ is nonsurjective, then $G_E(\ell)$ is \emph{precisely equal to} the normalizer of a nonsplit Cartan subgroup. Showing that this does not happen for sufficiently large $\ell$ is the remaining problem for the uniformity question, but eliminating this case appears daunting.

Nevertheless, Serre's uniformity question is known to have a positive answer for certain families of non-CM elliptic curves over $\Q$ due to work of Lemos. He proved that  uniformity holds with $p = 37$ for the family of non-CM elliptic curves $E/\Q$ for which there exists a prime $q$ such that $G_E(q)$ is contained in the normalizer of a split Cartan subgroup of $\GL_2(\F_q)$ \cite{MR3968924}. Further, he proved that uniformity holds with $p = 37$ for the family of all non-CM elliptic curves admitting a nontrivial rational cyclic isogeny, which will be a starting point for this article.

\begin{theorem}[Theorem 1.1, \cite{MR3885140}]\label{Thm_Lemos}
Let $E/\Q$ be a non-CM elliptic curve that admits a nontrivial rational cyclic isogeny. Then for all primes $\ell>37$, the mod $\ell$ Galois representation of $E$ is surjective.
\end{theorem}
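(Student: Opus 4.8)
The plan is to argue by contradiction. Suppose $E/\Q$ is non-CM, admits a nontrivial rational cyclic isogeny, and yet $\rho_{E,\ell}$ fails to be surjective for some prime $\ell>37$. First I would reduce the isogeny hypothesis to prime level: if $E$ has a $\Q$-rational cyclic subgroup of order $N>1$, then for any prime $p\mid N$ the (necessarily unique, hence Galois-stable) subgroup of order $p$ it contains gives a rational $p$-isogeny, so $G_E(p)$ lies in a Borel subgroup of $\GL_2(\F_p)$. By Mazur's classification of rational isogenies of prime degree \cite{MR482230} together with Kenku's extension to composite degrees, and the non-CM hypothesis, one has $p\in\{2,3,5,7,11,13,17,19,37\}$; in particular $p\ne\ell$ and $p\le 37<\ell$, so the Borel constraint mod $p$ and the (still to be determined) non-surjectivity mod $\ell$ concern coprime levels.

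Next I would pin down the mod-$\ell$ image. Since $\rho_{E,\ell}$ is not surjective, Dickson's classification shows $G_E(\ell)$ is contained in a Borel subgroup, the normalizer of a split Cartan subgroup, the normalizer of a nonsplit Cartan subgroup, or an exceptional subgroup of $\GL_2(\F_\ell)$. As $\ell>37>13$: the exceptional case is ruled out by Serre \cite{Ser72}; the Borel case is ruled out by Mazur \cite{MR482230}, since a Borel image would produce a rational $\ell$-isogeny on a non-CM curve with $\ell>37$; and the normalizer of a split Cartan is ruled out by Bilu--Parent--Rebolledo \cite{MR3137477} together with Balakrishnan--Dogra--M\"uller--Tuitman--Vonk \cite{MR3961086}, as $\ell\ge 11$. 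Hence $G_E(\ell)$ is contained in the normalizer $C_{\mathrm{ns}}^{+}(\ell)$ of a nonsplit Cartan subgroup $C_{\mathrm{ns}}(\ell)\subset\GL_2(\F_\ell)$; in fact $G_E(\ell)=C_{\mathrm{ns}}^{+}(\ell)$ by Furio--Lombardo \cite{FurioLombardo23}, though only the containment is needed.

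The third and central step is to play the two constraints against each other to eliminate the nonsplit Cartan case. Let $K$ be the fixed field of $\rho_{E,\ell}^{-1}(C_{\mathrm{ns}}(\ell))$, a field that is $\Q$ or quadratic; then the restriction of $\rho_{E,\ell}$ to $\Gal(\overline{\Q}/K)$ has image in the abelian group $C_{\mathrm{ns}}(\ell)\cong\F_{\ell^2}^{\times}$, so over $\overline{\F}_\ell$ it decomposes as $\psi\oplus\psi^{\sigma}$, where $\sigma$ generates $\Gal(\F_{\ell^2}/\F_\ell)$ (equivalently $\psi^{\sigma}=\psi^{\ell}$) and $\psi\cdot\psi^{\sigma}=\chi_\ell|_{\Gal(\overline{\Q}/K)}$ is the mod-$\ell$ cyclotomic character. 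Thus $E_K$ is a non-CM elliptic curve with abelian mod-$\ell$ image --- a ``fake CM'' situation --- while the rational $p$-isogeny persists over $K$, furnishing a $\Gal(\overline{\Q}/K)$-stable line in $E_K[p]$ with isogeny character $\theta\colon\Gal(\overline{\Q}/K)\to\F_p^{\times}$. I would then confront these: the Cartan structure forces $\psi$ to be unramified outside $\ell$ and, by Serre's description of the inertial action on torsion \cite{Ser72}, of very restricted shape on inertia at primes above $\ell$, while the simultaneous $p$-isogeny exhibits $E$ as a rational point of the modular curve $X_0(p)\times_{X(1)}X_{\mathrm{ns}}^{+}(\ell)$. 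A Momose-type analysis --- tracking how $\theta$ and $\psi$ interact through the common $j$-invariant, or, what amounts to the same thing, applying Mazur's formal-immersion method to a suitable quotient of the Jacobian of that modular curve --- should then force the explicit bound $\ell\le 37$, contradicting $\ell>37$. For $p\in\{11,17,19,37\}$ the curve $X_0(p)$ has only finitely many non-cuspidal non-CM rational points, and it is cleaner to verify surjectivity of $\rho_{E,\ell}$ for $\ell>37$ directly on the resulting short list; the essential case is $p\in\{2,3,5,7,13\}$, where $X_0(p)\cong\mathbb{P}^1$ and infinitely many $E$ arise.

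The hard part is this last step, and specifically the requirement that the bound on $\ell$ be \emph{uniform} over the whole family. This is exactly what is missing in the split and nonsplit Cartan cases of Serre's uniformity question in general: Faltings' theorem gives finiteness of $X_{\mathrm{ns}}^{+}(\ell)(\Q)$ for each fixed $\ell$, but nothing uniform in $\ell$. The whole point of assuming a rational $p$-isogeny is to supply the rigidity that restores uniformity --- it pins down the isogeny character and hence the conductor and the ramification of $\rho_{E,\ell}$ tightly enough that Mazur's method, applied at a level divisible by $p\ell$, returns a bound independent of $\ell$. Choosing the correct (Eisenstein- or winding-type) quotient, checking the formal-immersion hypothesis, and clearing the finitely many sporadic curves coming from the larger $p$ is where essentially all of the work lies.
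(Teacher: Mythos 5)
This theorem is quoted from Lemos \cite{MR3885140}; the present paper does not reprove it, though Proposition \ref{P:j-integer} and Lemma \ref{lemma:fiberproductwithXns+} isolate and reuse its key mechanism. Your first two steps --- reducing to a prime isogeny degree $p$, and then using Dickson, Serre, Mazur, Bilu--Parent--Rebolledo, and Balakrishnan--Dogra--M\"uller--Tuitman--Vonk to force $G_E(\ell)$ into the normalizer of a nonsplit Cartan --- are correct and match the start of Lemos's argument. (Minor slip: for non-CM curves Mazur's list is $\{2,3,5,7,11,13,17,37\}$; $p=19$ arises only for CM curves.)

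The genuine gap is in the third, central step, which you yourself flag as "where essentially all of the work lies." You propose a Momose-type analysis or a Mazur-style formal-immersion argument on a quotient of the Jacobian of $X_0(p)\times_{X(1)}X_{\mathrm{ns}}^{+}(\ell)$. That is not how Lemos closes the argument, and it is unlikely to be salvageable: the absence of a suitable rank-zero quotient of the Jacobian of $X_{\mathrm{ns}}^{+}(\ell)$ valid uniformly in $\ell$ is exactly the obstruction that leaves the unconditional nonsplit-Cartan case of Serre's uniformity question open, and fibering over $X_0(p)$ does not manufacture one. What Lemos actually does --- and what this paper repackages as Proposition \ref{P:j-integer} --- is far more elementary. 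For $\ell>5$ with $\ell\ne p$, the containment $G_E(\ell)\subseteq C_{\mathrm{ns}}^{+}(\ell)$ forces $j(E)\in\Z[\tfrac{1}{\ell}]$ (Lemos's Section 3), and a separate potentially-good-reduction argument at $\ell$ (Lemos's Proposition 2.2) gives $j(E)\in\Z_\ell$; hence $j(E)\in\Z$. Since $E$ and its $p$-isogenous partner $E'$ both inherit the Cartan constraint mod $\ell$ (by Lemma \ref{lemma:isomorphicimages}, as $\ell\ne p$) and both admit $p$-isogenies, \emph{both} $j$-invariants are forced to be integers, and an explicit analysis of the degree-$(p+1)$ $j$-map on $X_0(p)\cong\mathbb{P}^1$ for $p\in\{2,3,5,7,13\}$ shows this happens for only finitely many non-CM rational points --- precisely the lists reproduced in the proof of Lemma \ref{lemma:fiberproductwithXns+}. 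Surjectivity of $\rho_{E,\ell}$ for $\ell>37$ is then checked directly on these curves, and on the finitely many non-CM rational points of $X_0(p)$ for $p\in\{11,17,37\}$. Your proposal never reaches this integrality pivot, and without it there is no route from a nonsplit Cartan image mod $\ell$ to a finite explicit list of candidate curves.
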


In this article, we prove Zywina's conjecture for all non-CM elliptic curves over $\Q$ that admit a nontrivial rational cyclic isogeny. Our main result, stated below, both builds upon and strengthens  Theorem \ref{Thm_Lemos}.

\begin{theorem}\label{t:mainthm} Let $E/\Q$ be a non-CM elliptic curve and $p$ be a prime number. If $E$ admits a rational isogeny of degree $p$, then  
\[ [\GL_2(\widehat{\Z}) : G_E ] \in \mathcal{I}_p, \]
where
\begin{align*}
    \mathcal{I}_2 &\coloneqq \{12, 48, 72, 96, 144, 192, 288, 384, 576, 768, 864, 1152, 1536\}, \\
    \mathcal{I}_3 &\coloneqq \{16, 32, 48, 96, 128, 144, 160, 288, 384, 768, 864, 1296 \}, \\
    \mathcal{I}_5 &\coloneqq \{48,192,288,384,576,864,1200\},\\  
    \mathcal{I}_7 &\coloneqq \{96,192,576,768\}, \\
    \mathcal{I}_{11} &\coloneqq \{480\},\\
    \mathcal{I}_{13} &\coloneqq \{336\},\\
    \mathcal{I}_{17} &\coloneqq \{576\}, \\
    \mathcal{I}_{37} &\coloneqq \{2736\}.
\end{align*}
In particular, Conjecture \ref{Conj_PossibleAdelicIndices} holds for all non-CM elliptic curves over $\Q$ that admit a nontrivial rational cyclic isogeny.
\end{theorem}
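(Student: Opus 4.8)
The plan is to reduce Theorem \ref{t:mainthm} to a finite computation with modular curves, treating each admissible isogeny degree $p$ in turn. By Mazur's theorem on rational isogenies of prime degree, a non-CM elliptic curve over $\Q$ can admit a rational isogeny of prime degree $p$ only for $p$ in $\{2,3,5,7,11,13,17,19,37,43,67,163\}$, and for $p \in \{19,43,67,163\}$ every elliptic curve over $\Q$ with such an isogeny has complex multiplication. Hence it suffices to establish the eight displayed sets $\mathcal{I}_p$ and to observe that their union lies in $\mathcal{I}$. So fix one such $p$ and a non-CM curve $E/\Q$ with a rational $p$-isogeny, so that $G_E(p)$ is contained in a Borel subgroup $B$ of $\GL_2(\F_p)$.

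The first step is to bound the level at which the adelic index is visible. Since $\det\circ\rho_E$ is the cyclotomic character and hence surjective, one has $[\GL_2(\widehat{\Z}):G_E] = [\SL_2(\widehat{\Z}) : G_E \cap \SL_2(\widehat{\Z})]$, so it suffices to control $G_E \cap \SL_2(\widehat{\Z})$. By Theorem \ref{Thm_Lemos}, $\rho_{E,\ell}$ is surjective for all primes $\ell > 37$; as each such $\ell$ exceeds $3$, a standard group-theoretic lemma (no proper closed subgroup of $\SL_2(\Z_\ell)$ surjects onto $\SL_2(\F_\ell)$ when $\ell \ge 5$) gives $G_E(\ell^\infty) = \GL_2(\Z_\ell)$, and since $\SL_2(\Z_\ell)$ is topologically perfect with unique nonabelian composition factor $\mathrm{PSL}_2(\F_\ell)$, a Goursat argument shows the level of $G_E \cap \SL_2(\widehat{\Z})$ is prime to $\ell$. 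Hence this level is supported on $\{2,3,5,7,11,13,17,37\}$, and feeding in the constraint $G_E(p) \subseteq B$ together with the classification of $\ell$-adic images of non-CM curves over $\Q$ due to Rouse--Sutherland--Zureick-Brown and the mod-$\ell$ classifications of Zywina and Sutherland bounds the exponent of each prime dividing it. This reduces the adelic index to an index at an explicit finite level $m$.

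Next, for each $p$ I would enumerate the possible groups $G_E$ up to conjugacy using modular curves above $X_0(p)$. For $p \in \{11,17,37\}$ the curve $X_0(p)$ has only finitely many non-cuspidal non-CM rational points, so $E$ has one of finitely many $j$-invariants; for a model of each, Zywina's adelic algorithm produces $G_E$ directly, and one verifies that the only quadratic twists (and, when $j \in \{0,1728\}$, quartic or sextic twists) that can change the adelic index are finitely many of bounded conductor, so the set of indices realized is exactly $\mathcal{I}_p$. For $p \in \{2,3,5,7,13\}$ the curve $X_0(p)$ is rational and carries a one-parameter family, so instead I would climb a finite tree of modular curves: enumerate all modular curves $X_H$ of level dividing $m$ that dominate $X_0(p)$ --- these arise as fiber products of $X_0(p)$ over the $j$-line with the relevant $\ell$-adic and entanglement modular curves, the admissible configurations being organized by the classification of isogeny-torsion graphs over $\Q$ --- compute for each a model and its full set of rational points using the cited databases and rational-point methods, and for every non-cuspidal non-CM rational point read off the subgroup $H \le \GL_2(\widehat{\Z})$ and its index. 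The union of all indices so obtained, over all rational points and all $p$, is $\bigcup_p \mathcal{I}_p \subseteq \mathcal{I}$, proving the theorem.

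The main obstacle I anticipate lies in the one-parameter cases $p \in \{2,3,5,7,13\}$, and especially in $p = 2$ and $p = 3$: at $\ell = 2$ and $\ell = 3$ the lifting lemma fails, the $\ell$-adic images of non-CM curves over $\Q$ are numerous and can have large index, and entanglement --- above all with the discriminant field $\Q(\sqrt{\Delta_E})$, and also with $\Q(\zeta_\ell)$ --- is pervasive, so the fiber products of $X_0(p)$ with the $2$-adic and $3$-adic modular curves proliferate into a sizable collection of modular curves of moderate level whose models must be written down and whose rational points must be \emph{provably} determined. Verifying that this collection is complete, that no rational point (and hence no admissible index) is missed, and that every listed index is in fact realized, is the delicate part of the argument; by contrast, the cases $p \in \{11,17,37\}$ reduce to a short list of direct computations once the twisting bookkeeping is arranged.
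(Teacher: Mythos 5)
Your high-level strategy matches the paper's: reduce via Lemos's theorem (Theorem \ref{Thm_Lemos}) to a finite set of primes, handle $p \in \{11,17,37\}$ by computing the finitely many rational points of $X_0(p)$ directly (and exploit that the adelic index depends only on $j(E)$, which is Lemma \ref{lem:jinv} and makes your twist bookkeeping unnecessary), and for $p \in \{2,3,5,7,13\}$ climb a tree of modular curves dominating $X_0(p)$. But there is a genuine gap in what you call "the delicate part," and it is exactly the piece that makes the paper's computation feasible. You propose to work at a single composite level $m$ and enumerate subgroups of $\GL_2(\widehat{\Z})$ via fiber products of $X_0(p)$ with "$\ell$-adic and entanglement modular curves." Without a mechanism to decouple the contributions of different primes, the number of such groups explodes (for $p = 2$ alone the paper already needs $188$ candidate $2$-adic images), and computing the rational points on the resulting modular curves of composite level is far beyond what the cited methods can provably deliver.

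The paper's missing ingredient is Proposition \ref{P:6-p}, proved from Jones's commutator criterion (Theorem \ref{P:m0}), the identity $[\GL_2(\widehat{\Z}):G_E]=[\SL_2(\widehat{\Z}):G_E']$, and a Goursat argument together with Lemma \ref{L:CommonQuo}. For $p \in \{5,7,13\}$ it gives
\[
[\GL_2(\widehat{\Z}) : G_E] = \bigl[\SL_2(\Z_6) : G_E(6^\infty)'\bigr]\cdot\bigl[\SL_2(\Z_p) : G_E(p^\infty)'\bigr]
\]
with \emph{no} entanglement factor between the $6$-adic and $p$-adic sides, because an open subgroup of $\GL_2(\Z_6)$ and an open subgroup of $B_0(p^\infty)$ have commutator subgroups with no common nontrivial finite quotient. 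This lets one enumerate $6$-adic images (the lattices $\mathcal{L}_p(K)$) and $p$-adic images \emph{separately}, rather than at a joint level. For $p\in\{2,3\}$ it shows the adelic index is determined by the $6$-adic image alone, and the only entanglement is a single Goursat factor $\delta_E$ between the $2$-adic and $3$-adic parts. Your Goursat-based level bound at the primes $\ell>37$ is in the right direction, but it only bounds the level; it does not give the factorization that controls which entanglements can contribute, and so does not reduce the tree to one that can actually be climbed. Related to this, your proposal leans on the RSZB classification to bound exponents at each prime; the paper instead uses RSZB only to list which modular curves to examine, and then proves the needed $\ell$-adic statements unconditionally (Propositions \ref{P:2isog-elladic}--\ref{P:13isog-elladic}) by determining rational points on the fiber products with $X_0(p)$.
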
  

The theorem above is sharp in the sense that for each prime $p \in \{2,3,5,7,11,13,17,37\}$ and each integer $n \in \mathcal{I}_p$, there exists a non-CM elliptic curve over $\Q$ admitting a rational isogeny of degree $p$ with index $n$. Such examples can be found using the search functionality of the LMFDB and are recorded in Appendix \ref{A:ExCurves}. These values of $p$ are precisely the possible degrees of a prime degree rational isogeny for a non-CM elliptic curve over $\Q$ by Mazur's theorem \cite{MR482230}. The final sentence of the theorem follows as a consequence of the results for individual prime degrees. Indeed, if $E/\Q$ admits a rational cyclic isogeny of degree $N \ge 2$, then $E$ admits a rational isogeny of degree $p$ for each prime $p$ dividing $N$, so its adelic index belongs to $\mathcal{I}_p \subseteq \mathcal{I}$ for each such $p$. 

The proof of Theorem \ref{t:mainthm} extends throughout most of this paper. Section \ref{s:prelim} reviews background on Galois representations, isogenies, twists, and modular curves. Section \ref{s:rationalpoints} discusses methods for computing rational points on modular curves. In Section \ref{s:possibleimages}, we determine all possible $\ell$-adic images containing $-I$ for non-CM elliptic curves admitting a rational isogeny of degree $p$ (and no rational isogeny of a larger prime degree) by studying rational points on modular curves and using Mazur and Kenku's results on isogenies \cite{MR482230,MR675184} along with Lemos's theorem (Theorem \ref{Thm_Lemos}). In Section \ref{s:adelicindex}, we use a result of Jones (Theorem \ref{P:m0}) to give a product formula for the adelic index involving $\ell$-adic indices for certain small primes and an entanglement factor. 

We complete the proof of Theorem \ref{t:mainthm} in Sections \ref{s:11and17and37}, \ref{s:5and7and13}, and \ref{s:2and3}. In Section \ref{s:11and17and37}, we consider the primes $p \in \{11,17,37\}$, for which the proof is straightforward. In general, the adelic index of an elliptic curve over $\Q$ depends only on its $j$-invariant (Lemma \ref{lem:jinv}), and in these cases, the modular curve $X_0(p)$ has only finitely many rational points. Thus, it suffices to compute the adelic index for one representative elliptic curve for each non-CM rational point on $X_0(p)$. In Section \ref{s:5and7and13}, we treat the primes $p \in \{5, 7, 13\}$ using a lattice-based approach that builds on ideas from \cite{MR3500996,MR4468989,ZywinaOpen}. In principle, the same approach could be applied for $p \in \{2,3\}$, but it would be computationally impractical due to the number of groups involved. Instead, in Section \ref{s:2and3} we take a product-based approach to handle the primes $p \in \{2,3\}$.

This article makes extensive use of computations carried out in \texttt{Magma}, Version \texttt{V2.28-21} \cite{MR1484478}. The GitHub repository containing code for this article is:

\centerline{\url{https://github.com/Rakvi6893/Adelic-indices-of-elliptic-curves-with-isogeny/}}

\noindent We make extensive use of data from the LMFDB \cite{lmfdb}. Throughout the article, we use the LMFDB labeling conventions for elliptic curves and modular curves. For elliptic curves, the label \texttt{N.aX} specifies the conductor, followed by the isogeny class index and the isomorphism class index within the isogeny class. For modular curves, the label \texttt{N.i.g.c.n} specifies the level, index, genus, Gassmann class identifier, and an integer that distinguishes nonconjugate subgroups when the first four identifiers coincide. The modular curves section of the LMFDB is currently in beta. All LMFDB labels are hyperlinked to the object's homepage. 

\subsection{Acknowledgments} We would like to thank Eran Assaf and Jeremy Rouse for their helpful suggestions related to computations of rational points on modular curves. We also thank Andrew Sutherland, Jeremy Rouse, and Jennifer Balakrishnan for their feedback on an earlier version of this article. The first-named author was partially supported by US-Israel BSF Grant 2022393.

\section{Preliminaries}\label{s:prelim}

\subsection{Galois Representations}

In the introduction, we discussed the mod $n$ and adelic Galois representations of an elliptic curve $E$ over a number field $F$. Between these, one can also consider the $n$-adic Galois representation of $E$. For any positive integer $n$, the ring of $n$-adic integers is
\[
\Z_n\coloneqq\varprojlim \Z/n^k\Z \cong \prod_{\ell\mid n}\Z_\ell.
\]
The natural action of $\Gal(\overline{F}/F)$ on the $n$-adic Tate module $T_n(E)=\varprojlim E[n^k]$ gives rise to the \emph{$n$-adic Galois representation}
\[
\rho_{E,n^\infty}\colon \Gal(\overline{F}/F)\longrightarrow \GL_2(\Z_n).
\] 
We may view $\rho_{E,n^\infty}$ as the composition of the adelic Galois representation $\rho_E \colon \Gal(\overline{F}/F) \to \GL_2(\widehat{\Z})$ with the $n$-adic projection map $\pi_{n^\infty}\colon \GL_2(\widehat{\Z})\rightarrow \GL_2(\Z_n)$. We denote the image of $\rho_{E,n^\infty}$ by $G_E(n^\infty)$.

Each of the Galois images $G_E, G_E(n^\infty),$ and $G_E(n)$ depends on the chosen basis and thus is well defined only up to conjugation. When defining the Galois representations, we adopt the convention that torsion points and Tate modules are represented as column vectors and that all matrix groups act by left multiplication. This agrees with the convention currently used in the LMFDB but differs from the convention used by Zywina in \cite{ZywinaOpen, ZywinaModular}. Consequently, when using Zywina's code to compute a Galois image or a modular curve, we must take a transpose.

For a subgroup $G\subseteq \GL_2(\widehat{\Z})$, the \emph{level} of $G$ is defined to be the least positive integer $n$ such that
\[ G = \pi_n^{-1}(\pi_n(G)), \]
provided such an $n$ exists. If $G$ is open with respect to the profinite topology of $\GL_2(\widehat{\Z})$, then such an $n$ must exist. We write $\det G$ to denote the image of $G$ under the determinant map 
\[ \det\colon \GL_2(\widehat{\Z})\longrightarrow \widehat{\Z}^\times.\]
We also write $G'\coloneqq[G, G]$ to denote the commutator subgroup of $G$. 

We continue to let $E$ denote an elliptic curve over a number field $F$. Write
$\chi \colon \Gal(\overline{F} / F) \to \widehat{\Z}^\times$
to denote the adelic cyclotomic character. It follows from the Weil pairing on $E$ that
\( \det \circ \rho_{E} = \chi\); see \cite[\S 3.8]{MR2514094} and \cite[IV-18 Lemma 2]{MR263823}. 
In particular, if $F = \Q$, then
\[
\det G_E = \chi(\Gal(\overline{\Q}/\Q)) = \widehat{\Z}^\times.
\]
In other words, the composition $\det \circ \rho_E$ is surjective. Lastly, we note that in the case  $F = \Q$, if $\ell \geq 5$ is a prime, then $\rho_{E,\ell}$ is surjective if and only if $\rho_{E,\ell^\infty}$ is surjective \cite[IV-23 Lemma 3]{MR263823}.

\subsection{Isogenies} \label{Sec:Isog}
In this subsection, we review some standard facts about isogenies and prove a lemma about the Galois images of elliptic curves related by a rational prime degree isogeny. 

First, let $\phi:E\rightarrow E'$ be an isogeny of two elliptic curves over a number field $F$. We call $\phi$ an \emph{$n$-isogeny} if its kernel $C$ is a cyclic group of order $n$. We say that $\phi$ is \emph{$F$-rational} if $C$ is stable under the action of $\Gal(\overline{F}/F)$, i.e., for all $\sigma\in \Gal(\overline{F}/F)$ and all $R\in C$, one has $\sigma(R)\in C$. Any cyclic subgroup $C\subseteq E(\overline{F})$ of order $n$ stabilized by $\Gal(\overline{F}/F)$ induces an $F$-rational $n$-isogeny to some elliptic curve $E'/F$ whose kernel is $C$; see \cite[Exercise 3.13]{MR2514094}. If $F=\Q$, then instead of saying \emph{$\Q$-rational}, we simply say \emph{rational}.

A well-known result of Kenku extends Mazur's \cite{MR482230} classification of rational isogenies of prime degree to all degrees $n$ for which a rational $n$-isogeny exists.

\begin{theorem}[Theorem 1, {\cite{MR675184}}]\label{thm:isogeniescomposite}
 Let $E/\Q$ be a non-CM elliptic curve. Then $E$ admits a rational $n$-isogeny if and only if $n \in \{1,2,3,4,5,6,7,8,9,10,11,12,13,15,16,17,18,21,25,37\}$.
\end{theorem}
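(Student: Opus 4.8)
The plan is to prove both implications; the reverse one is a finite verification, and the forward one carries the content. For the reverse direction, it is enough to produce, for each $n$ in the listed set, a single non-CM elliptic curve over $\Q$ admitting a rational cyclic $n$-isogeny. Each such $n$ has $X_0(n)$ of genus $0$ (for $n \in \{1,\dots,10,12,13,16,18,25\}$), genus $1$ (for $n \in \{11,15,17,21\}$), or genus $2$ (for $n=37$), so the required curves are obtained by parametrizing the rational $X_0(n)$ or reading off known examples of small conductor, and one checks in each case that the resulting $j$-invariant is not one of the thirteen CM values.

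For the forward direction I would first reduce to prime powers. If $E$ admits a rational $n$-isogeny with cyclic kernel $C$ of order $n=\prod_i p_i^{e_i}$, then each $p_i$-primary component $C[p_i^{e_i}]$ is a cyclic $\Gal(\overline{\Q}/\Q)$-stable subgroup of order $p_i^{e_i}$, so $E$ admits a rational $p_i^{e_i}$-isogeny; conversely, rational $m$- and $m'$-isogenies with $\gcd(m,m')=1$ compose to a rational $mm'$-isogeny. It remains to (i) find the admissible prime powers and (ii) decide which coprime products of them occur simultaneously. By Mazur's theorem, the primes involved lie in $\{2,3,5,7,11,13,17,37\}$. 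The higher prime powers are handled by studying $X_0(p^e)$ directly: the cases $p^e\in\{4,8,9,16,25\}$ give genus-$0$ curves carrying one-parameter families of non-CM points; $X_0(27)$, $X_0(32)$, and $X_0(49)$ are genus-$1$ curves with finite Mordell--Weil group whose non-cuspidal rational points all turn out to have CM; and for $p^e\in\{64,81,121,169,\dots\}$ the curve $X_0(p^e)$ has genus $\ge 2$ with no non-cuspidal rational points, which one establishes via Faltings together with an explicit search, or by dominating an already-classified $X_0(d)$. This yields the admissible prime powers $\{2,3,4,5,7,8,9,11,13,16,17,25,37\}$.

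For step (ii), a rational $n$-isogeny corresponds to a non-cuspidal rational point on $X_0(n)$ whose image on each $X_0(p^e)$ with $p^e\parallel n$ is non-CM. Running through the coprime products of admissible prime powers, the survivors are exactly $\{6,10,12,15,18,21\}$, for which $X_0(n)$ has genus $0$ or $1$ and the relevant points are exhibited; everything else is excluded. The exclusions fall into two types. If $n$ is divisible by some prime $p\in\{11,13,17,37\}$, then the non-CM $j$-invariants admitting a rational $p$-isogeny form an explicit finite list, and one checks directly that none of them admits a further rational isogeny --- this rules out levels such as $22,26,33,34,55$ and, crucially, bounds $n$. If instead $n$ is divisible only by primes in $\{2,3,5,7\}$, one invokes the genus of $X_0(n)$ together with its known rational points (for the relevant levels beyond $18$ and $21$, the non-cuspidal rational points are CM or absent) to eliminate the level. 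Finiteness of the whole analysis is guaranteed because $X_0(n)$ has genus $0$ for only the explicit finite list of $n$ above, and for all other $n$ has finitely many rational points to inspect.

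The main obstacle is at the boundary of the list: rigorously determining the rational points on the genus-$\ge 1$ modular curves $X_0(n)$ lying just past the admissible levels --- $X_0(27)$, $X_0(32)$, $X_0(49)$, $X_0(64)$ among the prime powers and composite levels such as $X_0(14)$, $X_0(20)$, $X_0(24)$, $X_0(28)$ among the products --- where no rational parametrization is available and one must instead compute the Mordell--Weil rank of $J_0(n)$, run Chabauty--Coleman or Eisenstein-quotient arguments, or reduce modulo several primes. A secondary, more bureaucratic difficulty is proving that the exclusion list is complete, i.e., that no sporadic large composite level survives; making this airtight requires combining Mazur's prime-degree bound with the genus and Mordell--Weil data of every relevant $X_0(n)$ into a gap-free argument.
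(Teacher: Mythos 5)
The paper does not prove this statement; it is quoted verbatim from Kenku's 1982 paper (which builds on Mazur, Ogg, and Kenku's earlier work), so there is no in-paper argument to compare against. Evaluating your sketch on its own merits: the overall strategy is the right one and mirrors the historical approach (reduce to admissible prime powers via the primary decomposition of the cyclic kernel, then decide which coprime products of admissible prime powers co-occur by analyzing rational points on $X_0(n)$), and you are candid that the real content lies in the rational-point computations on the relevant $X_0(n)$, which your sketch leaves undone. That part is an honest acknowledgment rather than a flaw.

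However, there is a concrete error in your exclusion step. You group $13$ with $\{11,17,37\}$ and assert that ``the non-CM $j$-invariants admitting a rational $p$-isogeny form an explicit finite list'' for $p\in\{11,13,17,37\}$, using this to rule out levels like $26$. This is false for $p=13$: the curve $X_0(13)$ has genus $0$, so there is a one-parameter family of non-CM $j$-invariants admitting a rational $13$-isogeny, and one cannot ``check directly that none of them admits a further rational isogeny.'' The prime $13$ must be treated like $\{2,3,5,7\}$ --- composite levels such as $26$, $39$, $52$, $65$, $91$, $117$ (and the prime power $169$) each require a direct determination of the rational points on $X_0(n)$, typically by computing the Mordell--Weil group of $J_0(n)$ or a suitable quotient. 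As written, your argument leaves all of these levels unexcluded, which is a genuine gap separate from the ones you acknowledge.
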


In general, for an elliptic curve $E/F$ and a positive integer $n$, one has that $E$ admits an $F$-rational $n$-isogeny if and only if $G_E(n)$ is conjugate to a subgroup of the Borel subgroup:
\[B_0(n)
\coloneqq\left\lbrace \begin{bmatrix}
a&b\\
0&d
\end{bmatrix}\in \GL_2(\Z/n\Z) : a,d \in (\Z/n\Z)^{\times}, b \in  \Z/n\Z \right\rbrace. \]

For a prime $p$, there is a $p$-adic variant of $B_0(n)$ which will prove useful. Let us define the group
\[
B_0(p^\infty)\coloneqq\left\{
\begin{bmatrix}
    \alpha & \beta \\ \gamma & \delta
\end{bmatrix}\in \GL_2(\Z_p) :\alpha,\delta \in \Z_p^\times, \beta \in \Z_p, \text{ and } \gamma \in p \Z_p
\right\}.
\]
Observe that $B_0(p^\infty)$ is the full inverse image of the Borel subgroup $B_0(p)$ under the reduction map $\GL_2(\Z_p)\rightarrow \GL_2(\Z/p\Z)$. From this description and our above discussion, one obtains the following.

\begin{lemma}
    Let $p$ be a prime number and $E/F$ be an elliptic curve. Then $E$ admits a rational $p$-isogeny if and only if $G_E(p^\infty)$ is conjugate to a subgroup of $B_0(p^\infty)$.
\end{lemma}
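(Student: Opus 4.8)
The plan is to derive the lemma directly from the two facts already collected in this subsection, so that almost no new work is needed. First I would invoke, with $n = p$, the general equivalence established above: an elliptic curve $E/\Q$ admits a rational $p$-isogeny if and only if $G_E(p)$ is conjugate in $\GL_2(\Z/p\Z)$ to a subgroup of the Borel subgroup $B_0(p)$. Next I would record two simple compatibilities: that $B_0(p^\infty) = \pi_p^{-1}(B_0(p))$ for the reduction map $\pi_p \colon \GL_2(\Z_p) \to \GL_2(\Z/p\Z)$ (noted just before the statement), and that $G_E(p) = \pi_p(G_E(p^\infty))$, since $\rho_{E,p}$ is obtained from $\rho_{E,p^\infty}$ by reducing the $p$-adic Tate module modulo $p$. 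Granting these, the lemma reduces to the purely group-theoretic claim that $G_E(p^\infty)$ is conjugate in $\GL_2(\Z_p)$ to a subgroup of $\pi_p^{-1}(B_0(p))$ if and only if $\pi_p(G_E(p^\infty))$ is conjugate in $\GL_2(\Z/p\Z)$ to a subgroup of $B_0(p)$.

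To prove that claim I would argue both directions explicitly. For the forward direction, if $g \in \GL_2(\Z_p)$ conjugates $G_E(p^\infty)$ into $\pi_p^{-1}(B_0(p))$, then applying the homomorphism $\pi_p$ shows that $\pi_p(g)$ conjugates $\pi_p(G_E(p^\infty))$ into $\pi_p\bigl(\pi_p^{-1}(B_0(p))\bigr) = B_0(p)$, where the equality uses that $\pi_p$ is surjective. For the reverse direction, given $\bar g \in \GL_2(\Z/p\Z)$ conjugating $\pi_p(G_E(p^\infty))$ into $B_0(p)$, I would use surjectivity of the reduction map $\GL_2(\Z_p) \to \GL_2(\Z/p\Z)$ to choose a lift $g \in \GL_2(\Z_p)$ of $\bar g$; then $g\, G_E(p^\infty)\, g^{-1}$ has image $\bar g\, \pi_p(G_E(p^\infty))\, \bar g^{-1} \subseteq B_0(p)$ under $\pi_p$, hence $g\, G_E(p^\infty)\, g^{-1} \subseteq \pi_p^{-1}(B_0(p)) = B_0(p^\infty)$, as required. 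Combining with the two recalled facts completes the argument.

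There is no substantial obstacle here: the statement is a formal consequence of the preceding discussion together with the observation that $B_0(p^\infty)$ has level $p$. The only point that demands any care is transporting a conjugating element between $\GL_2(\Z/p\Z)$ and $\GL_2(\Z_p)$, which is handled by surjectivity of reduction; if one preferred, one could instead run the argument through the finite quotients $\GL_2(\Z/p^k\Z)$ and the Borel subgroups $B_0(p^k)$, but reducing a single step modulo $p$ already suffices.
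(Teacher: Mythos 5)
Your proposal is correct and matches the paper's intended argument: the paper leaves the lemma without a written proof, simply noting that $B_0(p^\infty) = \pi_p^{-1}(B_0(p))$ and citing the earlier discussion that a rational $n$-isogeny is equivalent to $G_E(n)$ being conjugate into $B_0(n)$. You have filled in exactly the implicit group-theoretic step, correctly using surjectivity of $\GL_2(\Z_p) \to \GL_2(\Z/p\Z)$ to transport the conjugating element in both directions.
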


We now state and prove a lemma relating the mod $n$ Galois images for isogenous elliptic curves (cf.\ \cite[Corollary 2.21]{MR4868206}). 

\begin{lemma}\label{lemma:isomorphicimages}
   Let $E_1/F$ and $E_2/F$ be elliptic curves such that for some prime $p$ there exists an $F$-rational $p$-isogeny $\phi\colon E_1\rightarrow E_2$. Then for all positive integers $n$ coprime to $p$, the groups $G_{E_1}(n)$ and $G_{E_2}(n)$ are conjugate.
\end{lemma}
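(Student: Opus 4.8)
The plan is to show that an $F$-rational $p$-isogeny restricts to a $\Gal(\overline{F}/F)$-equivariant \emph{isomorphism} on $n$-torsion whenever $n$ is coprime to $p$, so that the two mod $n$ representations are equivalent and hence have conjugate images.

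First I would recall that, since the kernel $C$ of $\phi$ is stable under $\Gal(\overline{F}/F)$, the isogeny $\phi\colon E_1\to E_1/C=E_2$ is defined over $F$; in particular $\phi(\sigma(P))=\sigma(\phi(P))$ for every $\sigma\in\Gal(\overline{F}/F)$ and $P\in E_1(\overline{F})$. Being a group homomorphism, $\phi$ carries $E_1[n]$ into $E_2[n]$, so it induces a Galois-equivariant homomorphism $\phi|_{E_1[n]}\colon E_1[n]\to E_2[n]$.

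Next I would compute its kernel: $\ker(\phi|_{E_1[n]})=C\cap E_1[n]$. As $|C|=p$ and $\gcd(n,p)=1$, no nonzero element of $C$ lies in $E_1[n]$, so $\phi|_{E_1[n]}$ is injective. Both $E_1[n]$ and $E_2[n]$ are free $\Z/n\Z$-modules of rank $2$ (the base field has characteristic $0$), hence have the same cardinality $n^2$, so $\phi|_{E_1[n]}$ is in fact a bijection, and therefore an isomorphism of $\Z/n\Z$-modules that commutes with the Galois action.

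Finally I would transport bases: if $\{P,Q\}$ is a $\Z/n\Z$-basis of $E_1[n]$, then $\{\phi(P),\phi(Q)\}$ is one of $E_2[n]$, and with respect to these choices $\rho_{E_1,n}(\sigma)$ and $\rho_{E_2,n}(\sigma)$ are represented by the same matrix for every $\sigma\in\Gal(\overline{F}/F)$. Since the mod $n$ image is only well-defined up to conjugacy (via change of basis), it follows that $G_{E_1}(n)$ and $G_{E_2}(n)$ are conjugate in $\GL_2(\Z/n\Z)$. There is no serious obstacle here; the only points requiring care are that the isogeny is genuinely defined over $F$ (which is exactly the rationality hypothesis) and that coprimality of $n$ and $p$ upgrades the torsion restriction from a mere homomorphism to an isomorphism.
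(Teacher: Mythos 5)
Your proof is correct and follows essentially the same approach as the paper: restrict $\phi$ to $n$-torsion, use coprimality to get injectivity hence an isomorphism, note Galois-equivariance from the $F$-rationality hypothesis, and transport a basis to identify the two representations up to conjugation.
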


\begin{proof}
    Let $n$ be coprime to $p$. The isogeny $\phi\colon E_1\rightarrow E_2$ restricts to a homomorphism 
    \[ \phi_n \colon E_1[n] \longrightarrow E_2[n]\]
    whose kernel is contained in the kernel of $\phi$. Since $n$ is coprime to $p$, we have $E_1[n]\cap \ker\phi= \{\mathcal{O}_{E_1}\}$, so $\phi_n$ is injective. As $|E_1[n]|=|E_2[n]|=n^2$, it follows that $\phi_n$ is an isomorphism. Because $\phi$ is $F$-rational, we conclude that $\phi_n$ is in fact a $\Gal(\overline{F}/F)$-module isomorphism. 
   
   Next let $\{P,Q\}$ be a basis for $E_1[n]$. Then $\{\phi(P), \phi(Q)\}$ is a basis for $E_2[n]$. Since $\phi_n$ is a $\Gal(\overline{F}/F)$-module isomorphism, we have that for any $\sigma \in \Gal(\overline{F}/F)$,
    \begin{align*}
        \sigma(P) &= aP+cQ \\
        \sigma(Q) &= bP+dQ
\end{align*} if and only if \begin{align*}
        \sigma(\phi(P))&=a\phi(P)+c\phi(Q) \\
        \sigma(\phi(Q))&=b\phi(P)+d\phi(Q).
    \end{align*} 
    In other words,
    \( \rho_{E_1,n}(\sigma) = \rho_{E_2,n}(\sigma) \)
    where $\rho_{E_1,n}$ is with respect to the basis $\{P,Q\}$ and $\rho_{E_2,n}$ is with respect to the basis $\{\phi(P),\phi(Q)\}$. In particular, $G_{E_1}(n)$ is conjugate to $G_{E_2}(n)$.
\end{proof}

\subsection{Twists}
In this subsection, we collect several standard facts about how Galois representations behave under quadratic twisting. We will use these results later to simplify our computations. Although we state these results over $\Q$, some of them hold over general number fields as well.

First, we recall that for non-CM elliptic curves over $\Q$, the adelic index depends only on the $j$-invariant, which characterizes the $\overline{\Q}$-isomorphism class of the curve. 

\begin{lemma}[Corollary 2.3, \cite{zywina2022possibleindicesgaloisimage}]\label{lem:jinv}
    Let $E/\Q$ be a non-CM elliptic curve. Then the adelic index of $E$ depends only on the $j$-invariant of $E$.
\end{lemma}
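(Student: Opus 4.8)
The plan is to reduce the statement to a comparison of the adelic images of a quadratic twist pair, and then to track a single Boolean invariant — whether $-I$ lies in the adelic image — through the twist. Let $E/\Q$ and $E'/\Q$ be non-CM elliptic curves with $j(E) = j(E')$. Since a non-CM elliptic curve has $j$-invariant different from $0$ and $1728$, its geometric automorphism group is $\{\pm 1\}$, so $E'$ is a quadratic twist $E^{(d)}$ of $E$ for some $d \in \Q^\times$. If $d$ is a square then $E \cong E'$ over $\Q$ and there is nothing to prove, so assume $\Q(\sqrt d) \neq \Q$ and let $\chi_d\colon \Gal(\overline\Q/\Q)\to\{\pm 1\}$ be the quadratic character cutting out $\Q(\sqrt d)$. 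The standard behavior of Galois representations under quadratic twisting gives $\rho_{E'}(\sigma) = \chi_d(\sigma)\,\rho_E(\sigma)$ for all $\sigma$, where the sign $\chi_d(\sigma)\in\{\pm 1\}$ is read inside $\GL_2(\widehat{\Z})$ as $\pm I$; this can be checked level by level from the classical mod $n$ statement. In particular $G_{E'} \subseteq \langle G_E, -I\rangle$, and symmetrically $G_E \subseteq \langle G_{E'}, -I\rangle$.

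Next I would set $\widetilde{G} = \langle G_E, -I\rangle$ and note from the previous inclusions that $\widetilde{G} = \langle G_{E'}, -I\rangle$. Since $-I$ is central in $\GL_2(\widehat{\Z})$, the groups $G_E$ and $G_{E'}$ are normal in $\widetilde{G}$ with $[\widetilde{G}:G_E],\,[\widetilde{G}:G_{E'}]\in\{1,2\}$, each index being $1$ exactly when $-I$ lies in the corresponding image. From the tower of indices $[\GL_2(\widehat{\Z}):G_E] = [\GL_2(\widehat{\Z}):\widetilde{G}]\cdot[\widetilde{G}:G_E]$ (and similarly for $E'$), and the fact that $[\GL_2(\widehat{\Z}):\widetilde{G}]$ is the same for both curves, the lemma reduces to the single claim that $-I \in G_E$ if and only if $-I \in G_{E'}$.

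The key step — and essentially the only place where the arithmetic of $\Q$ is used — is to prove this equivalence. Suppose $-I \in G_E$, say $\rho_E(\sigma_0) = -I$. By the Weil pairing, $\det\circ\rho_E = \chi$ is the cyclotomic character, so $\chi(\sigma_0) = \det(-I) = 1$, i.e.\ $\sigma_0$ fixes every root of unity. By the Kronecker--Weber theorem, $\Q(\sqrt d)$ is contained in a cyclotomic field, so $\sigma_0$ also fixes $\sqrt d$, giving $\chi_d(\sigma_0) = 1$. Then $\rho_{E'}(\sigma_0) = \chi_d(\sigma_0)\rho_E(\sigma_0) = -I$, so $-I \in G_{E'}$; the converse is the same argument with $E$ and $E'$ interchanged. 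This would complete the proof. The only real content lies in this last step: recognizing that membership of $-I$ is the sole obstruction to twist-invariance of the index, and that it is forced by combining the Weil-pairing identity $\det\rho_E = \chi$ with Kronecker--Weber. The remaining steps — passage to a quadratic twist, the index tower, and the twisting formula — are routine bookkeeping.
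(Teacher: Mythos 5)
Your proof is correct. Since the paper simply cites Zywina's Corollary 2.3 without reproducing an argument, what is worth comparing is your route against Zywina's. Zywina's argument (and the machinery this paper actually invokes later, e.g.\ equation \eqref{SL2indx} via \cite[Lemma 1.7]{ZywinaOpen}) rests on the identity $[\GL_2(\widehat{\Z}):G_E]=[\SL_2(\widehat{\Z}):G_E']$ together with the observation that quadratic twisting multiplies $\rho_E$ by a central character and therefore leaves the commutator subgroup $G_E'$ unchanged; twist-invariance of the index is then immediate. Your approach avoids the $[\GL_2:\,G_E]=[\SL_2:\,G_E']$ formula entirely: you introduce $\widetilde{G}=\langle G_E,-I\rangle=\langle G_{E'},-I\rangle$, reduce the question to whether $-I\in G_E$ is a twist invariant, and settle that by combining $\det\circ\rho_E=\chi$ (from the Weil pairing) with Kronecker--Weber, which forces $\chi_d$ to be trivial on any $\sigma_0$ with $\rho_E(\sigma_0)=-I$. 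This is more elementary and self-contained; the commutator route is shorter once the index formula is already available (as it is in this paper), but your argument does not presuppose it. All the intermediate steps you flag as routine — reduction to a quadratic twist pair using $j\ne 0,1728$, the adelic twisting formula $\rho_{E'}=\chi_d\cdot\rho_E$ with compatible bases, normality of $G_E$ and $G_{E'}$ in $\widetilde{G}$ and the index tower — do check out.
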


It is worth noting that if $E$ and $E'$ are non-CM elliptic curves over $\Q$ with the same $j$-invariant, then $E$ and $E'$ are isomorphic over some quadratic field; in other words, they are quadratic twists of one another. Next, we recall that the existence of a rational $n$-isogeny is invariant under quadratic twisting.

\begin{lemma}[Proposition 3.5, \cite{MR3704357}]\label{lem:twistisog}
    Let $E/\Q$ be an elliptic curve that admits a rational $n$-isogeny for some positive integer $n$. Then every quadratic twist $E'/\Q$ of $E$  also admits a rational $n$-isogeny.
\end{lemma}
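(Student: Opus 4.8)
The plan is to reduce the claim to the Borel-subgroup characterization of rational $n$-isogenies recalled above. Write $E'$ for the quadratic twist of $E$ by a quadratic character $\epsilon\colon\Gal(\overline{\Q}/\Q)\to\{\pm1\}$, say the one corresponding to a quadratic field $\Q(\sqrt d)$. The key input I would invoke is the standard description of how the mod $n$ representation transforms under such a twist: there is a $\Q(\sqrt d)$-rational isomorphism $\psi\colon E\to E'$ with $\psi^\sigma=[\epsilon(\sigma)]\circ\psi$ for all $\sigma$, so that transporting a basis of $E[n]$ to $E'[n]$ along $\psi$ gives $\rho_{E',n}(\sigma)=\epsilon(\sigma)\,\rho_{E,n}(\sigma)$ for all $\sigma$, where $\epsilon(\sigma)\in\{\pm1\}$ is read as the scalar matrix $\pm I\in\GL_2(\Z/n\Z)$. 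Equivalently, one can argue directly at the level of torsion: if $C$ is the cyclic order-$n$ subgroup of $E[n]$ that is the kernel of the given rational isogeny, then $\psi(C)\subseteq E'[n]$ is again cyclic of order $n$, and for $R\in C$ one computes $\sigma(\psi(R))=\psi^\sigma(\sigma(R))=\pm\psi(\sigma(R))\in\psi(C)$, so that $\psi(C)$ is $\Gal(\overline{\Q}/\Q)$-stable and therefore induces a rational $n$-isogeny out of $E'$.

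Granting this input, the argument is immediate. Since $E$ admits a rational $n$-isogeny, the characterization recalled above lets me conjugate so that $G_E(n)\subseteq B_0(n)$. The scalar matrix $-I$ is upper triangular with units on the diagonal, hence $-I\in B_0(n)$ and $\{\pm I\}\cdot B_0(n)=B_0(n)$. Applying the same conjugation to the twist formula then yields $G_{E'}(n)=\{\,\epsilon(\sigma)\,\rho_{E,n}(\sigma)\;:\;\sigma\in\Gal(\overline{\Q}/\Q)\,\}\subseteq\{\pm I\}\cdot B_0(n)=B_0(n)$, and one final appeal to the characterization shows that $E'$ admits a rational $n$-isogeny.

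This is essentially a formal lemma, so I do not anticipate a genuine obstacle; the only point that deserves a moment of care is isolating the correct shape of the twist relation $\rho_{E',n}=\epsilon\otimes\rho_{E,n}$ --- in particular, that $\epsilon$ is $\{\pm1\}$-valued and hence lands in the center of $\GL_2(\Z/n\Z)$, which is exactly what makes membership in $B_0(n)$ insensitive to the twist. The value of the lemma is organizational: it will let us later replace any curve admitting a rational isogeny of a prescribed degree by a convenient quadratic twist without forfeiting the isogeny hypothesis.
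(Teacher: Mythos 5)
Your proposal is correct. Note, however, that the paper does not prove this lemma itself — it is cited as Proposition~3.5 of \cite{MR3704357} — so there is no in-paper proof to compare against. Your argument is the standard one and is essentially self-contained: the twist relation $\rho_{E',n} = \chi_d\cdot\rho_{E,n}$ (which the paper itself records a few lines later in the same subsection) shows $G_{E'}(n)\subseteq\{\pm I\}\cdot G_E(n)$, and since $-I\in B_0(n)$, containment in the Borel subgroup is insensitive to twisting; equivalently, and somewhat more transparently, your direct computation $\sigma(\psi(R)) = \psi^\sigma(\sigma(R)) = \pm\psi(\sigma(R))$ shows that $\psi$ carries a Galois-stable cyclic subgroup of $E[n]$ to a Galois-stable cyclic subgroup of $E'[n]$. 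Both formulations are correct. The only minor point worth spelling out in a final write-up is that $\psi(C)$ is closed under $\pm 1$ because it is a subgroup, so the sign ambiguity introduced by $\epsilon(\sigma)$ indeed keeps you inside $\psi(C)$.
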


Lastly, we record how the mod $n$ Galois representations of an elliptic curve and its quadratic twists are related. If $E^d$ denotes the quadratic twist of $E$ by $d$, then up to an appropriate choice of bases,
\[
\rho_{E^d,n}(\sigma)=(\chi_d\cdot \rho_{E,n})(\sigma),
\]
 where $\chi_d \colon \Gal(\overline{\Q} / \Q) \to \{\pm 1\}$ is the quadratic character corresponding to $\Q(\sqrt{d})$. An important consequence of this relation is the following lemma, which implies that for any integer $n > 2$ and any elliptic curve $E/\Q$, one can find a quadratic twist $E'$ of $E$ such that $-I\in G_{E'}(n)$. 

\begin{lemma}[Corollary 5.25(b), \cite{MR3482279}]\label{lem:-Itwist}
    Let $E/\Q$ be an elliptic curve and $n>2$ be an integer such that $-I\notin G_E(n)$. Let $E'/\Q$ be a quadratic twist of $E$. Then $G_{E'}(n)$ is conjugate to either $G\coloneqq\langle G_E(n),-I\rangle$ or an index $2$ subgroup of $G$ that does not contain $-I$. The latter occurs precisely when the quadratic extension corresponding to the twist is a subfield of the splitting field of the $n$-division polynomial of $E$. In particular, there always exists a twist $E' / \Q$ such that  $-I \in G_{E'}(n)$. 
\end{lemma}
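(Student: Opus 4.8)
The plan is to build everything on the twisting relation $\rho_{E^d,n}(\sigma)=\chi_d(\sigma)\cdot\rho_{E,n}(\sigma)$ recorded just before the statement. Write $E'=E^d$ for the twist by a squarefree $d$ (the case $d$ a square being trivial, as then $E'\cong E$ and $G_E(n)$ is visibly an index-$2$ subgroup of $G$ avoiding $-I$), set $K=\Q(\sqrt d)$, and let $L=\Q(E[n])$ be the $n$-division field, so that $\rho_{E,n}$ induces an isomorphism $\Gal(L/\Q)\cong G_E(n)$. Since $\chi_d$ takes values in $\{\pm 1\}$, the relation above gives $G_{E'}(n)\subseteq G\coloneqq\langle G_E(n),-I\rangle$ at once (after the appropriate choice of bases, which is why the conclusion is stated only up to conjugacy). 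The argument then dichotomizes on whether or not $K\subseteq L$, and I claim these two cases produce precisely the two alternatives in the statement.

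First I would treat $K\not\subseteq L$. Then $\chi_d$ is nontrivial on $\Gal(\overline{\Q}/L)$, so some $\sigma_0$ fixing $L$ has $\chi_d(\sigma_0)=-1$; as $\rho_{E,n}(\sigma_0)=I$, we get $-I=\chi_d(\sigma_0)\rho_{E,n}(\sigma_0)\in G_{E'}(n)$. For arbitrary $g\in G_E(n)$, lift to $\tau$ with $\rho_{E,n}(\tau)=g$ and replace $\tau$ by $\tau\sigma_0$ if $\chi_d(\tau)=-1$ to see $g\in G_{E'}(n)$; hence $G_E(n)\subseteq G_{E'}(n)$ and therefore $G_{E'}(n)=G$. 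This case also yields the final assertion: $L/\Q$ is finite, hence contains only finitely many quadratic subfields, so for all but finitely many squarefree $d$ we have $\Q(\sqrt d)\not\subseteq L$, and any such twist satisfies $-I\in G_{E'}(n)$.

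Second, suppose $K\subseteq L$. Then $\chi_d$ factors as $\varepsilon\circ\rho_{E,n}$ for a homomorphism $\varepsilon\colon G_E(n)\to\{\pm1\}$, so $G_{E'}(n)=\{\varepsilon(g)g:g\in G_E(n)\}$. I would check, using that $\pm I$ is central, that $g\mapsto\varepsilon(g)g$ is a homomorphism, and that it is injective because $-I\notin G_E(n)$ (if $\varepsilon(g)g=\varepsilon(g')g'$ with $\varepsilon(g)\neq\varepsilon(g')$ then $g=-g'$, forcing $-I\in G_E(n)$). Thus $|G_{E'}(n)|=|G_E(n)|=\tfrac12|G|$, so $G_{E'}(n)$ has index $2$ in $G$; and $-I\notin G_{E'}(n)$, since $\varepsilon(g)g=-I$ would force $g\in\{\pm I\}$, each of which contradicts either $-I\notin G_E(n)$ or $\varepsilon$ being a homomorphism. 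So $G_{E'}(n)$ is an index-$2$ subgroup of $G$ not containing $-I$, the other alternative.

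It remains to identify the criterion ``$K\subseteq L$'' with the one in the statement. Here I would use that, for $n>2$, a matrix in $\GL_2(\Z/n\Z)$ fixes the $x$-coordinate of every point of $E[n]$ if and only if it equals $\pm I$; consequently the splitting field of the $n$-division polynomial of $E$ (the field generated by those $x$-coordinates) is the subfield of $L$ fixed by $\{\pm I\}\cap G_E(n)$, which equals $L$ precisely because $-I\notin G_E(n)$. Hence $K$ lies in that splitting field exactly when $K\subseteq L$, matching the two cases above. The only place needing genuine care is this last identification (together with the attendant bookkeeping about division polynomials when $n$ is even); the two main cases are routine manipulations with the central subgroup $\{\pm I\}$, so I expect no serious obstacle.
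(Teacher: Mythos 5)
The paper does not actually prove this lemma: it is quoted verbatim as Corollary 5.25(b) of Sutherland \cite{MR3482279}, with no argument supplied, so there is no ``paper's proof'' against which to compare the steps of your write-up. Reading your proposal as a self-contained derivation, it is correct and in fact reconstructs essentially the argument Sutherland gives. The twisting relation $\rho_{E^d,n}=\chi_d\cdot\rho_{E,n}$ immediately places $G_{E^d}(n)$ inside $G=\{\pm I\}\cdot G_E(n)$; the dichotomy on whether $\Q(\sqrt d)\subseteq\Q(E[n])$ correctly produces the two alternatives; and the verification that the twisted image is an injective homomorphic image of $G_E(n)$ (hence index $2$ in $G$ and $-I$-free) uses nothing more than centrality of $\pm I$ and the hypothesis $-I\notin G_E(n)$, exactly as it should. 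Your identification of $\Q(E[n])$ with the splitting field of the $n$-division polynomial under the hypothesis $-I\notin G_E(n)$ is also the right mechanism: for $n>2$ an element of $\GL_2(\Z/n\Z)$ acting as $\pm1$ on every cyclic subgroup must be a scalar in $\{\pm I\}$ (your check using a basis $e_1,e_2$ and $e_1+e_2$ is the standard one), so the $x$-coordinate field is the fixed field of $\{\pm I\}\cap G_E(n)=\{I\}$, i.e.\ all of $\Q(E[n])$. The one point you flag but do not carry out---that for even $n$ the $n$-division polynomial generates the same field as the $x$-coordinates of all nonzero $n$-torsion---is a genuine but routine bookkeeping matter in the conventions for $\psi_n$ when $n$ is even; it is the only place where a reader would want the detail filled in, and it does not affect the structure of the argument. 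I see no gap of substance.
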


\subsection{Level Results}\label{s:levelresults}

When carrying out computations involving adelic images, it is necessary to work at a sufficiently large finite level. In this subsection, we recall two results that help determine an appropriate level for our computations.

Given an integer $N=\prod_{\ell}\ell^{e_\ell}$, define an integer $N_\ell$ for each prime $\ell$ dividing $N$ as follows:
\[
N_\ell \coloneqq \ell^{e_\ell}\cdot \prod_{{\substack{p \mid N \\ p^2\equiv 1 \bmod{\ell}}}} p.
\]
With this notation in place, we have the following proposition.

\begin{proposition}[Lemma 7.2, 7.4, {\cite{ZywinaOpen}}] \label{L:Zyw-Lemma7.2}
Fix an integer $N=\prod_\ell \ell^{e_\ell}>1$ with $e_2 \neq 1$. 
\begin{itemize}
\item Let $G$ be a closed subgroup of $\GL_2(\Z_N)$.   For each prime $\ell\mid N$, assume that the image of $G$ in $\GL_2(\Z/N_\ell\ell\Z)$ contains all matrices that are congruent to $I$ modulo $N_\ell$. Then $G$ is an open subgroup of $\GL_2(\Z_N)$ with level dividing $N$. 
\item Let $G$ be a closed subgroup of $\SL_2(\Z_N)$.   For each prime $\ell\mid N$, assume that the image of $G$ in $\SL_2(\Z/N_\ell\ell\Z)$ has level dividing $N_\ell$. Then $G$ is an open subgroup of $\SL_2(\Z_N)$ with level dividing $N$.
\end{itemize}
\end{proposition}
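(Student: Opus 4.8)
The plan is to show that $G$ contains the kernel of reduction modulo $N$ from the ambient group ($\GL_2(\Z_N)$ in the first part, $\SL_2(\Z_N)$ in the second) onto its mod-$N$ quotient: this is precisely the assertion that $G$ has level dividing $N$, and since that kernel is open it also yields that $G$ is open. Writing $\Z_N=\prod_{\ell\mid N}\Z_\ell$, this kernel is the internal direct product of the pro-$\ell$ congruence subgroups $\Gamma_\ell\coloneqq\ker\bigl(\GL_2(\Z_\ell)\to\GL_2(\Z/\ell^{e_\ell}\Z)\bigr)$, each embedded in $\GL_2(\Z_N)$ as trivial in the remaining components; since these copies have disjoint support, I would reduce to proving $\Gamma_\ell\subseteq G$ for each $\ell\mid N$ one prime at a time. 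Fix such an $\ell$ and set $k\coloneqq e_\ell\ge 1$; note the hypothesis $e_2\neq 1$ forces $k\ge 2$ when $\ell=2$.

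The engine will be a Frattini (Burnside basis) argument for pro-$\ell$ groups. For $\ell$ odd and $k\ge 1$, or $\ell=2$ and $k\ge 2$, the group $\Gamma_\ell(\ell^k)\coloneqq\ker\bigl(\GL_2(\Z_\ell)\to\GL_2(\Z/\ell^k\Z)\bigr)$ is topologically finitely generated and pro-$\ell$, and both its $\ell$-power map and its commutator map carry it into $\Gamma_\ell(\ell^{k+1})$, with $g\mapsto g^\ell$ already surjecting onto $\Gamma_\ell(\ell^{k+1})/\Gamma_\ell(\ell^{k+2})$; hence its Frattini subgroup is exactly $\Gamma_\ell(\ell^{k+1})$ and its Frattini quotient is $\Gamma_\ell(\ell^k)/\Gamma_\ell(\ell^{k+1})\cong M_2(\F_\ell)$ via $I+\ell^k A\mapsto A\bmod\ell$. (This is where $\ell=2$ with $e_2=1$ must be barred: for $\Gamma_2(2)$ one has $(I+2A)^2\equiv I+4(A+A^2)\pmod 8$, whose reduction modulo $\Gamma_2(8)$ lands only in the trace-zero matrices, so the Frattini quotient of $\Gamma_2(2)$ is strictly larger than $M_2(\F_2)$ and the argument below fails.) By the Burnside basis theorem, a closed subgroup of $\Gamma_\ell(\ell^k)$ that surjects onto $M_2(\F_\ell)$ must equal $\Gamma_\ell(\ell^k)$, so it will suffice to show that the closed subgroup $G\cap\Gamma_\ell(\ell^k)$ --- the intersection with the embedded copy --- surjects onto $M_2(\F_\ell)$.

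To produce the needed elements I would argue as follows. Fix $v\in M_2(\F_\ell)$. The matrix in $\GL_2(\Z/N_\ell\ell\Z)$ that is $\equiv I+\ell^k v\pmod{\ell^{k+1}}$ and $\equiv I$ in every other prime-power part of $N_\ell\ell$ is congruent to $I$ modulo $N_\ell$, so by hypothesis it lies in the image of $G$ modulo $N_\ell\ell$; here I use that $N_\ell\ell$ has $\ell$-part $\ell^{k+1}$ and that its remaining prime factors are exactly the primes $p\in S_\ell\coloneqq\{p\mid N:p^2\equiv 1\bmod\ell\}$, each to the first power. Pick $g\in G$ mapping to this matrix: then $g_\ell\equiv I+\ell^k v\pmod{\ell^{k+1}}$ (so in particular $g_\ell\in\Gamma_\ell(\ell^k)$), $g_p$ lies in the pro-$p$ group $\ker\bigl(\GL_2(\Z_p)\to\GL_2(\F_p)\bigr)$ for each $p\in S_\ell$, and $g_p$ is unconstrained for the other primes $p\mid N$. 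Now let $t\in\widehat{\Z}$ be the element with $\ell$-component $1$ and $q$-component $0$ for every prime $q\neq\ell$, and set $h\coloneqq g^t\in\overline{\langle g\rangle}\subseteq G$. Working componentwise through the procyclic closures $\overline{\langle g_p\rangle}$, one gets $h_\ell=g_\ell$ (since $g_\ell$ generates a pro-$\ell$ group) while $h_p=I$ for every $p\mid N$ with $p\neq\ell$: for $p\in S_\ell$ because $g_p$ generates a pro-$p$ group, which has no $\ell$-torsion; and for $p\notin S_\ell$ because $\ell\nmid\lvert\GL_2(\F_p)\rvert=p(p-1)^2(p+1)$ forces $\GL_2(\Z_p)$ to contain no element of order $\ell$. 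Hence $h\in G\cap\Gamma_\ell(\ell^k)$ reduces to $v$ in $M_2(\F_\ell)$; as $v$ was arbitrary this shows $G\cap\Gamma_\ell(\ell^k)=\Gamma_\ell(\ell^k)$, finishing the first part. For the second part I would run the verbatim $\SL_2$-analogue: replace $\Gamma_\ell(\ell^k)$ by $\ker\bigl(\SL_2(\Z_\ell)\to\SL_2(\Z/\ell^k\Z)\bigr)$, whose Frattini quotient is the trace-zero matrices $\mathfrak{sl}_2(\F_\ell)$; note the hypothesis ``level dividing $N_\ell$'' is exactly the $\SL_2$-translation of ``contains every matrix $\equiv I\bmod N_\ell$''; and observe that the relevant primes are again $\{p\mid N:p^2\equiv 1\bmod\ell\}$ since $\ell\mid\lvert\SL_2(\F_p)\rvert=p(p^2-1)$ under the same condition.

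I expect the main obstacle to be getting the pro-$\ell$ group theory exactly right and interlocking it with the entanglement. Concretely, the delicate points are: identifying the Frattini quotient of $\Gamma_\ell(\ell^k)$ as $M_2(\F_\ell)$ (resp.\ $\mathfrak{sl}_2(\F_\ell)$) and isolating why $\ell=2$ with $e_2=1$ is a genuine exception; and the bookkeeping for the profinite power $g\mapsto g^t$, where the crucial realization is that the set $S_\ell$ of primes $p$ with $p^2\equiv 1\bmod\ell$ is forced on us --- these are exactly the primes for which $\GL_2(\Z_p)$ (resp.\ $\SL_2(\Z_p)$) carries $\ell$-torsion, so the hypothesis must constrain $g$ modulo $p$ at those primes in order for the power to annihilate that component, whereas at every other prime dividing $N$ there is nothing to annihilate.
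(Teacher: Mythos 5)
The paper states this result and cites it from \cite{ZywinaOpen} (Lemmas 7.2 and 7.4) without reproducing the proof, so there is no in-paper argument to compare against; I can only assess your proof on its own merits, and it is correct. Your approach is the expected one: a Frattini/Burnside argument on the congruence subgroups $\Gamma_\ell(\ell^{e_\ell})$, whose Frattini quotient is $M_2(\F_\ell)$ (resp.\ $\mathfrak{sl}_2(\F_\ell)$) provided $e_2\neq 1$, combined with the profinite-power trick $g\mapsto g^t$ to isolate the $\ell$-component; your identification of $S_\ell=\{p\mid N: p^2\equiv 1\bmod\ell\}$ as exactly the primes for which $\GL_2(\Z_p)$ (resp.\ $\SL_2(\Z_p)$) carries $\ell$-torsion is the right way to see why the factor $\prod_{p\in S_\ell}p$ appears in the definition of $N_\ell$. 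One small step you should make explicit: for $p\mid N$ with $p\neq\ell$ and $p\notin S_\ell$, passing from ``$\GL_2(\Z_p)$ has no element of order $\ell$'' to ``$g_p^t=I$'' requires first observing that the pro-$\ell$ Sylow factor of the procyclic group $\overline{\langle g_p\rangle}$ is necessarily \emph{finite} (because $\GL_2(\Z_p)$ has an open pro-$p$ subgroup and $\ell\neq p$), so that its triviality really does follow from the absence of order-$\ell$ elements. Your explanation of why $e_2=1$ must be excluded, via the trace-zero obstruction to identifying the Frattini quotient of $\Gamma_2(2)$ with $M_2(\F_2)$, is also correct.
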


This proposition yields the following corollary about the level of maximal open subgroups.

\begin{corollary}[Lemma 7.3, {\cite{ZywinaOpen}}] \label{L:Zyw-Lemma7.3}
Fix an integer $N=\prod_\ell \ell^{e_\ell}>1$ with $e_2\neq 1$.   Let $G$ be an open subgroup of $\GL_2(\Z_N)$ whose level divides $N$.  If $M$ is a maximal open subgroup of $G$, then the level of $M$ divides $N \ell$ for some prime divisor $\ell$ of $N$.
\end{corollary}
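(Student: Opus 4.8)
The plan is to translate the statement into the language of congruence subgroups and then run a Frattini‑subgroup argument. Using $\Z_N\cong\prod_{\ell\mid N}\Z_\ell$, write $\Gamma(m)\coloneqq\ker\!\big(\GL_2(\Z_N)\to\GL_2(\Z/m\Z)\big)$ for any positive integer $m$ whose prime factors divide $N$; then a subgroup $H\le \GL_2(\Z_N)$ has level dividing $m$ if and only if $\Gamma(m)\subseteq H$. Since the level of $G$ divides $N$, we have $\Gamma(N)\subseteq G$. Set $R\coloneqq\rad(N)$. The goal becomes: produce a prime $\ell\mid N$ with $\Gamma(N\ell)\subseteq M$. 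If $\Gamma(N)\subseteq M$ we are already done (the level of $M$ then divides $N$), so assume $\Gamma(N)\not\subseteq M$; since $\Gamma(N)$ is normal in $\GL_2(\Z_N)$, the subgroup $M\Gamma(N)$ of $G$ strictly contains $M$ and hence equals $G$ by maximality.

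The heart of the argument is the claim that $\Gamma(NR)\subseteq M$. The key input is the identity $\Phi\big(\Gamma(N)\big)=\Gamma(NR)$, where $\Phi$ denotes the Frattini subgroup: decomposing $\Gamma(N)=\prod_{\ell\mid N}\Gamma_\ell(\ell^{e_\ell})$ as a direct product of pro-$\ell$ groups over the distinct primes $\ell\mid N$ (with $\Gamma_\ell(\ell^{e_\ell})=\ker(\GL_2(\Z_\ell)\to\GL_2(\Z/\ell^{e_\ell}\Z))$), pronilpotence gives $\Phi(\Gamma(N))=\prod_{\ell\mid N}\Phi\big(\Gamma_\ell(\ell^{e_\ell})\big)$, and each $\Gamma_\ell(\ell^{e_\ell})$ is a uniform pro-$\ell$ group, so $\Phi\big(\Gamma_\ell(\ell^{e_\ell})\big)=\Gamma_\ell(\ell^{e_\ell})^{\ell}=\Gamma_\ell(\ell^{e_\ell+1})$, whose product over $\ell\mid N$ is exactly $\Gamma(NR)$. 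Granting this, suppose $\Gamma(NR)\not\subseteq M$. Since $\Gamma(NR)$ is normal in $\GL_2(\Z_N)$, maximality of $M$ forces $M\Gamma(NR)=G$; intersecting with $\Gamma(N)$ and invoking the modular law (legitimate because $\Gamma(NR)\subseteq\Gamma(N)$) gives
\[
\Gamma(N)=M\Gamma(NR)\cap\Gamma(N)=\big(M\cap\Gamma(N)\big)\,\Gamma(NR)=\big(M\cap\Gamma(N)\big)\,\Phi\big(\Gamma(N)\big).
\]
By the non-generator property of the Frattini subgroup of the profinite group $\Gamma(N)$ — note $M\cap\Gamma(N)$ is a closed subgroup — this forces $M\cap\Gamma(N)=\Gamma(N)$, i.e.\ $\Gamma(N)\subseteq M$, contradicting our standing assumption. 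Hence $\Gamma(NR)\subseteq M$.

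It remains to sharpen ``level dividing $NR$'' to ``level dividing $N\ell$'' for a single prime $\ell$. Pass to the finite quotient $\overline G\coloneqq G/\Gamma(NR)$ and put $\overline M\coloneqq M/\Gamma(NR)$ and $\overline\Gamma\coloneqq\Gamma(N)/\Gamma(NR)$; the latter is a normal abelian subgroup of $\overline G$, isomorphic as a group to $\prod_{\ell\mid N}(M_2(\F_\ell),+)$, and $\overline G=\overline M\,\overline\Gamma$ with $\overline M$ maximal in $\overline G$ and $\overline\Gamma\not\subseteq\overline M$. Let $W\coloneqq\overline M\cap\overline\Gamma$. Because $\overline\Gamma$ is abelian and normal, $W$ is normal in $\overline G$, and in the resulting internal semidirect product $\overline G/W\cong(\overline M/W)\ltimes(\overline\Gamma/W)$ the maximality of $\overline M/W$ forces $\overline\Gamma/W$ to be a minimal normal subgroup of $\overline G/W$: a normal subgroup strictly between $1$ and $\overline\Gamma/W$ would, together with $\overline M/W$, produce a subgroup strictly between $\overline M/W$ and $\overline G/W$ (again by the modular law). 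A nontrivial abelian minimal normal subgroup is an elementary abelian $p$-group for a single prime $p$, which must divide $N$; and since $\overline\Gamma\cong\prod_{\ell\mid N}M_2(\F_\ell)$, the quotient $\overline\Gamma/W$ being a $p$-group forces $W$ to contain the $M_2(\F_\ell)$-factor for every $\ell\neq p$. Pulling this back through $\Gamma(N)\twoheadrightarrow\overline\Gamma$ and using $\Gamma(NR)\subseteq M$, we obtain $\Gamma_p(p^{e_p+1})\times\prod_{\ell\neq p}\Gamma_\ell(\ell^{e_\ell})=\Gamma(Np)\subseteq M$, so the level of $M$ divides $Np$, as required.

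The main (indeed the only nonformal) obstacle is the identity $\Phi(\Gamma(N))=\Gamma(NR)$, equivalently the uniform powerfulness of the congruence subgroups $\Gamma_\ell(\ell^{e_\ell})\subseteq\GL_2(\Z_\ell)$. This is a standard fact about $\ell$-adic analytic groups, but it is exactly where the hypothesis $e_2\neq1$ cannot be dropped: for $\ell=2$ with $e_2=1$ the group $\Gamma_2(2)$ is not powerful and $\Phi(\Gamma_2(2))$ is a proper subgroup of $\Gamma_2(4)$, so the argument — and the conclusion — can fail. This is also the ingredient underlying Proposition \ref{L:Zyw-Lemma7.2}, so the corollary may alternatively be deduced from it; the remainder of the proof is elementary group theory together with the dictionary between levels and congruence subgroups.
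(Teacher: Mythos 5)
Your argument is correct, and since the paper merely cites Zywina's Lemma~7.3 rather than reproducing a proof, there is no in-paper proof to compare against; what you give is a clean, self-contained reconstruction. The two-stage structure is exactly right: first a Frattini non-generator argument in the profinite group $\Gamma(N)$, using $\Phi(\Gamma(N))=\Gamma(NR)$ (which is where uniformity of $\Gamma_\ell(\ell^{e_\ell})$ enters and the hypothesis $e_2\neq 1$ is genuinely used), to force $\Gamma(NR)\subseteq M$; then a chief-factor analysis in the finite quotient $G/\Gamma(NR)$, where $\Gamma(N)/\Gamma(NR)\cong\prod_{\ell\mid N}M_2(\F_\ell)$ is abelian normal, to show that the intersection $M\cap\Gamma(N)$ fails to be all of $\Gamma(N)$ in at most one $\ell$-primary direction. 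A few small points worth making explicit for the reader: the modular law applications are justified because $\Gamma(NR)\subseteq\Gamma(N)$ and, in the quotient step, because $A\subseteq\overline\Gamma/W$; the normality of $W=\overline M\cap\overline\Gamma$ in $\overline G$ uses both that $\overline\Gamma$ is abelian normal and that $\overline G=\overline M\,\overline\Gamma$; and for $\ell=2$ the equality $\Phi(K)=K^2$ for $K=\Gamma_2(2^{e_2})$ with $e_2\geq 2$ uses that powerfulness at $p=2$ means $[K,K]\subseteq K^4\subseteq K^2$. You are also right to flag that the conclusion genuinely fails for $e_2=1$, since $\Gamma_2(2)$ is not powerful and $\Phi(\Gamma_2(2))\subsetneq\Gamma_2(4)$; this is the same obstruction that appears in Proposition~\ref{L:Zyw-Lemma7.2}.
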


\subsection{Modular Curves}\label{sec:modcurveintro} In this subsection, we discuss some background material on modular curves. Methods for computing rational points will be discussed in Section \ref{s:rationalpoints}.

Let $G$ be an open subgroup of $\GL_2(\widehat{\Z})$ that contains $-I$ and has full determinant, i.e., $\det(G)=\widehat{\Z}^{\times}$. Let $N$ denote the level of $G$. By working modulo $N$, we may regard $G$ as a subgroup of $\GL_2(\Z/N\Z)$ and obtain the associated modular curve $X_G$. 
We will use Zywina's code \cite{ZywinaModular} to compute models of modular curves.

We recall the following characterization of noncuspidal points. This result is well known (see \cite[Proposition 3.3]{zywina2015possibleimagesmodell}, for instance).

\begin{proposition}\label{prop:modular curves}
   Let $G \subseteq \GL_2(\widehat{\Z})$ be an open subgroup containing $-I$ and for which $\det G = \widehat{\Z}^\times$. Let $E$ be an elliptic curve defined over $\Q$ such that the $j$-invariant of $E$ is neither $0$ nor $1728$. Then $G_E$ is conjugate to a subgroup of $G$ if and only if the $j$-invariant of $E$ lies in the image of $X_G(\Q)$ under the $j$-map.
\end{proposition}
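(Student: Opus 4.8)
The plan is to prove both implications at once, via a chain of equivalences that passes through the moduli interpretation of $X_G$. First I would fix an auxiliary elliptic curve $E_0/\Q$ with $j(E_0) = j(E)$, which exists because $j(E) \in \Q$; since $j(E) \notin \{0,1728\}$, the curves $E$ and $E_0$ are quadratic twists of one another.

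The first step is to reduce the statement to $E_0$. For any $d \in \Q^\times$ and any $n$, one has $\rho_{E_0^d,n} = \chi_d \cdot \rho_{E_0,n}$ for a suitable choice of bases, so adelically $G_{E_0^d} \subseteq \langle -I, G_{E_0}\rangle$, and symmetrically with the roles of $E_0$ and $E_0^d$ interchanged. Since $-I \in G$, it follows that $G_{E_0}$ is conjugate to a subgroup of $G$ if and only if $G_{E_0^d}$ is, for every $d$; as $E$ is a quadratic twist of $E_0$, the same holds with $E_0^d$ replaced by $E$. So it suffices to show that $G_{E_0}$ is conjugate to a subgroup of $G$ if and only if $j(E_0) \in \pi_G(X_G(\Q))$.

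The second step is the moduli dictionary. I would identify $T(E_0)$ with $\widehat\Z^2$ so that $\rho_{E_0}$ has image $G_{E_0} \subseteq \GL_2(\widehat\Z)$. Since $\det G = \widehat\Z^\times$, the curve $X_G$ is geometrically connected and $\pi_G$ has degree $[\GL_2(\widehat\Z):G]$; since $j(E_0) \notin \{0,1728\}$, every elliptic curve with that $j$-invariant has automorphism group exactly $\{\pm I\}$. Combined with $-I \in G$, this identifies the coarse-space fiber $\pi_G^{-1}(j(E_0)) \subseteq X_G(\overline\Q)$, $\Gal(\overline\Q/\Q)$-equivariantly, with the finite coset space $G\backslash\GL_2(\widehat\Z)$: the coset $Gg$ corresponds to $E_0$ equipped with the $G$-level structure determined by $g$, and $\sigma \in \Gal(\overline\Q/\Q)$ acts by $Gg \mapsto Gg\,\rho_{E_0}(\sigma)$. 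Hence $Gg$ gives a point of $X_G(\Q)$ if and only if $g\,G_{E_0}\,g^{-1} \subseteq G$, so there is a $\Q$-rational point of $X_G$ above $j(E_0)$ if and only if $G_{E_0}$ is conjugate to a subgroup of $G$. Because $j(E_0) = j(E) \neq \infty$, such a point is automatically noncuspidal, and the condition ``there is a $\Q$-point above $j(E_0)$'' is exactly $j(E_0) \in \pi_G(X_G(\Q))$. This moduli-theoretic input is standard, and I would cite it, e.g.\ \cite[Ch.~4, \S 3]{MR337993} and \cite[Proposition 3.3]{zywina2015possibleimagesmodell}, rather than redevelop it.

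I expect the second step to be the main obstacle: one must know that a $\Q$-point of $X_G$ lying above a noncuspidal $j$-invariant other than $0$ and $1728$ genuinely comes from an elliptic curve over $\Q$ carrying a $G$-level structure, i.e., that passing from the moduli stack to its coarse space introduces no obstruction on $\Q$-points. This is precisely where the two hypotheses enter: $-I \in G$ absorbs the $\{\pm I\}$ of automorphisms (so that the quadratic twists of a single curve over $\Q$ exhaust the rational fiber), and $\det G = \widehat\Z^\times$ guarantees geometric connectedness so that $\pi_G$ is the expected degree map. Since these facts are well documented, in the write-up I would record the dictionary precisely with references and keep the moduli formalism to a minimum.
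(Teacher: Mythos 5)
Your argument is correct and takes essentially the same route as the paper, which gives no proof of its own but records the statement as well known with a citation to \cite[Proposition 3.3]{zywina2015possibleimagesmodell}; you unpack that citation via the standard moduli dictionary for $X_G$ and, at the crucial step, defer to the same references. One minor redundancy: the auxiliary curve $E_0$ is superfluous, since $E$ is already defined over $\Q$ with $j(E) \in \Q$, so one may simply take $E_0 = E$ and omit the twist-reduction step.
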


When $G$ is a Borel subgroup $B_0(N)$ of $\GL_2(\Z/N\Z)$, we denote $X_G$ by $X_0(N)$. By Proposition \ref{prop:modular curves} and our discussion in Section \ref{Sec:Isog}, the rational noncuspidal points of $X_0(N)$ correspond to elliptic curves defined over $\Q$ (up to $\overline{\Q}$ isomorphism) that admit a rational $N$-isogeny. We write $X_{\mathrm{sp}}(N)$ and $X_{\mathrm{ns}}(N)$ to denote the modular curves associated with the split and nonsplit Cartan subgroups, respectively, and write $X_{\mathrm{sp}}^+(N)$ and $X_{\mathrm{ns}}^+(N)$ for the modular curves associated with their normalizers. Finally, for $N$ prime, we write $X_{S_4}(N)$ for the modular curve associated with the inverse image in $\GL_2(\Z/N\Z)$ of the unique (up to conjugacy) subgroup of $\PGL_2(\Z/N\Z)$ that is isomorphic to $S_4$. We will often also denote these modular curves by their LMFDB labels, as discussed in the introduction. With some abuse of notation, we will refer to $G$ and $X_G$ interchangeably when it is clear from the context. 

Many of our computations with modular curves are motivated by a need to determine whether there exists a non-CM elliptic curve over $\Q$ whose adelic Galois image is contained in the intersection of two open subgroups of $\GL_2(\widehat{\Z})$ that contain $-I$ and have full determinant. To this end, we define the fiber product of two modular curves. If $G_1$ and $G_2$ are two open subgroups of $\GL_2(\widehat{\Z})$, of levels $N_1$ and $N_2$, respectively, that both contain $-I$ and have full determinant, then the \emph{fiber product} of the modular curves $X_{G_1}$ and $X_{G_2}$ is the fiber product of these curves as stacks over $X_{\GL_2(\widehat{\Z})}$. If $N_1$ and $N_2$ are coprime, then the fiber product of $X_{G_1}$ and $X_{G_2}$ is simply the modular curve $X_{G_1\cap G_2}$.

\section{Rational Point Computations}\label{s:rationalpoints}

The proof of Theorem \ref{t:mainthm} requires computing the rational points of many modular curves. These computations appear throughout Sections \ref{s:possibleimages}, \ref{s:11and17and37}, \ref{s:5and7and13}, and \ref{s:2and3}. They are sometimes straightforward (e.g., when the curve is an elliptic curve of rank $0$ or when there is a local obstruction), but often  rely on more advanced techniques. In this section, we describe the methods that we will use repeatedly.

\subsection{Map to a Rank 0 Elliptic Curve} \label{S:M2EC}
Sometimes a modular curve $X$ admits a nonconstant rational map to an elliptic curve $E$ defined over $\Q$ of rank 0. In this case, we use the code of the third-named author and Rouse \cite{JacobJeremycode, maylerouse} to find such a map and compute the rational points of $X$. We first compute a model of \(X\) and Fourier expansions of weight \(2\) cusp forms using Zywina's code \cite{ZywinaModular}.  We attempt to fix a rational base point \(Q \in X(\mathbb{Q})\)  (if none is found, we can attempt to proceed using the cusp at infinity, which need not be rational).

We identify a weight \(2\) cusp form \(f\) whose Hecke eigenvalues for primes \(p \nmid N\) agree with the corresponding Fourier coefficients \(a_p(E)\). We then compute periods of \(f\) and (up to a scalar) match the resulting period lattice with the period lattice \(\Lambda(E')\) of an optimal elliptic curve \(E'\) in the isogeny class of \(E\). We consider the natural identification
\[
\alpha\colon X(\mathbb{C})\longrightarrow\mathbb{H}/(G\cap\SL_2(\mathbb{Z})),
\]
where \(G\) is the subgroup associated to the modular curve \(X\). Using the chosen base point, we obtain an analytic map \(X \to \mathbb{C}/\Lambda(E')\) given by
\[
P\longmapsto \int_{\alpha(Q)}^{\alpha(P)} f \, ds,
\]
and composing with the natural isomorphism \(\mathbb{C}/\Lambda(E') \simeq E'(\mathbb{C})\) gives a map \(X \to E'\) defined over a cyclotomic field. When a rational base point \(Q \in X(\mathbb{Q})\) is available, we translate so that \(Q\) maps to the origin and then recover explicit defining equations for a map \(X \to E'\) defined over \(\mathbb{Q}\).

Although the construction uses numerical and heuristic steps, we then rigorously verify that the resulting polynomials indeed define a nonconstant map \(X \to E'\) as claimed. Once the map is verified, we determine \(X(\mathbb{Q})\) by pulling back each rational point \(T \in E'(\mathbb{Q})\) and determining the rational points on the resulting zero dimensional fibers. When possible, the method described in Step 13 of \cite{maylerouse} (based on \(p\)-adic lifting together with point counting bounds) is used to do this efficiently. Otherwise, we use Magma's built-in commands for determining the rational points of the zero dimensional fibers, which work well when the genus of \(X\) is low but are slow in higher genus.

\subsection{Sieving with Respect to a Projection}\label{subsection:sieving} This method is based on Section 8.3 of \cite{MR4468989}. For some modular curves $X$, we can find an automorphism $i \colon X \to X$ such that the Jacobian $J_X$ of $X$ and the Jacobian of the quotient curve have the same rank.  When this occurs, there exists an abelian variety $V$ of rank $0$ such that $J_X$ is the product of $V$ and the Jacobian of the quotient curve. For any point $P \in X(\Q)$, the point $P-i(P)$ lies in $J_X(\Q)_{\tors}$. Hence, it suffices to compute the preimages of rational points in $J_X(\Q)_{\tors}$ under the map $a \colon X(\Q) \to J_X(\Q)_{\tors}$ given by $P \mapsto P-i(P)$, which is injective away from the fixed points of $i$. 

In practice, it is difficult to determine $J_X(\Q)_{\tors}$. However, by the appendix of Katz's article \cite{MR604840}, if $p$ is an odd prime of good reduction, then $J_X(\Q)_{\tors}$ injects into $J_X(\F_p)$. We can use this to obtain an upper bound on the order of $J_X(\Q)_{\tors}$. After this, we either find a point $P \in X(\Q)$ such that $P-i(P)$ has a given order or we find a prime $p$ and show that, modulo $p$, there is no divisor of the form $P-Q$ of that order for any $P,Q \in X(\F_p)$.

\subsection{The Method of Chabauty and Coleman}

This is a method for determining the rational points on a curve $X$ when the rank of its Jacobian is less than its genus. When this assumption holds, one can define a finite subset of the Jacobian of $X$ over $\Q_p$ that contains the set $X(\Q)$. This method is originally due to Chabauty \cite{chabauty1941points} and was made effective by Coleman \cite{coleman1985effective}. We will apply this to several genus 2 curves. There are two built-in commands in \texttt{Magma} that implement this method.

The first function is \texttt{Chabauty0}. Given the Jacobian of a hyperelliptic curve over $\Q$ that is of rank 0, the function finds all of the rational points on the curve. The second function is \texttt{Chabauty}. Given a point $P$ on the Jacobian of a hyperelliptic curve over $\Q$ of genus $2$, this method employs Chabauty's method in conjunction with the Mordell--Weil sieve to return a subset of the rational points of the curve. If the Jacobian has rank 1 over $\Q$, the full list of rational points is returned. In general, the function returns the set of points whose images in the Jacobian lie in the saturation of the group generated by $P$. For more information about the Mordell--Weil sieve, see the subsequent subsection.

\subsection{Quadratic Chabauty and the Mordell--Weil Sieve}\label{s:quadraticchabauty}
We apply quadratic Chabauty along with the Mordell--Weil sieve to determine the rational points of certain modular curves where the rank of the Jacobian is equal to the genus of the curve. Analogously to the creation of a subset of $J(\Q_p)$ in the case of classical Chabauty--Coleman, quadratic Chabauty uses $p$-adic heights to construct a finite subset of $p$-adic points that contains the set of rational points. For more details, see \cite{MR2512779} and \cite{MR3843370}. Recent work  \cite{bianchi2022rational, finnerty2025quadraticchabautyexperimentsgenus} has specifically addressed the case of bielliptic modular curves of genus $2$ and rank 2, which arises on a few occasions in this paper. 

The Mordell--Weil sieve was originally used in Scharaschkin's Ph.D.\ thesis \cite{scharaschkinthesis} to prove that a curve has no rational points, but it can be used more generally to determine the set of rational points. In particular, it can eliminate $p$-adic points from the output of classical or quadratic Chabauty by proving that they do not correspond to rational points.

Each bielliptic curve of genus $2$ over $\Q$ has a model of the form 
\begin{equation*}
y^2=a_6x^6+a_4x^4+a_2x^2+a_0,\hspace{1cm}a_i\in\Z.
\end{equation*}
Using these models, we perform quadratic Chabauty for the three smallest primes $p_1,p_2,p_3$ of good ordinary reduction. In each case, we obtain a set of $p$-adic points that we wish to prove do not lift to rational points. We denote these sets by $A_1, A_2,$ and $A_3$, respectively. 

Let $\iota$ be the Abel--Jacobi embedding of $X(\Q)$ into its Jacobian $J(\Q)$ with respect to a known base point $b$:
\begin{equation*}
    \iota: X(\Q)\hookrightarrow J(\Q),\hspace{1cm} P\mapsto[P-b].
\end{equation*}
Suppose, toward a contradiction, that a $p$-adic point $P_i\in A_i$ corresponds to a rational point $P\in X(\Q)$. Since the Jacobian of $X$ has rank 2, we can find a torsion point $T \in J(\Q)$ and integers $a_1,a_2$ such that
\begin{equation*}
    \iota(P)=a_1B_1+a_2B_2+T,
\end{equation*}
where $B_1$ and $B_2$ represent the generators of the free part of the Jacobian. 

We know points in $A_i$ only modulo $p_i^{m_i}$ for some integer $m_i$ up to a chosen precision, and we can compute $a_1$ and $a_2$ modulo $p_i^{n_i}$ for some $n_i\leq m_i$. This means that for every point in $A_i$, there are at most $|J(\Q)_{\tors}|$ possibilities for the image of $\iota(P)$ in $J(\Q)/p_i^{n_i}J(\Q)$. Our goal in the Mordell--Weil sieve is to show that these cosets in $J(\Q)/p_i^{n_i} J(\Q)$ cannot arise from any rational point by considering reduction maps modulo multiple primes. We follow the approach of Bianchi and Padurariu \cite{bianchi2022rational}.

Let $I$ be a subset of $\{1,2,3\}$ and $M=\prod_{j\in I} p_j^{n_j}$. By the Chinese remainder theorem, we can enlarge $M$ to $M=M'\prod_{j\in I}p_j^{n_j}$ for any $M'$ coprime to each $p_j$. Note that $M'=1$ always satisfies this condition, but one may need to try different values in order to successfully sieve. Let $S$ be a finite set of primes of good reduction for $X$. The following diagram commutes, where $\alpha_S$ and $\beta_S$ are the natural maps:

\[\begin{tikzcd}
	{X(\Q)} & {J(\Q)/MJ(\Q)} \\
	{\prod_{\ell\in S}X(\F_\ell)} & {\prod_{\ell\in S}J(\F_\ell)/MJ(\F_\ell).}
	\arrow["{\pi\circ\iota}", from=1-1, to=1-2]
	\arrow[from=1-1, to=2-1]
	\arrow["{\alpha_S}", shift left=3, from=1-2, to=2-2]
	\arrow["{\beta_S}", from=2-1, to=2-2]
\end{tikzcd}\]

It suffices to find some set $S$ and some subset $C_M$ of the quotient $J(\Q)/MJ(\Q)$ such that
\[
    \alpha_S(C_M)\cap\beta_S(\prod_{\ell\in S}X(\F_\ell))=\emptyset.
\]

This may require trying both different eligible sets $S$ and auxiliary integers $M'$. We carry out this computation explicitly, with documentation available at \cite{finnertysievecode}. In each case to which we applied this method, the sieve eliminated all points that did not easily lift to rational points. Table \ref{ratpts} shows the resulting affine rational points on each of the curves used in this analysis.

\begin{table}[H]\label{ratpts}
\begin{tabular}{|l|l|l|}
\hline
\textbf{Section} &
\textbf{Affine Model of Curve} & \textbf{Rational Points} \\ \hline
\ref{s:rationalisogeniesdegree5} & 
$y^2 + y = -2x^6 + 9x^2 - 7$     &  $(\pm1 , 0),(\pm1,-1)$\\

\ref{s:rationalisogeniesdegree5}& $y^2 = -12x^5 - 18x^4 + 63x^3 - 18x^2 - 12x$ &  $(0,0)$ \\

\ref{subsec:2isoglattice} & $y^2 = 2x^6 + 2$ &  $(\pm1,\pm2)$ \\
\ref{subsec:2isoglattice}  & $y^2 = x^6 + 3x^4 +3x^2+9$ &  $(\pm1,\pm4),(0,\pm3)$ \\
\ref{subsec:3isoglattice}  & $y^2 = 9x^6 - 99x^4 + 27x^2 - 1$ &  $(\pm\frac{1}{3},\pm\frac{8}{9})$ \\ \hline

\end{tabular}
\caption{Rational points on certain bielliptic curves}
\label{tab:biellipticchabauty}
\end{table}

\section{Possible \texorpdfstring{$\ell$}{ell}-adic Images}\label{s:possibleimages}
\subsection{Strategy and Preliminary Results}\label{ss:strat}
Let $E/\Q$ be a non-CM elliptic curve. Suppose that $E$ admits a rational isogeny of degree $p$, where
\[
    p \in \{2,3,5,7,13\}.
\]
When $p \in \{2,3\}$, we impose the additional assumption that $E$ admits no rational isogeny of any prime degree larger than $p$, in order to reduce the number of cases to consider. The goal of this section is to determine, for each $p \in \{2,3,5,7,13\}$, all possible $\ell$-adic images $G_E(\ell^\infty)$ containing $-I$ that can occur (up to conjugacy). 

Lemos's theorem (Theorem \ref{Thm_Lemos}) gives that the $\ell$-adic representation of $E$ is surjective for all primes $\ell > 37$, so we restrict our attention to primes $\ell \leq 37$. By Theorem 1.1.6 and Table 1 of \cite{MR4468989}, for each prime $p\in\{2,3,5,7,13\}$, the possible $\ell$-adic images of $E$ (with $\ell\le 37$) arise as non-CM rational points on the fiber products $X_0(p)$ with the modular curves in the following table:

\begin{table}[H]
    \centering
    \begin{tabular}{|l|l|}
       \hline \textbf{$\ell$}  & \textbf{Labels} \\ \hline
       5  & $X_{S_4}(5)$ (\mc{5.5.0.a.1}), $X_{0}(5)$ (\mc{5.6.0.a.1}), $X_{\mathrm{ns}}^+(5)$ (\mc{5.10.0.a.1}) \\
       7 & $X_{0}(7)$ (\mc{7.8.0.a.1}), $X_{\mathrm{ns}}^+(7)$ (\mc{7.21.0.a.1}), $X_{\mathrm{sp}}^+(7)$ (\mc{7.28.0.a.1})\\
       11 & $X_{0}(11)$ (\mc{11.12.1.a.1}), $X_{\mathrm{ns}}^+(11)$ (\mc{11.55.1.b.1})\\
       13 & $X_{0}(13)$ (\mc{13.14.0.a.1}), $X_{S_4}(13)$ (\mc{13.91.3.a.1})\\
       17 & $X_{0}(17)$ (\mc{17.18.1.a.1}) \\
       19 & $X_{\mathrm{ns}}^+(19)$ (\mc{19.171.8.a.1})\\
       23 & $X_{\mathrm{ns}}^+(23)$ (\mc{23.253.13.a.1})\\
       29 & $X_{\mathrm{ns}}^+(29)$ (\mc{29.406.24.a.1})\\
       31 & $X_{\mathrm{ns}}^+(31)$ (\mc{31.465.28.a.1})\\
       37 & $X_{0}(37)$ (\mc{37.38.2.a.1}), $X_{\mathrm{ns}}^{+}(37)$ (\mc{37.666.43.a.1})\\ \hline
    \end{tabular}\caption{Curves to consider for $\ell$-adic images} \label{CurvesToConsider}
\end{table}

\begin{remark}
    Several of the modular curves in the table above have no known non-CM rational points. In fact, it is conjectured that for $\ell \geq 19$, the curve $X_{\mathrm{ns}}^+(\ell)$ has no non-CM rational points (see \cite[Conjecture 1.12]{zywina2015possibleimagesmodell}, for instance). However, to obtain an unconditional result, we must still consider fiber products with these modular curves.   
\end{remark}

We now employ some of the earlier results to simplify the analysis. For any positive integer $n$, if $E/\Q$ is a non-CM elliptic curve that admits a rational $n$-isogeny, then every quadratic twist $E'/\Q$ of $E$ also admits a rational $n$-isogeny by Lemma \ref{lem:twistisog}. By Lemma \ref{lem:jinv}, we have that \[ [\GL_2(\widehat{\Z}) : G_E] = [\GL_2(\widehat{\Z}) : G_{E'}].\] This means that for any positive integer $m$, we can always consider a twist $E'$ of $E$ with  $-I \in G_{E'}(m)$ by Lemma \ref{lem:-Itwist}. This greatly reduces the number of possible $\ell$-adic images to consider, since we need only consider those containing $-I$.
We also use the following result of Lemos \cite{MR3885140} throughout this section when considering  fiber products involving $X_0(p)$.

\begin{proposition}\label{P:j-integer}
    Let $p$ be a prime number for which $X_0(p)$ has genus $0$, i.e., $p \in \{2,3,5,7,13\}$.  Let $E/\Q$ be an elliptic curve that admits a rational $p$-isogeny. Suppose there exists a prime $\ell > 5$ not equal to $p$ such that the image of $\rho_{E,\ell} \colon \Gal(\overline{\Q}/\Q) \to \GL_2(\Z/\ell\Z)$ is contained in the normalizer of a nonsplit Cartan subgroup of $\GL_2(\Z/\ell\Z)$. Then $j(E)\in\Z$.
\end{proposition}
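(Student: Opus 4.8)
The plan is to argue by contradiction. Suppose $j(E)\notin\Z$; then $E$ has potentially multiplicative reduction at some prime $q$, i.e.\ $v_q(j(E))<0$. The key group-theoretic input is that the normalizer $N$ of a nonsplit Cartan subgroup $C$ of $\GL_2(\F_\ell)$ contains no nontrivial unipotent element: $C$ is cyclic (isomorphic to $\F_{\ell^2}^\times$), so its only element with sole eigenvalue $1$ is $I$, while any element of $N\setminus C$ squares into $\F_\ell^\times\cdot I$ and has trace $0$, hence for $\ell>2$ is semisimple with eigenvalues $\pm\lambda$. More generally, no element of $N$ is semisimple with two distinct $\F_\ell^\times$-rational eigenvalues and nonzero trace (such an element would be nonscalar with $\F_\ell$-rational eigenvalues, hence not in $C$, and of nonzero trace, hence not in $N\setminus C$). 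Thus $\rho_{E,\ell}(I_v)$ contains no nontrivial unipotent element for any place $v$.

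First I would use this to pin down the valuations of $j(E)$. For a prime $q\ne\ell$ with $v_q(j(E))<0$, the curve $E/\Q_q$ is a quadratic twist of the Tate curve with parameter $\mathfrak q$ satisfying $v_q(\mathfrak q)=-v_q(j(E))$, and the standard description of $E[\ell]$ for a Tate curve shows that $\rho_{E,\ell}(I_q)$ contains a nontrivial unipotent element exactly when $\ell\nmid v_q(\mathfrak q)$ (this persists through the twist because $\ell$ is odd). Hence $\ell\mid v_q(j(E))$. For $q=\ell$ I would rule out potentially multiplicative reduction entirely: in $E[\ell]$ the subgroup $\mu_\ell$ makes $\rho_{E,\ell}|_{I_\ell}$ contain, up to a possible ramified quadratic twist, a matrix $\left(\begin{smallmatrix}\chi_\ell(\sigma)&*\\0&1\end{smallmatrix}\right)$ with $\sigma$ in the index-$\le2$ subgroup of $I_\ell$ on which the twist is trivial and $\chi_\ell(\sigma)\ne\pm1$; such $\sigma$ exists precisely because $\ell>5$ (so $(\F_\ell^\times)^2\ne\{\pm1\}$), and the resulting element is semisimple with distinct eigenvalues $\chi_\ell(\sigma),1\in\F_\ell^\times$ and nonzero trace, contradicting the first paragraph. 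So $v_\ell(j(E))\ge0$; in particular our $q$ satisfies $q\ne\ell$, and altogether $\ell\mid v_r(j(E))$ for every prime $r$.

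Next I would invoke the rational $p$-isogeny together with the genus-$0$ structure of $X_0(p)$. Since $p-1\mid24$, the curve $X_0(p)$ has a Hauptmodul $t$ (for instance $t=(\eta(p\tau)/\eta(\tau))^{24/(p-1)}$) for which the $j$-line map takes the explicit form $j=P(t)/t^p$ with $P\in\Z[t]$ monic of degree $p+1$ and $P(0)=\pm p^{m_0}$ for some $m_0\ge1$, the Atkin--Lehner involution acting by $t\mapsto p^{12/(p-1)}/t$. Because $E$ admits a rational $p$-isogeny, $j(E)=P(t_0)/t_0^p$ for some $t_0\in\Q^\times$; writing $t_0=a/b$ in lowest terms and reading off valuations (using that $P$ is monic with $P(0)$ prime to each $q\ne p$), the divisibility from the previous step forces $\ell\mid v_q(b)$ for all $q$ and $\ell\mid v_q(a)$ for all $q\ne p$, confining $t_0$ to $\pm p^{\Z}\cdot(\Q^\times)^{\ell}$. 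The pure valuation data does not yet suffice, and at this stage one must feed back the finer arithmetic of the nonsplit Cartan hypothesis---most usefully that $\rho_{E,\ell}(\Frob_r)$ lies in $N$ at every prime $r\ne\ell$ of split or nonsplit multiplicative reduction, which forces $r\equiv\pm1\pmod\ell$ (equivalently $1-4r$ a nonsquare or $0$ mod $\ell$)---and combine this with the shape of $P$ and of $t_0$ to pin $t_0$ down to a finite list; one then checks that each surviving value of $t_0$ makes $v_p\!\left(P(t_0)/t_0^p\right)$ either nonnegative or a negative integer whose only prime factors are $\le 5$ or equal to $p$, contradicting $\ell\mid v_p(j(E))$ for $\ell>5$, $\ell\ne p$. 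This would force $j(E)\in\Z$.

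The main obstacle is this last step: upgrading ``every valuation of $j(E)$ is divisible by $\ell$'' to genuine integrality. Naive valuation bookkeeping is not enough on its own; one has to exploit the full arithmetic content of the nonsplit Cartan condition (the eigenvalue/Frobenius constraints at the bad primes, and the ramified-twist analysis at $\ell$) in tandem with the explicit rational function $j=P(t)/t^p$ on the genus-$0$ curve $X_0(p)$, following Lemos. A secondary delicate point is the quadratic-twist bookkeeping in the $q=\ell$ case, which is exactly where the hypothesis $\ell>5$ is used.
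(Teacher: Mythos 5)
The paper's proof is a two-sentence citation: Lemos \cite{MR3885140}, Section 3, shows $j(E)\in\Z[1/\ell]$ under the hypotheses (the rational $p$-isogeny, the nonsplit Cartan condition at $\ell$, and $\ell\neq p$), and Lemos's Proposition 2.2 shows $E$ has potentially good reduction at $\ell$, so $j(E)\in\Z[1/\ell]\cap\Z_\ell=\Z$. Your proposal tries to reconstruct both ingredients from scratch. Your first two paragraphs are a broadly correct reconstruction of Proposition 2.2 together with the unipotent constraint $\ell\mid v_q(j(E))$ at primes $q\ne\ell$ with $v_q(j(E))<0$: the group theory of the normalizer $N$ of a nonsplit Cartan (no nontrivial unipotent; no nonscalar semisimple element with distinct $\F_\ell$-rational eigenvalues and nonzero trace), the Tate-curve reading of inertia at $q\ne\ell$, and the treatment of $q=\ell$ (finding $\sigma\in I_\ell$ fixing the quadratic twist with $\chi_\ell(\sigma)\ne\pm1$, where $\ell>5$ is exactly what makes $(\F_\ell^\times)^2$ strictly larger than $\{\pm1\}$) are all sound in outline. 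One minor overstatement: you conclude ``$\ell\mid v_r(j(E))$ for every prime $r$,'' but the unipotent argument controls only primes $r$ at which $v_r(j(E))<0$; this is harmless for integrality, but it is not literally what you proved.

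The genuine gap, which you flag yourself, is the third paragraph---the reconstruction of Lemos's Section 3. Your valuation bookkeeping on $j=P(t)/t^p$ correctly confines the Hauptmodul value to $t_0\in\pm p^{\Z}\cdot(\Q^\times)^\ell$, but this is only a necessary condition and does not by itself force $j(E)\in\Z[1/\ell]$: a prime $q\ne p,\ell$ dividing the denominator of $t_0$ to a multiple of $\ell$ is still consistent with everything you have established, and it gives $v_q(j(E))<0$. You then gesture at the Frobenius congruence $r\equiv\pm1\pmod\ell$ at primes $r$ of multiplicative reduction and assert that ``one then checks that each surviving value of $t_0$'' yields a contradiction, but this is precisely the substance of Lemos's Section 3 argument and is not a routine check; as written your sketch does not produce a contradiction. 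The proposal thus has the right architecture and correctly reproduces roughly half of the cited argument (Proposition 2.2 and the local divisibility constraints), but the decisive step---turning the $\ell$-divisibility and congruence data into genuine integrality away from $\ell$---is not carried out.
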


\begin{proof}
    The argument in  \cite[Section 3]{MR3885140} shows that the $j$-invariant of $E$ belongs to $\Z[\frac{1}{\ell}]$ as long as $\ell \neq p$. This pair $(E,\ell)$ also satisfies the conditions of  \cite[Proposition 2.2]{MR3885140}, implying that $E$ has potentially good reduction at $\ell$. Hence $j(E)\in\Z[\frac{1}{\ell}] \cap \Z_\ell = \Z$, which completes the proof.
\end{proof}

We now prove two lemmas that will help us rule out non-CM rational points on fiber products with the modular curves $X_{\mathrm{ns}}^+(\ell)$ and $X_{S_4}(13)$. All of the computational claims made in this section can be verified in the file \texttt{Section 4.1.m} in the repository \cite{projectcode}.

\begin{lemma}\label{lemma:fiberproductwithXns+}
    For $p \in \{2,3,5,7,13\}$ and a prime $\ell > 5$ not equal to $p$, $X_0(p) \times X_{\mathrm{ns}}^{+}(\ell)$ has no non-CM rational points.
\end{lemma}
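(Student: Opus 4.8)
The plan is to argue by contradiction, leaning on Proposition~\ref{P:j-integer} to turn the statement into a finite check. Suppose $X_0(p)\times X_{\mathrm{ns}}^{+}(\ell)$ has a non-CM rational point. Since $p$ and $\ell$ are distinct primes, this fiber product is the modular curve attached to the intersection of a Borel subgroup at $p$ with a nonsplit Cartan normalizer at $\ell$; as the $j$-invariant of the point is then neither $0$ nor $1728$, Proposition~\ref{prop:modular curves} produces a non-CM elliptic curve $E/\Q$ that admits a rational $p$-isogeny and has $G_E(\ell)$ conjugate to a subgroup of the normalizer of a nonsplit Cartan subgroup of $\GL_2(\Z/\ell\Z)$. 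The first step is to apply Proposition~\ref{P:j-integer}: since $\ell>5$ and $\ell\neq p$, it gives $j(E)\in\Z$, so $E$ has potentially good reduction at every prime (as does the curve $p$-isogenous to it, potential good reduction being an isogeny invariant).

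The second step is to combine $j(E)\in\Z$ with the genus-zero model of $X_0(p)$. Writing $t$ for a Hauptmodul of $X_0(p)$ and $j=R_p(t)$ for the associated rational function in $\Q(t)$, an elementary analysis of $R_p(a/b)$ for $t=a/b$ in lowest terms — using that the only prime appearing in the numerator and denominator of $R_p$ is $p$ — shows that $j(E)\in\Z$ forces $t$ to lie in an explicit finite set; concretely one finds $t\in\Z$ and then that $t$ divides a fixed power of $p$. Hence $j(E)$ belongs to an explicit finite set $S_p\subseteq\Z$ depending only on $p$. Any element of $S_p$ equal to one of the thirteen rational CM $j$-invariants is discarded, leaving finitely many non-CM candidates.

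The third step eliminates those. By Lemos's theorem (Theorem~\ref{Thm_Lemos}), an elliptic curve over $\Q$ admitting a rational $p$-isogeny has surjective mod-$\ell'$ representation for every prime $\ell'>37$, so necessarily $\ell\in\{7,11,13,17,19,23,29,31,37\}$. For each such $\ell$ and each remaining non-CM value $j_0\in S_p$, fix any $E_0/\Q$ with $j(E_0)=j_0$; since $E$ is a quadratic twist of $E_0$ and the nonsplit Cartan normalizer contains $-I$, Lemma~\ref{lem:-Itwist} shows that $G_E(\ell)$ lies in a nonsplit Cartan normalizer if and only if $G_{E_0}(\ell)$ does. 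The latter is refuted by exhibiting a single prime $q$ of good reduction for $E_0$ with $a_q\coloneqq \tr\rho_{E_0,\ell}(\Frob_q)\not\equiv 0\pmod{\ell}$ and $a_q^2-4q$ a nonzero square modulo $\ell$: then $\rho_{E_0,\ell}(\Frob_q)$ has two distinct eigenvalues in $\F_\ell^\times$ and hence cannot lie in any nonsplit Cartan normalizer. These are finite, mechanical verifications (carried out in the file \texttt{Section 4.1.m} of the repository).

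The main obstacle is conceptual rather than computational: for $\ell\geq 17$ the curves $X_{\mathrm{ns}}^{+}(\ell)$, and hence the fiber products $X_0(p)\times X_{\mathrm{ns}}^{+}(\ell)$, have genus large enough that their rational points are not known, so one cannot analyze them directly. The crux is the realization that the integrality constraint of Proposition~\ref{P:j-integer}, together with the genus-zero structure of $X_0(p)$ and the uniformity bound of Theorem~\ref{Thm_Lemos}, collapses the problem to finitely many pairs $(j_0,\ell)$, each of which is settled by an elementary Frobenius computation that never references rational points on $X_{\mathrm{ns}}^{+}(\ell)$.
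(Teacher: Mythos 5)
Your proposal is correct and follows essentially the same outline as the paper: apply Proposition~\ref{P:j-integer} to force $j(E)\in\Z$, reduce to a finite list of integral $j$-invariants on $X_0(p)$, and rule out the nonsplit Cartan normalizer condition for each candidate. Two execution details differ. First, you sketch a fresh derivation of the finite list $S_p$ via an elementary divisibility analysis of the genus-zero $j$-map $R_p$ (the statement ``the only prime appearing\ldots is $p$'' is imprecise --- e.g.\ $250=2\cdot 5^3$ appears for $p=5$ --- but the relevant gcd computations do only involve $p$, so the conclusion stands); the paper instead simply cites Lemos's list from \cite[p.~7]{MR3885140}. Second, for the final verification you give a clean, self-contained Frobenius criterion (find $q$ with $a_q\not\equiv 0\pmod\ell$ and $a_q^2-4q$ a nonzero square mod $\ell$, forcing split distinct eigenvalues incompatible with $N(C_{\mathrm{ns}})$), whereas the paper simply checks that a representative curve has surjective $\ell$-adic representation, which is a stronger fact obtained via standard image-computing tools. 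Your criterion is more elementary and proves exactly what is needed; the paper's check is quicker to implement given existing software. Both are valid, and your identification of Theorem~\ref{Thm_Lemos} as the source of the uniform bound on $\ell$ and of Lemma~\ref{lem:-Itwist} as the twist-invariance step is exactly right.
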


\begin{proof}
    Let $p \in \{2,3,5,7,13\}$ and $\ell > 5$ be a prime distinct from $p$. From Proposition \ref{P:j-integer}, we know that if $X_0(p) \times X_{\mathrm{ns}}^{+}(\ell)$ has any non-CM rational points, then the corresponding $j$-invariant must be an integer. The set of integral $j$-invariants that occur for non-CM rational points of $X_0(p)$ is finite for each $p$ and listed in \cite[p.\ 7]{MR3885140}. We reproduce these $j$-invariants in the table below.
    
    {\small
    \begin{table}[H]
    \centering
    \begin{tabular}{|l|l|l|}
       \hline \textbf{$p$}  & \textbf{Integral $j$-invariants} & \textbf{Representative elliptic curves}\\ \hline
       2  &  $-2^2\cdot7^3 ,-2^4\cdot3^3 ,-2^6,2^7,$ & \ec{7688.c2}, \ec{1800.b2}, \ec{1568.a2}, \ec{128.a2}, \\
       & $2^4\cdot5^3 ,2^{11},2^2\cdot3^6 ,2^7\cdot3^3,$ & \ec{2312.b2}, \ec{200.b2}, \ec{8712.d2}, \ec{1152.d1},\\ 
       & $17^3 ,2^6\cdot5^3,2^5\cdot7^3,2^5\cdot3^6,$ & \ec{4225.e2}, \ec{256.a1}, \ec{128.a1}, \ec{1152.d2}, \\ 
       & $2^4\cdot17^3,2^3\cdot31^3,$ & \ec{200.b1}, \ec{1568.a1}, \\
       & $2^2\cdot3^6\cdot7^3, 2^2\cdot5^3\cdot13^3, 2\cdot127^3, 2\cdot3^3\cdot43^3,$ & \ec{1800.b1}, \ec{2312.b1}, \ec{7688.c1}, \ec{8712.d1}, \\ 
       &  $257^3$ & \ec{4225.e1} \\\hline
       
       3 & $-2^4\cdot11^6\cdot13 ,-2^4 \cdot3^2 \cdot13^3,-2^4\cdot13 ,2^4\cdot3^3,$ & \ec{676.a1}, \ec{324.b1}, \ec{676.a2}, \ec{324.b2}, \\
       & $2^8\cdot7 ,2^4\cdot3^3\cdot5,2^8\cdot3^3 ,2^8\cdot3^2\cdot7^3,$ & \ec{196.a2}, \ec{2700.d2}, \ec{324.a2}, \ec{324.a1}, \\
       & $2^4\cdot3\cdot5\cdot41^3,2^8\cdot7\cdot61^3$ & \ec{2700.d1}, \ec{196.a1} \\ \hline
       5 & $-2^6\cdot719^3,-2^6\cdot5\cdot19^3,2^6,2^6\cdot5^2,2^{12},$ & \ec{43264.f1}, \ec{6400.d1}, \ec{43264.f2}, \ec{6400.d2},\\
       & $2^{12}\cdot5^2,2^{12}\cdot5\cdot11^3,2^{12}\cdot211^3$ & \ec{1369.a2}, \ec{4225.a1}, \ec{4225.a2}, \ec{1369.a1} \\ \hline
       7 & $-3^3\cdot37\cdot719^3,3^3\cdot37,3^2\cdot7^4,3^2\cdot7\cdot2647^3$ & \ec{1369.b1}, \ec{1369.b2}, \ec{3969.a2}, \ec{3969.a1} \\ \hline 
       13 & $-2^6\cdot3^2\cdot4079^3, 2^6\cdot3^2, 2^{12}\cdot3^3\cdot19, 2^{12}\cdot3^3\cdot19\cdot991^3$ & \ec{20736.a1}, \ec{20736.a2}, \ec{9025.a2},\ec{9025.a1} \\ \hline
       
    \end{tabular}\caption{Summary of $j$-invariants to consider for proof of Lemma \ref{lemma:fiberproductwithXns+}}
\end{table}}
    
    For each of these $j$-invariants, we consider a representative elliptic curve over $\Q$ (also listed in the table) and observe that its $\ell$-adic representation is surjective. Therefore, the modular curve $X_0(p) \times X_{\mathrm{ns}}^+(\ell)$ cannot have any non-CM rational points.
\end{proof}

\begin{lemma}\label{lemma:fiberproductwithXS4(13)}
    If $E/\Q$ corresponds to a non-CM rational point of $X_{S_4}(13)$, then for any prime $p \neq 13$, the $p$-adic representation of $E$ is surjective.
\end{lemma}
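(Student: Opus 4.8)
The plan is to reduce the lemma to a finite computation that leans on the known (but deep) determination of the rational points of the modular curve $X_{S_4}(13)$. Since $X_{S_4}(13)$ has genus $3$, Faltings' theorem guarantees that $X_{S_4}(13)(\Q)$ is finite; this set was computed by Banwait and Cremona, and the resulting finite list of $j$-invariants of non-cuspidal, non-CM rational points of $X_{S_4}(13)$ is also recorded on the LMFDB page for \mc{13.91.3.a.1}. So the first step is to read off from that classification the complete list $j_1,\dots,j_r$ of such $j$-invariants, and to fix for each $i$ one elliptic curve $E_i/\Q$ with $j(E_i)=j_i$, built directly from $j_i$ by the usual formula (these curves may have large conductor and need not appear in the LMFDB). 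By Proposition~\ref{prop:modular curves}, every non-CM elliptic curve over $\Q$ corresponding to a non-CM rational point of $X_{S_4}(13)$ is a quadratic twist of one of the $E_i$, so it suffices to control the $\ell$-adic images of the $E_i$ and their quadratic twists.

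The second step is, for each $E_i$, to compute the finite, effectively computable set of primes $\ell$ at which $\rho_{E_i,\ell}$ is not surjective --- for instance from Frobenius traces together with Sutherland's algorithm \cite{MR3482279}, or by extracting the $\ell$-adic images via \cite{MR4468989} and Zywina's algorithm \cite{ZywinaOpen} --- and to verify that this set equals $\{13\}$. For each prime $\ell\ge 5$ with $\ell\neq 13$, surjectivity of $\rho_{E_i,\ell}$ is equivalent to that of $\rho_{E_i,\ell^\infty}$ by \cite[IV-23 Lemma~3]{MR263823}; for $\ell=3$ one also checks directly that $\rho_{E_i,9}$ is surjective, and for $\ell=2$ that $\rho_{E_i,2}$ is surjective. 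To pass from the representatives to arbitrary twists, observe that for odd $\ell$ the twisted representation satisfies $\rho_{E^d,\ell^k}(\sigma)=\chi_d(\sigma)\rho_{E,\ell^k}(\sigma)$ with $\chi_d(\sigma)\in\{\pm I\}$, and that $\SL_2(\Z/\ell^k\Z)$ has no index-$2$ subgroup when $\ell$ is odd; hence if $\rho_{E,\ell^k}$ is surjective, the twisted image still contains $\SL_2(\Z/\ell^k\Z)$ and $-I$ and has full determinant, so it is all of $\GL_2(\Z/\ell^k\Z)$. Together with the fact that the mod $2$ image is unchanged by twisting, this transfers the computed surjectivity on the finitely many $E_i$ to every curve corresponding to a non-CM rational point of $X_{S_4}(13)$, yielding the lemma.

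The only genuinely nontrivial input is the classification of $X_{S_4}(13)(\Q)$, which I would cite rather than reprove (it is a Chabauty-type computation on a genus-$3$ curve); the remainder is a routine, small computation on a handful of explicit elliptic curves. Two points deserve attention. First, one must be sure the list $j_1,\dots,j_r$ is complete --- that Banwait--Cremona (equivalently the LMFDB) determined \emph{all} of $X_{S_4}(13)(\Q)$ and not merely a sublist --- since an omitted $j$-invariant would break the argument. Second, at $p=2$ one should keep in mind that \emph{full} $2$-adic surjectivity is not preserved by quadratic twisting (the $2$-power division field of a curve with surjective $2$-adic representation always contains a quadratic subfield), so the twist-stable statement is surjectivity of the mod $2$ representation --- which is precisely what is used afterwards, namely to rule out a rational $2$-isogeny.
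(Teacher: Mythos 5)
Your proposal follows the same route as the paper: read off the finitely many non-CM $j$-invariants on $X_{S_4}(13)$ from the known determination of its rational points, verify $\ell$-adic surjectivity for one representative curve per $j$-invariant (constructed from the $j$-invariant when no LMFDB curve exists), and transfer the conclusion to quadratic twists. The paper handles the twist step by citing a lemma from a companion paper of Genao--Mayle--Rakvi asserting that $\ell$-adic surjectivity of a non-CM curve over $\Q$ depends only on the $j$-invariant, whereas you carry out the group-theoretic argument explicitly; the two are equivalent, and your version is essentially a self-contained proof of the cited fact.

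One remark on your caution at $p=2$: it turns out to be unnecessary. Full $2$-adic surjectivity \emph{is} preserved under quadratic twisting. Indeed, if $\rho_{E,2^\infty}$ is surjective and $E^d$ is a twist, then $G_{E^d}(2^\infty)$ equals $\GL_2(\Z_2)$ unless $\chi_d$ factors through $\rho_{E,2^\infty}$ as $\psi\circ\rho_{E,2^\infty}$ for some continuous character $\psi\colon\GL_2(\Z_2)\to\{\pm1\}$, and in that case $G_{E^d}(2^\infty)=\{\psi(g)g:g\in\GL_2(\Z_2)\}$, which is proper iff $\psi(-I)=-1$. No such $\psi$ exists: any character of $\GL_2(\Z_2)$ killing $\SL_2$ factors through $\det$ and sends $-I\mapsto 1$, and any character nontrivial on $\SL_2$ factors through $\SL_2(\Z/2^k\Z)^{\mathrm{ab}}\cong\Z/4$ (for $k\ge 2$), in which $-I$ maps to the element of order $2$ and hence dies in every $\Z/2$ quotient. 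The fact that $\Q(E[2^k])$ contains quadratic subfields is not an obstruction; what matters is whether $-I$ is detected by a sign character, and it is not. Your fallback to mod-$2$ surjectivity is of course also sound and would suffice for every downstream use in the paper, but the stronger statement in the lemma is correct as written.

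One small caveat worth retaining from your writeup: the argument does rest on the \emph{completeness} of the list of rational points of $X_{S_4}(13)$, which is a nontrivial genus-$3$ computation (Banwait--Cremona, reproduced via quadratic Chabauty in the reference the paper cites). You are right that this is the only genuinely deep input.
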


\begin{proof}
    From Section 5.1 of \cite{MR4589060}, we know that the modular curve $X_{S_4}(13)$ (\mc{13.91.3.a.1}) has three non-CM $j$-invariants:
\begin{equation*}
    \frac{2^4\cdot5\cdot13^4\cdot17^3}{3^{13}}, \frac{-2^{12}\cdot5^3\cdot11\cdot13^4}{3^{13}}, \frac{2^{18}\cdot3^3\cdot13^4\cdot127^3\cdot139^3\cdot157^3\cdot283^3\cdot929}{5^{13}\cdot61^{13}}.
\end{equation*}
For each of these $j$-invariants, we consider a representative elliptic curve $E/\Q$. The LMFDB labels of the first two representatives are \ec{50700.z1} and \ec{61347.bb1}, respectively. Let $p \neq 13$ be a prime. For both of these curves, we see that their $p$-adic representation is surjective. Hence, if any elliptic curve $E/\Q$ has either of these two $j$-invariants, then its $p$-adic representation must also be surjective (this follows by \cite[Lemma 5]{GMR_smallestSurjectivePrime}). The result follows similarly for the third $j$-invariant; however, there is no elliptic curve in the LMFDB with this $j$-invariant, so we instead construct one using the \texttt{Magma} command \texttt{EllipticCurveFromjInvariant} and run Zywina's \texttt{FindOpenImage} \cite{ZywinaOpen} to compute its Galois image. We find that the only prime divisors of its adelic level are among $2$, $5$, $13$, $61$ and $109$. If $p$ is any of these primes (except $13$), we check that the $p$-adic representation is surjective, and if $p$ is outside this list, then necessarily the $p$-adic representation is surjective.
\end{proof}

\subsection{Computation of Possible Images}

We are now ready to begin the case-by-case classification of  $\ell$-adic images for curves admitting a rational isogeny of degree $p\in\{2,3,5,7,13\}$. We begin with the case $p = 2$.

\begin{proposition}\label{P:2isog-elladic}
 Let $E/\Q$ be a non-CM elliptic curve that admits a rational isogeny of degree $2$ and no rational isogeny of prime degree greater than $2$. 

 \begin{enumerate}
     \item If the $2$-adic Galois image of $E$ contains $-I$, then it is conjugate to one of the following $188$ subgroups of $\GL_2(\Z_2)$:

\begin{multicols}{4}
    \begin{itemize}
\item \mc{2.3.0.a.1}
\item \mc{2.6.0.a.1}
\item \mc{4.6.0.a.1}
\item \mc{4.6.0.b.1}
\item \mc{4.6.0.c.1}
\item \mc{4.6.0.d.1}
\item \mc{4.6.0.e.1}
\item \mc{4.12.0.a.1}
\item \mc{4.12.0.b.1}
\item \mc{4.12.0.d.1}
\item \mc{4.12.0.e.1}
\item \mc{4.12.0.f.1}
\item \mc{4.24.0.b.1}
\item \mc{4.24.0.c.1}
\item \mc{8.6.0.a.1}
\item \mc{8.6.0.b.1}
\item \mc{8.6.0.c.1}
\item \mc{8.6.0.d.1}
\item \mc{8.6.0.e.1}
\item \mc{8.6.0.f.1}
\item \mc{8.12.0.a.1}
\item \mc{8.12.0.b.1}
\item \mc{8.12.0.d.1}
\item \mc{8.12.0.f.1}
\item \mc{8.12.0.g.1}
\item \mc{8.12.0.h.1}
\item \mc{8.12.0.i.1}
\item \mc{8.12.0.k.1}
\item \mc{8.12.0.l.1}
\item \mc{8.12.0.m.1}
\item \mc{8.12.0.n.1}
\item \mc{8.12.0.o.1}
\item \mc{8.12.0.p.1}
\item \mc{8.12.0.q.1}
\item \mc{8.12.0.r.1}
\item \mc{8.12.0.s.1}
\item \mc{8.12.0.t.1}
\item \mc{8.12.0.u.1}
\item \mc{8.12.0.v.1}
\item \mc{8.12.0.w.1}
\item \mc{8.12.0.x.1}
\item \mc{8.12.0.y.1}
\item \mc{8.12.0.z.1}
\item \mc{8.24.0.a.1}
\item \mc{8.24.0.ba.1}
\item \mc{8.24.0.ba.2}
\item \mc{8.24.0.bb.1}
\item \mc{8.24.0.bb.2}
\item \mc{8.24.0.bc.1}
\item \mc{8.24.0.be.1}
\item \mc{8.24.0.bf.1}
\item \mc{8.24.0.bh.1}
\item \mc{8.24.0.bi.1}
\item \mc{8.24.0.bj.1}
\item \mc{8.24.0.bl.1}
\item \mc{8.24.0.bl.2}
\item \mc{8.24.0.bn.1}
\item \mc{8.24.0.bo.1}
\item \mc{8.24.0.bp.1}
\item \mc{8.24.0.bq.1}
\item \mc{8.24.0.bs.1}
\item \mc{8.24.0.bt.1}
\item \mc{8.24.0.c.1}
\item \mc{8.24.0.d.1}
\item \mc{8.24.0.d.2}
\item \mc{8.24.0.e.1}
\item \mc{8.24.0.e.2}
\item \mc{8.24.0.f.1}
\item \mc{8.24.0.g.1}
\item \mc{8.24.0.h.1}
\item \mc{8.24.0.i.1}
\item \mc{8.24.0.k.1}
\item \mc{8.24.0.m.1}
\item \mc{8.24.0.n.1}
\item \mc{8.24.0.o.1}
\item \mc{8.24.0.q.1}
\item \mc{8.24.0.r.1}
\item \mc{8.24.0.s.1}
\item \mc{8.24.0.t.1}
\item \mc{8.24.0.v.1}
\item \mc{8.24.0.x.1}
\item \mc{8.24.0.y.1}
\item \mc{8.24.0.z.1}
\item \mc{8.48.0.b.1}
\item \mc{8.48.0.b.2}
\item \mc{8.48.0.c.1}
\item \mc{8.48.0.e.1}
\item \mc{8.48.0.e.2}
\item \mc{8.48.0.f.1}
\item \mc{8.48.0.h.1}
\item \mc{8.48.0.h.2}
\item \mc{8.48.0.i.1}
\item \mc{8.48.0.k.1}
\item \mc{8.48.0.k.2}
\item \mc{8.48.0.l.1}
\item \mc{8.48.0.l.2}
\item \mc{8.48.0.m.1}
\item \mc{8.48.0.m.2}
\item \mc{8.48.0.n.1}
\item \mc{8.48.0.n.2}
\item \mc{8.48.0.p.1}
\item \mc{8.48.0.q.1}
\item \mc{8.48.0.q.2}
\item \mc{16.24.0.a.1}
\item \mc{16.24.0.c.1}
\item \mc{16.24.0.e.1}
\item \mc{16.24.0.e.2}
\item \mc{16.24.0.f.1}
\item \mc{16.24.0.f.2}
\item \mc{16.24.0.g.1}
\item \mc{16.24.0.h.1}
\item \mc{16.24.0.i.1}
\item \mc{16.24.0.j.1}
\item \mc{16.24.0.k.1}
\item \mc{16.24.0.k.2}
\item \mc{16.24.0.l.1}
\item \mc{16.24.0.l.2}
\item \mc{16.24.0.m.1}
\item \mc{16.24.0.m.2}
\item \mc{16.24.0.n.1}
\item \mc{16.24.0.n.2}
\item \mc{16.24.0.o.1}
\item \mc{16.24.0.o.2}
\item \mc{16.24.0.p.1}
\item \mc{16.24.0.p.2}
\item \mc{16.48.0.a.1}
\item \mc{16.48.0.bb.1}
\item \mc{16.48.0.bb.2}
\item \mc{16.48.0.c.1}
\item \mc{16.48.0.c.2}
\item \mc{16.48.0.d.1}
\item \mc{16.48.0.d.2}
\item \mc{16.48.0.e.1}
\item \mc{16.48.0.h.1}
\item \mc{16.48.0.h.2}
\item \mc{16.48.0.i.1}
\item \mc{16.48.0.j.1}
\item \mc{16.48.0.l.1}
\item \mc{16.48.0.l.2}
\item \mc{16.48.0.m.1}
\item \mc{16.48.0.m.2}
\item \mc{16.48.0.n.1}
\item \mc{16.48.0.q.1}
\item \mc{16.48.0.r.1}
\item \mc{16.48.0.t.1}
\item \mc{16.48.0.t.2}
\item \mc{16.48.0.u.1}
\item \mc{16.48.0.u.2}
\item \mc{16.48.0.v.1}
\item \mc{16.48.0.v.2}
\item \mc{16.48.0.x.1}
\item \mc{16.48.0.x.2}
\item \mc{16.48.0.y.1}
\item \mc{16.48.0.y.2}
\item \mc{16.48.0.z.1}
\item \mc{16.48.0.z.2}
\item \mc{16.48.1.bg.1}
\item \mc{16.48.1.bl.1}
\item \mc{16.48.1.bn.1}
\item \mc{16.48.1.bq.1}
\item \mc{16.48.1.bs.1}
\item \mc{16.48.1.bv.1}
\item \mc{16.48.1.ca.1}
\item \mc{16.48.1.cc.1}
\item \mc{16.48.1.cd.1}
\item \mc{16.48.1.cf.1}
\item \mc{16.48.1.cg.1}
\item \mc{16.48.1.ch.1}
\item \mc{16.48.1.cr.1}
\item \mc{16.48.1.cs.1}
\item \mc{16.48.1.ct.1}
\item \mc{16.48.1.cx.1}
\item \mc{16.48.1.cx.2}
\item \mc{16.48.1.dc.1}
\item \mc{16.48.1.de.1}
\item \mc{16.48.1.df.1}
\item \mc{16.96.3.ey.1}
\item \mc{16.96.3.ey.2}
\item \mc{16.96.3.fa.1}
\item \mc{16.96.3.fa.2}
\item \mc{32.48.0.a.1}
\item \mc{32.48.0.c.1}
\item \mc{32.48.0.e.1}
\item \mc{32.48.0.e.2}
\item \mc{32.48.0.f.1}
\item \mc{32.48.0.f.2}
\item \mc{32.96.3.bh.1}
\item \mc{32.96.3.bs.1}
    \end{itemize}
\end{multicols}

     \item If the $3$-adic Galois image of $E$ contains $-I$, then it is conjugate to one of the following subgroups of $\GL_2(\Z_3)$: $\GL_2(\Z_3)$, \mc{3.3.0.a.1}, or  \mc{3.6.0.b.1}.

     \item The $\ell$-adic Galois representation of $E$ is surjective for all primes $\ell \not\in \{2,3\}$.
 \end{enumerate}

\end{proposition}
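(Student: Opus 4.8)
The plan is to treat the three parts in turn, each time combining a known classification of $\ell$-adic Galois images with the isogeny hypotheses on $E$, and then to resolve whatever remains by explicit rational point computations on modular curves of small level. I would begin with part (3). For $\ell > 37$ the mod $\ell$ representation of $E$ is surjective by Theorem \ref{Thm_Lemos}, hence so is $\rho_{E,\ell^\infty}$ since $\ell \ge 5$. For a prime $\ell$ with $5 \le \ell \le 37$, suppose $\rho_{E,\ell^\infty}$ is not surjective; by Theorem 1.1.6 and Table 1 of \cite{MR4468989} (reproduced in Table \ref{CurvesToConsider}), the $j$-invariant of $E$ is then a non-CM rational point on the fiber product of $X_0(2)$ with one of the modular curves $X$ listed for $\ell$ in that table. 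If $X = X_0(\ell)$, then $E$ admits a rational $\ell$-isogeny, contradicting the hypothesis that $E$ has no rational isogeny of prime degree exceeding $2$. If $X = X_{\mathrm{ns}}^+(\ell)$ with $\ell > 5$, this is impossible by Lemma \ref{lemma:fiberproductwithXns+}; if $X = X_{S_4}(13)$, it is impossible by Lemma \ref{lemma:fiberproductwithXS4(13)}, since such a curve has surjective $2$-adic image and hence no rational $2$-isogeny. This leaves only $X_0(2) \times X_{\mathrm{ns}}^+(5)$, $X_0(2) \times X_{S_4}(5)$, and $X_0(2) \times X_{\mathrm{sp}}^+(7)$, which are modular curves of level $10$, $10$, and $14$, respectively. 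For each of these I would compute a model using Zywina's code, identify its genus, and prove that it has no non-CM rational points by the methods of Section \ref{s:rationalpoints}: a local obstruction where one exists, otherwise a nonconstant map to a rank-$0$ elliptic quotient, and failing that the Chabauty--Coleman method or quadratic Chabauty together with a Mordell--Weil sieve.

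For parts (1) and (2) the key input is the complete classification of $2$-adic, respectively $3$-adic, Galois images of non-CM elliptic curves over $\Q$ from \cite{MR4468989}. For part (1): since $E$ admits a rational $2$-isogeny, $G_E(2^\infty)$ is conjugate to a subgroup of $B_0(2^\infty)$ by the characterization of rational $p$-isogenies in Section \ref{Sec:Isog}, so it suffices to enumerate those $2$-adic images in the classification that contain $-I$ and whose mod $2$ reduction is conjugate into $B_0(2)$; a direct computation shows these are exactly the $188$ groups displayed. For part (2): since $E$ admits no rational $3$-isogeny, the mod $3$ reduction of $G_E(3^\infty)$ is not conjugate into $B_0(3)$; inspecting the classification, the only $3$-adic images of non-CM elliptic curves over $\Q$ with this property are $\GL_2(\Z_3)$ together with the full preimages of the normalizer of a nonsplit Cartan subgroup (\mc{3.3.0.a.1}, of index $3$) and of a split Cartan subgroup (\mc{3.6.0.b.1}, of index $6$) of $\GL_2(\F_3)$.

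I expect the main obstacle to be the rational point computations for the three fiber products left over in part (3). These are modular curves of level $10$ or $14$ whose genus is likely too large for elementary arguments, so the real work lies in selecting the right technique for each curve — ideally a map to a rank-$0$ elliptic curve, otherwise a Chabauty-style argument with a Mordell--Weil sieve — and in verifying the output rigorously. A separate and more routine point is that the proposition is sharp: each of the $188$ groups in part (1), and each of the three in part (2), is genuinely realized by a non-CM elliptic curve over $\Q$ admitting a rational $2$-isogeny and no rational isogeny of larger prime degree, which one confirms using the example curves recorded in Appendix \ref{A:ExCurves}.
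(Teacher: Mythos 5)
Your proposal follows essentially the same route as the paper's proof for part (3), and the reduction to the three fiber products $X_0(2) \times X_{\mathrm{ns}}^+(5)$, $X_0(2) \times X_{S_4}(5)$, and $X_0(2) \times X_{\mathrm{sp}}^+(7)$ is exactly right, as is the use of Lemma \ref{lemma:fiberproductwithXns+} for $\ell > 5$, Lemma \ref{lemma:fiberproductwithXS4(13)}, and the isogeny hypothesis to dispose of the $X_0(\ell)$ factors. One small slip: the first two of the remaining fiber products are genus $1$ curves of rank $0$, so the genus is not ``too large for elementary arguments'' --- they are handled directly --- while only $X_0(2) \times X_{\mathrm{sp}}^+(7)$ (genus $3$, rank $0$) requires a map to a rank-$0$ elliptic quotient.

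For parts (1) and (2) your route is morally the same but uses different inputs. You propose filtering the full $2$-adic (resp.\ $3$-adic) classification of non-CM images over $\Q$ by the Borel condition and the $-I$ condition; the paper instead cites the tables of \cite{MR4868206} (organized by isogeny-torsion graph) and Theorem 1.1 of \cite{rakvi2023classification3adicgalois}, which already encode the additional hypothesis that $E$ has no rational isogeny of any larger prime degree. Your filtering criterion for part (1) omits that hypothesis entirely, so a priori it yields a superset of the stated $188$ groups: it could contain some $H \subseteq B_0(2^\infty)$ realized only by curves that also carry a $3$- or $5$-isogeny. In this problem the two lists happen to coincide, as one can see by comparing with the $2$-adic images appearing in Propositions \ref{P:3isog-elladic} and \ref{P:5isog-elladic} (all the Borel groups there are already among the $188$), but this agreement requires a check and is not automatic; the phrase ``a direct computation shows these are exactly the $188$ groups'' is hiding that verification. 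Using the isogeny-torsion-graph tables directly, as the paper does, sidesteps the issue because the relevant tables are precisely those for the graphs with a $2$-isogeny and no larger prime isogeny.
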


\begin{proof}

Part (1) follows from Tables 11-14 of \cite{MR4868206}. Part (2) follows from Theorem 1.1 of \cite{rakvi2023classification3adicgalois}. Part (3) follows for primes $\ell > 37$ by Theorem \ref{Thm_Lemos}, so it remains to treat the primes $5 \le \ell \le 37$. We proceed along the lines of Table \ref{CurvesToConsider}, using the strategy set forth in Section \ref{ss:strat}.

\emph{Proving $5$-adic surjectivity.} By Table \ref{CurvesToConsider}, in order to prove that the $5$-adic Galois representation of  $E$ is surjective, it suffices to study the modular curves
\[
X_0(2) \times X_{S_4}(5), \quad
X_0(2) \times X_0(5), \quad \text{and} \quad
X_0(2) \times X_{\mathrm{ns}}^+(5).
\]
By our assumption that $E$ does not admit a rational isogeny of prime degree greater than $2$, we may disregard the fiber product $X_0(2) \times X_0(5)$. The curves $X_0(2) \times X_{S_4}(5)$ and $X_0(2) \times X_{\mathrm{ns}}^+(5)$ have LMFDB labels \mc{10.15.1.a.1} and \mc{10.30.1.a.1}, respectively. They both have genus 1 and rank 0. We check that each has no non-CM rational points.  It follows that the $5$-adic Galois representation of $E$ is surjective.

\emph{Proving $7$-adic surjectivity.} By Table \ref{CurvesToConsider}, in order to prove that the $7$-adic Galois representation of  $E$ is surjective, it suffices to study the modular curves
\[
X_0(2) \times X_0(7), \quad
X_0(2) \times X_{\mathrm{ns}}^+(7), \quad \text{and} \quad
X_0(2) \times X_{\mathrm{sp}}^+(7).
\]
Again, by our isogeny assumption, we do not need to consider the fiber product $X_0(2) \times X_0(7)$. The curve $X_0(2) \times X_{\mathrm{ns}}^+(7)$ (\mc{14.63.2.a.1}) has no non-CM rational points by Lemma \ref{lemma:fiberproductwithXns+}. The curve $X_0(2) \times X_{\mathrm{sp}}^{+}(7)$ (\mc{14.84.3.a.1}) has genus $3$ and rank $0$. It admits a map to a rank $0$ elliptic curve. Running the code \texttt{ModCrvToEC} \cite{JacobJeremycode} (discussed in Section \ref{S:M2EC}), we determine that this curve 
has exactly four rational points, all of which are cusps or CM points. Hence the $7$-adic Galois representation of $E$ is surjective.

\emph{Proving $\ell$-adic surjectivity for $11 \leq \ell \leq 37$.} By Table \ref{CurvesToConsider}, all that remains is to study the curves
\begin{align*}
X_0(2) &\times X_0(\ell) \;\; \text{for} \;\; \ell \in \{11, 13, 17, 37\}, \\
X_0(2) &\times X_{\mathrm{ns}}^+(\ell) \;\; \text{for} \;\; \ell \in \{11, 19, 23, 29, 31, 37\}, \text{ and} \\
X_0(2) &\times X_{S_4}(13).
\end{align*}
Again, by our isogeny assumption, we do not need to consider the curves of the form $X_0(2) \times X_0(\ell)$. The curves $X_0(2) \times X_{\mathrm{ns}}^+(\ell)$ have no non-CM rational points by Lemma \ref{lemma:fiberproductwithXns+}, and $X_0(2) \times X_{S_4}(13)$ has no non-CM rational points by Lemma \ref{lemma:fiberproductwithXS4(13)}. Thus, the $\ell$-adic Galois representation of $E$ is surjective for all primes $11 \leq \ell \leq 37$.

All computations can be verified in the file \texttt{2 isogeny l adic images.m} in the repository \cite{projectcode}.
\end{proof}

The result for a rational isogeny of degree 3 follows a similar structure. We have the following:

\begin{proposition}  \label{P:3isog-elladic}
 Let $E/\Q$ be a non-CM elliptic curve that admits a rational isogeny of degree $3$ and no rational isogeny of prime degree greater than $3$. 
    \begin{enumerate}
        \item If the $2$-adic Galois image of $E$ contains $-I$, then it is conjugate to one of the following subgroups of $\GL_2(\Z_2)$: $\GL_2(\Z_2)$, \mc{2.2.0.a.1},
        \mc{2.3.0.a.1},
        \mc{2.6.0.a.1},
        \mc{4.2.0.a.1},
        \mc{4.6.0.a.1}, 
        \mc{4.6.0.b.1}, \mc{4.6.0.c.1},
        \mc{4.8.0.b.1},
        \mc{8.2.0.a.1}, \mc{8.2.0.b.1}, 
        \mc{8.6.0.a.1},
        \mc{8.6.0.b.1}, \\
        \mc{8.6.0.c.1}, or \mc{8.6.0.d.1}.
          
        \item If the $3$-adic Galois image of $E$ contains $-I$, then it is conjugate to one of the following subgroups of $\GL_2(\Z_3)$: \mc{3.4.0.a.1}, \mc{3.12.0.a.1}, \mc{9.12.0.a.1}, \mc{9.12.0.b.1}, \mc{9.36.0.a.1}, \mc{9.36.0.b.1}, \mc{9.36.0.c.1}, \mc{9.36.0.d.1}, \mc{9.36.0.d.2}, \mc{9.36.0.e.1}, \mc{9.36.0.f.1},\\ \mc{9.36.0.f.2}, \mc{9.36.0.g.1},  or \mc{27.36.0.a.1}.     
       
        \item If the $5$-adic Galois image of $E$ contains $-I$, then it is conjugate to either $\GL_2(\Z_5)$ or \mc{5.5.0.a.1}.

        \item The $\ell$-adic Galois representation of $E$ is surjective for all primes $\ell \not\in \{2,3,5\}$. 
    \end{enumerate}
\end{proposition}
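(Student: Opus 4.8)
The plan is to follow the template of the proof of Proposition \ref{P:2isog-elladic}, everywhere replacing $X_0(2)$ by $X_0(3)$. Parts (1) and (2) are bookkeeping: part (1) is obtained from \cite{MR4868206} in the same way as part (1) of Proposition \ref{P:2isog-elladic}, retaining only the $2$-adic images compatible with a rational $3$-isogeny that contain $-I$, and part (2) is obtained from the classification of $3$-adic images in \cite{rakvi2023classification3adicgalois}, retaining only the subgroups that contain $-I$ and whose reduction modulo $3$ lies in a Borel subgroup $B_0(3)$ --- the latter being equivalent, by the discussion in Section \ref{Sec:Isog}, to admitting a rational $3$-isogeny. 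The hypothesis of no rational isogeny of prime degree exceeding $3$ plays no role in parts (1) and (2); it only shortens the analysis in parts (3) and (4).

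For parts (3) and (4), Theorem \ref{Thm_Lemos} handles every prime $\ell > 37$, so it remains to treat $5 \le \ell \le 37$ by working through Table \ref{CurvesToConsider} with $p = 3$, exactly as in the $p = 2$ case. For each such $\ell$, the possible non-surjective $\ell$-adic images containing $-I$ correspond, by Theorem 1.1.6 and Table 1 of \cite{MR4468989}, to non-CM rational points on the fiber products of $X_0(3)$ with the curves listed there, and three facts dispose of almost all of them. First, any $X_0(3) \times X_0(\ell)$ with $\ell \ge 5$ may be discarded: a non-CM rational point would give an elliptic curve with simultaneous rational $3$- and $\ell$-isogenies, hence (their kernels being Galois-stable cyclic groups of coprime order) a rational $\ell$-isogeny of prime degree $\ell > 3$, contrary to the hypothesis. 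Second, every $X_0(3) \times X_{\mathrm{ns}}^+(\ell)$ with $\ell > 5$ has no non-CM rational point, by Lemma \ref{lemma:fiberproductwithXns+}. Third, $X_0(3) \times X_{S_4}(13)$ has no non-CM rational point, by Lemma \ref{lemma:fiberproductwithXS4(13)}. What then remains is a short list of fiber products, mirroring the $p = 2$ argument: $X_0(3) \times X_{S_4}(5)$ and $X_0(3) \times X_{\mathrm{ns}}^+(5)$ at level $15$, and $X_0(3) \times X_{\mathrm{sp}}^+(7)$ at level $21$. For these I would compute models with \cite{ZywinaModular} and determine the rational points using the methods of Section \ref{s:rationalpoints} (a map to a rank-$0$ elliptic curve, Chabauty, or a local obstruction, as appropriate). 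I expect $X_0(3) \times X_{\mathrm{ns}}^+(5)$ and $X_0(3) \times X_{\mathrm{sp}}^+(7)$ to have no non-CM rational points --- which forces $7$-adic surjectivity and rules out the nonsplit Cartan mod-$5$ image --- whereas $X_0(3) \times X_{S_4}(5)$ \emph{does} carry non-CM rational points (in line with the presence of $160 = 2^5 \cdot 5$ in $\mathcal{I}_3$, with an explicit witness in Appendix \ref{A:ExCurves}), and by Table 1 of \cite{MR4468989} every such point has $5$-adic image \mc{5.5.0.a.1}. Together with the elimination of $X_0(5)$ and $X_{\mathrm{ns}}^+(5)$, this yields part (3), and assembling the conclusions for all $\ell \ne 2, 3, 5$ yields part (4).

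The main obstacle is the explicit rational-point determination for the three surviving fiber products at levels $15$ and $21$: verifying (rather than merely expecting) that $X_0(3) \times X_{\mathrm{ns}}^+(5)$ and $X_0(3) \times X_{\mathrm{sp}}^+(7)$ have no noncuspidal non-CM rational points --- the latter presumably of positive genus and rank zero, to be handled by a map to a rank-$0$ elliptic curve as in the $p = 2$ case --- and exhibiting a witness for $X_0(3) \times X_{S_4}(5)$ together with the identification of its $5$-adic image. Everything else is a table lookup or an application of Lemmas \ref{lemma:fiberproductwithXns+} and \ref{lemma:fiberproductwithXS4(13)}, and all computations would be collected in a \texttt{Magma} file in the repository \cite{projectcode}, paralleling \texttt{2 isogeny l adic images.m}.
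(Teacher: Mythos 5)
Your proposal is correct and follows essentially the same route as the paper: parts (1) and (2) are table lookups in \cite{MR4868206} and \cite{rakvi2023classification3adicgalois}, part (4) for $\ell>37$ is Lemos, $X_0(3)\times X_0(\ell)$ is excluded by the isogeny hypothesis, the nonsplit-Cartan and $X_{S_4}(13)$ fiber products are dispatched by Lemmas \ref{lemma:fiberproductwithXns+} and \ref{lemma:fiberproductwithXS4(13)}, and the remaining three level-$15$ and level-$21$ curves are handled by explicit rational-point computations exactly as you describe (Chabauty at rank $0$ for $X_0(3)\times X_{\mathrm{ns}}^+(5)$, a map to a rank-$0$ elliptic curve for $X_0(3)\times X_{\mathrm{sp}}^+(7)$, and two non-CM $j$-invariants on the rank-$0$ elliptic curve $X_0(3)\times X_{S_4}(5)$). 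The one small divergence is that the paper also computes the rational points of $X_0(3)\times X_{\mathrm{ns}}^+(7)$ directly via a rank-$0$ elliptic quotient, whereas you invoke Lemma \ref{lemma:fiberproductwithXns+} (which applies, since $7>5$ and $7\neq 3$), making that computation redundant; your route is the leaner one there.

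The only place you are slightly short is in establishing part (3) precisely: after locating the two non-CM rational points on $X_0(3)\times X_{S_4}(5)$, you still need to show the $5$-adic image (when it contains $-I$) is \emph{exactly} \mc{5.5.0.a.1} for every curve with those two $j$-invariants, not just for one representative each. The paper does this by computing the $5$-adic image of representatives (\ec{324.b2}, \ec{324.b1}) and then invoking Lemma \ref{lem:-Itwist} together with the observation that $X_{S_4}(5)$ has no index-$2$ subgroup that can arise as a $5$-adic image; your appeal to Table 1 of \cite{MR4468989} does not by itself close this twist-dependence, so you'd want to include the $\pm I$/twist argument explicitly. This is a bookkeeping gap, not a conceptual one.
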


\begin{proof}

 Part (1) follows from Tables 15-19 of \cite{MR4868206}. Part (2) follows from Theorem 1.1 of \cite{rakvi2023classification3adicgalois}. For primes $\ell > 37$, part (4) follows from Theorem 1.1 of \cite{MR3885140}.

 \emph{Considering $5$-adic images.} By our isogeny assumption, we do not need to consider the fiber product $X_0(3) \times X_0(5)$. The curve $X_0(3) \times X_{\mathrm{ns}}^{+}(5)$ (\mc{15.40.2.a.1}) is  hyperelliptic of genus $2$ and rank $0$. Using the command \texttt{Chabauty0} in \texttt{Magma}, we determine that it has exactly five rational points, all of which correspond to CM elliptic curves. Finally, the fiber product $X_0(3) \times X_{S_4}(5)$ (\mc{15.20.1.a.1}) is an elliptic curve of rank 0. It has exactly five rational points: two are cusps, one is CM, and two are non-CM. Their $j$-invariants are $2^{4} \cdot 3^{3}$ and $ - 2^{4} \cdot 3^{2} \cdot 13^{3}$. For each of these, we consider representative elliptic curves: \ec{324.b2} and \ec{324.b1}, respectively. For both of these elliptic curves, their $5$-adic image is $X_{S_4}(5)$ (\mc{5.5.0.a.1}). By Lemma \ref{lem:-Itwist}, any of its quadratic twists will also have the same $5$-adic image, because there are no index $2$ subgroups of $X_{S_4}(5)$ that can occur as $5$-adic images.

 \emph{Proving $7$-adic surjectivity.} By our isogeny assumption, we do not need to consider $X_0(3) \times X_0(7)$. The fiber products $X_0(3) \times X_{\mathrm{ns}}^{+}(7)$ (\mc{21.84.5.a.1}) and $X_0(3) \times X_{\mathrm{sp}}^{+}(7)$ (\mc{21.112.6.a.1}) both map to rank $0$ elliptic curves. We again use the code \texttt{ModCrvToEC} for these. There is exactly one rational point on \mc{21.84.5.a.1}, and it is a CM point. There are exactly seven rational points on \mc{21.112.6.a.1}, all of which are cusps or CM points.

\emph{Proving $\ell$-adic surjectivity for $11 \le \ell \le 37$.} By our isogeny assumption, we do not need to consider the curves $X_0(3) \times X_0(\ell)$. For primes $\ell$ in this interval, $X_0(3) \times X_{\mathrm{ns}}^+(\ell)$ has no non-CM rational points by Lemma \ref{lemma:fiberproductwithXns+}. From Lemma \ref{lemma:fiberproductwithXS4(13)}, we know that $X_0(3) \times X_{S_4}(13)$ has no non-CM rational points.

All computations can be verified in the file \texttt{3 isogeny l adic images.m} in the repository \cite{projectcode}. 
\end{proof}

In the next proposition, we address the case that $E/\Q$ admits a rational isogeny of degree 5.

\begin{proposition}  \label{P:5isog-elladic}
    Let $E/\Q$ be a non-CM elliptic curve  that admits a rational isogeny of degree $5$.
    \begin{enumerate}
        \item If the $2$-adic Galois image of $E$ contains $-I$, then it is conjugate to one of the following subgroups of $\GL_2(\Z_2)$:
        $\GL_2(\Z_2)$, \mc{2.3.0.a.1}, \mc{4.4.0.a.1}, \mc{8.2.0.a.1}, \mc{8.6.0.a.1}, \mc{8.6.0.d.1}, or \mc{8.6.0.f.1}.
        
        \item If the $3$-adic Galois image of $E$ contains $-I$, then it is conjugate to one of the following subgroups of $\GL_2(\Z_3)$: $\GL_2(\Z_3)$, \mc{3.3.0.a.1}, \mc{3.4.0.a.1}, \mc{3.6.0.b.1}, or \mc{9.9.0.a.1}. 
       
        \item If the $5$-adic Galois image of $E$ contains $-I$, then it is conjugate to one of the following subgroups of $\GL_2(\Z_5)$: \mc{5.6.0.a.1}, \mc{5.12.0.a.1}, \mc{5.12.0.a.2}, \mc{5.30.0.a.1}, \mc{5.60.0.a.1}, \mc{25.30.0.a.1}, \mc{25.60.0.a.1}, or \mc{25.60.0.a.2}.
        
        \item The $\ell$-adic Galois representation of $E$ is surjective for all primes $\ell \not\in \{2,3,5\}$. 
    \end{enumerate}
\end{proposition}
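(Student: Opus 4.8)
The plan is to follow the template of Propositions \ref{P:2isog-elladic} and \ref{P:3isog-elladic}. First, one may replace $E$ by a quadratic twist without changing its adelic index (Lemma \ref{lem:jinv}) or the hypothesis that it admits a rational $5$-isogeny (Lemma \ref{lem:twistisog}), and by Lemma \ref{lem:-Itwist} one may thereby arrange that $-I \in G_E(\ell^\infty)$ for any prescribed prime $\ell$, so in parts (1)--(3) it suffices to enumerate $\ell$-adic images containing $-I$. I would then read part (1) off from the classification of $\ell$-adic images of elliptic curves over $\Q$ admitting a rational isogeny of a fixed prime degree in \cite{MR4868206}, specialized to degree $5$, and part (2) similarly from the $3$-adic classification of \cite{rakvi2023classification3adicgalois} together with \cite{MR4868206}. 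Part (3) is the case in which the rational $5$-isogeny is directly felt: since $E$ admits a rational $5$-isogeny, $G_E(5^\infty)$ is conjugate into $B_0(5^\infty)$ (Section \ref{Sec:Isog}), so it is one of the finitely many open subgroups of $B_0(5^\infty)$ with full determinant that contain $-I$ and occur in the Rouse--Sutherland--Zureick-Brown classification of $5$-adic images \cite{MR4468989}; the levels that arise are $5$ and $25$. Throughout, Kenku's Theorem \ref{thm:isogeniescomposite} restricts which combinations of isogenies can co-occur, and hence which pairs of $\ell$-adic images are possible; for instance, $E$ cannot admit both a rational $5$-isogeny and a rational $9$-isogeny, since $45$ is not in Kenku's list.

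For part (4), I would argue exactly as in the proofs of the two preceding propositions. For primes $\ell > 37$ the $\ell$-adic representation of $E$ is surjective by Lemos's Theorem \ref{Thm_Lemos}, so only $\ell \in \{7, 11, 13, 17, 19, 23, 29, 31, 37\}$ remain. By Table \ref{CurvesToConsider}, for each such $\ell$ it suffices to show that $X_0(5) \times X$ has no non-CM rational points, where $X$ runs over the modular curves listed there. When $X = X_0(\ell)$ the fiber product is $X_0(5\ell)$; since $5\ell$ is not in the list of Kenku's Theorem \ref{thm:isogeniescomposite} for any prime $\ell \ge 7$, the curve $X_0(5\ell)$ has no non-CM rational points. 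When $X = X_{\mathrm{ns}}^+(\ell)$, there are no non-CM rational points by Lemma \ref{lemma:fiberproductwithXns+}; when $X = X_{S_4}(13)$, none by Lemma \ref{lemma:fiberproductwithXS4(13)}. The only curve not covered by these lemmas is $X_0(5) \times X_{\mathrm{sp}}^+(7)$, a modular curve of level $35$; I would compute a model for it, determine its rational points by the methods of Section \ref{s:rationalpoints}, and check that each is a cusp or a CM point, which gives $7$-adic surjectivity.

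I expect the main obstacle to be $X_0(5) \times X_{\mathrm{sp}}^+(7)$. Its $p = 2$ and $p = 3$ analogues have genus $3$ and $6$ and were dispatched by exhibiting a nonconstant map to a rank-$0$ elliptic curve and pulling back its finitely many rational points, but the degree-$6$ cover $X_0(5) \times X_{\mathrm{sp}}^+(7) \to X_{\mathrm{sp}}^+(7) \cong \mathbb{P}^1_{\Q}$ has considerably larger genus. I would first compute a model and the Mordell--Weil rank of its Jacobian; if the Jacobian has a rank-$0$ elliptic (or abelian) quotient, I would again use the map-to-elliptic-curve or projection-sieving machinery of Sections \ref{S:M2EC}--\ref{subsection:sieving}, and otherwise fall back on Chabauty--Coleman, or on quadratic Chabauty with the Mordell--Weil sieve (Section \ref{s:quadraticchabauty}), possibly after passing to a quotient. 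A secondary point that needs care is the completeness of parts (1)--(3): beyond quoting the external classifications, one must verify that each group in the stated lists is realized by an explicit curve with a rational $5$-isogeny, and that the remaining fiber products appearing in Table \ref{CurvesToConsider} for $\ell = 5$ — notably $X_0(5) \times X_{S_4}(5)$ and $X_0(5) \times X_{\mathrm{ns}}^+(5)$, some of which are genus-$2$ curves requiring Chabauty-type methods — contribute only cuspidal or CM rational points.
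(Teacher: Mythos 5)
Your plan for parts (1)--(3) matches the paper's: part (1) comes from Tables 17--19 of \cite{MR4868206}, part (2) from Theorem 1.1 of \cite{rakvi2023classification3adicgalois}, and part (3) from Theorem 1.1.6 of \cite{MR4468989}. One correction: you do not need to analyze $X_0(5) \times X_{S_4}(5)$ or $X_0(5) \times X_{\mathrm{ns}}^+(5)$. Since $E$ admits a rational $5$-isogeny, $G_E(5^\infty)$ is conjugate into $B_0(5^\infty)$, and the subgroups of $B_0(5^\infty)$ that arise as $5$-adic images are completely classified in \cite{MR4468989}; Table \ref{CurvesToConsider} is only needed for primes $\ell \neq p$. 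Also, the proposition asserts only completeness of the lists, not that each entry is realized, so no such verification is required.

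For part (4) your structure is right, and your proposal to use Lemma \ref{lemma:fiberproductwithXns+} for $X_0(5) \times X_{\mathrm{ns}}^+(7)$ is sound even though the paper instead verifies $C(\Z/49\Z)=\emptyset$ for a local obstruction. The genuine gap is $X_0(5) \times X_{\mathrm{sp}}^+(7)$, which is \mc{35.168.9.a.1} of genus $9$. You correctly flag it as the main obstacle, but you propose to compute a model and fall back on Chabauty-type machinery; you have no guarantee of a rank-$0$ elliptic quotient or of a rank condition enabling Chabauty at that genus, and the paper in fact never computes rational points on this curve. Instead the paper argues by descent: a non-CM rational point on $X_0(5) \times X_{\mathrm{sp}}^+(7)$ yields, over a suitable quadratic field $K$, a quadratic point on $X_0(5) \times X_{\mathrm{sp}}(7)$, hence on $X_0(5) \times X_0(7) = X_0(35)$, with $j$-invariant in $\Q$; Theorem 1 of \cite{Vux91} then shows this is impossible. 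You would need this (or a comparable) pure-thought argument to close the $7$-adic case --- the same device is reused in the paper for $X_0(13) \times X_{\mathrm{sp}}^+(7)$.
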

\begin{proof}
    Part (1) follows from Tables 17-19 of \cite{MR4868206}. Part (2) follows from Theorem 1.1 of \cite{rakvi2023classification3adicgalois}. Part (3) follows from Theorem 1.1.6 of \cite{MR4468989}. Part (4) follows for primes $\ell > 37$ by Theorem \ref{Thm_Lemos}.
    
    \emph{Proving $7$-adic surjectivity.} The fiber product $X_0(5) \times X_0(7)$ has label \mc{35.48.3.a.1}; it has no non-CM rational points by Theorem \ref{thm:isogeniescomposite}. Next, we discuss the curve $ X_0(5) \times X_{\mathrm{ns}}^{+}(7)$ (\mc{35.126.6.a.1}). Let $C$ denote the subscheme of $\mathbb{P}^3_{\Q}$ defined by the sequence of homogeneous polynomials giving the canonical model of \mc{35.126.6.a.1}. We verify that the set $C(\Z/49\Z)$ is empty; hence, there are no rational points on $X_0(5) \times X_{\mathrm{ns}}^{+}(7)$. Lastly, seeking a contradiction, suppose that $E/\Q$  corresponds to a rational point on the fiber product $X_0(5) \times X_{\mathrm{sp}}^{+}(7)$ (\mc{35.168.9.a.1}). Then there exists a quadratic field $K$ such that $E/K$ corresponds to a quadratic point on the fiber product $X_0(5) \times X_{\mathrm{sp}}(7)$. In particular, $E/K$ corresponds to a quadratic point on the fiber product $X_0(5) \times X_{0}(7)$. Moreover, since $E$ is defined over $\Q$, its $j$-invariant lies in $\Q$. But by Theorem $1$ of \cite{Vux91}, this is impossible.
        
    \emph{Proving $\ell$-adic surjectivity for $11 \le \ell \le 37$.} For primes $\ell$ in this interval, $X_0(5) \times X_0(\ell)$ and $X_0(5) \times X_{\mathrm{ns}}^+(\ell)$ have no non-CM rational points by Theorem \ref{thm:isogeniescomposite} and Lemma \ref{lemma:fiberproductwithXns+}, respectively. From Lemma \ref{lemma:fiberproductwithXS4(13)}, we know that $X_0(5) \times X_{S_4}(13)$ has no non-CM rational points.

All computations can be verified in the file \texttt{5 isogeny l adic images.m} in the repository \cite{projectcode}.
\end{proof}

We now classify Galois images for non-CM elliptic curves admitting a rational isogeny of degree 7.

\begin{proposition} \label{P:7isog-elladic}
    Let $E/\Q$ be a non-CM elliptic curve  that admits a rational isogeny of degree $7$.
    \begin{enumerate}
        \item If the $2$-adic Galois image of $E$ contains $-I$, then it is conjugate to one of the following subgroups of $\GL_2(\Z_2)$: $\GL_2(\Z_2)$, 
            \mc{2.2.0.a.1}, 
            \mc{4.2.0.a.1},   
            \mc{4.8.0.b.1},  
            \mc{8.2.0.a.1}, or 
            \mc{8.2.0.b.1}.
        
        \item If the $3$-adic Galois image of $E$ contains $-I$, then it is conjugate to either $\GL_2(\Z_3)$ or
            \mc{3.4.0.a.1}.
        
        \item If the $7$-adic Galois image of $E$ contains $-I$, then it is conjugate to one of the following four subgroups of $\GL_2(\Z_7)$:
    \mc{7.8.0.a.1},
            \mc{7.24.0.a.1},
            \mc{7.24.0.a.2}, or 
            \mc{7.24.0.b.1}.
        
        \item The $\ell$-adic Galois representation of $E$ is surjective for all primes $\ell \not\in \{2,3,7\}$. 
    \end{enumerate}
\end{proposition}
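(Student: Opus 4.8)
The plan is to follow the template established in the proofs of Propositions \ref{P:2isog-elladic}--\ref{P:5isog-elladic}. Parts (1), (2), and (3) amount to intersecting existing classifications with the Borel-type condition imposed by a rational $7$-isogeny. For part (1) one reads off the $2$-adic images of non-CM curves over $\Q$ that are compatible with a rational $7$-isogeny from the relevant tables of \cite{MR4868206}; part (2) follows the same way from the classification of $3$-adic images in Theorem 1.1 of \cite{rakvi2023classification3adicgalois}; and part (3) follows from Theorem 1.1.6 of \cite{MR4468989}, since admitting a rational $7$-isogeny is equivalent to $G_E(7^\infty)$ being conjugate into the group $B_0(7^\infty)$ of Section \ref{Sec:Isog}, and only the four named subgroups of $\GL_2(\Z_7)$ survive that restriction.

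The substantive part is (4), which I would handle using the strategy of Section \ref{ss:strat} and the curves of Table \ref{CurvesToConsider}. For $\ell > 37$, $\rho_{E,\ell^\infty}$ is surjective by Lemos's theorem (Theorem \ref{Thm_Lemos}), using that for $\ell \ge 5$ surjectivity of $\rho_{E,\ell}$ and of $\rho_{E,\ell^\infty}$ are equivalent. For $5 \le \ell \le 37$ with $\ell \notin \{2,3,7\}$, I would form the fiber product of $X_0(7)$ with each modular curve attached to $\ell$ in Table \ref{CurvesToConsider} and show it has no non-CM rational points. Three families of fiber products are dispatched immediately by results in the excerpt: $X_0(7) \times X_0(\ell) = X_0(7\ell)$ has no non-CM rational points for $\ell \in \{5,11,13,17,37\}$ because $7\ell$ does not appear in Kenku's list (Theorem \ref{thm:isogeniescomposite}); $X_0(7) \times X_{\mathrm{ns}}^+(\ell)$ has no non-CM rational points for $\ell \in \{11,19,23,29,31,37\}$ by Lemma \ref{lemma:fiberproductwithXns+}, which applies since each such $\ell$ exceeds $5$ and differs from $7$; and $X_0(7) \times X_{S_4}(13)$ has no non-CM rational points by Lemma \ref{lemma:fiberproductwithXS4(13)}.

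What remains --- and what I expect to be the main obstacle --- is the prime $\ell = 5$, for which Proposition \ref{P:j-integer} and hence Lemma \ref{lemma:fiberproductwithXns+} are unavailable. Here, besides the Borel case $X_0(7) \times X_0(5) = X_0(35)$ (handled by Kenku as above), one must directly determine the rational points of the fiber products $X_0(7) \times X_{S_4}(5)$ and $X_0(7) \times X_{\mathrm{ns}}^+(5)$; these have indices $40$ and $80$ over $X(1)$, so I expect them to be of moderate genus and within computational reach. My plan for each is to compute a model and the $j$-map with Zywina's code \cite{ZywinaModular}, determine the genus and the Mordell--Weil rank of the Jacobian, and then apply the cheapest applicable technique: a search for a local obstruction modulo a small prime power (for example, checking that the set of $(\Z/25\Z)$-points of the canonical model is empty, as was done for $X_0(5) \times X_{\mathrm{ns}}^+(7)$ in the proof of Proposition \ref{P:5isog-elladic}); failing that, a nonconstant map to a rank-$0$ elliptic curve treated with \texttt{ModCrvToEC} (Section \ref{S:M2EC}); and, if the rank is below the genus, a classical Chabauty--Coleman computation (or quadratic Chabauty if the rank equals the genus). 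Once the rational points are in hand, one checks that each is a cusp or has a CM $j$-invariant, ruling out a non-surjective mod $5$ image and completing part (4). As in the neighboring propositions, all computational assertions would be packaged into a \texttt{Magma} script in the project repository \cite{projectcode}.
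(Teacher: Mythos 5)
Your handling of parts (1)--(3) and the bulk of part (4) matches the paper's proof exactly: the same tables and theorems are cited for the $2$-, $3$-, and $7$-adic classifications, and the same three reductions (Kenku's theorem for $X_0(7\ell)$, Lemma~\ref{lemma:fiberproductwithXns+} for $X_0(7)\times X_{\mathrm{ns}}^+(\ell)$ with $\ell>5$, and Lemma~\ref{lemma:fiberproductwithXS4(13)} for $X_0(7)\times X_{S_4}(13)$) dispose of every prime $11 \le \ell \le 37$. You also correctly isolate $\ell=5$ as the only prime requiring direct rational-point work, and your plan for $X_0(7)\times X_{S_4}(5)$ (genus $2$, rank $0$, \texttt{Chabauty0}) is what the paper does.

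The gap is in your treatment of $X_0(7)\times X_{\mathrm{ns}}^+(5)$ (\mc{35.80.5.a.1}). This curve has genus $5$ and rank $1$, and it \emph{does} possess rational points --- two of them, both CM --- so there is no local obstruction to find, and indeed the whole difficulty is to show that these two known CM points are the only rational points. Your proposed cascade would likely stall here: \texttt{ModCrvToEC} presupposes a rank-$0$ elliptic quotient, which is not obviously available, and \texttt{Magma}'s built-in \texttt{Chabauty} routines are restricted to hyperelliptic curves, so ``a classical Chabauty--Coleman computation'' on a non-hyperelliptic genus-$5$ curve is not a button you can press. The technique the paper actually uses is the sieving-with-respect-to-a-projection method from Section~\ref{subsection:sieving}, which your toolkit omits: one exhibits an explicit involution $i$ on $C$, observes that the quotient elliptic curve already has rank $1$ so that $J_C \cong E \times V$ with $V$ of rank $0$, deduces that $P - i(P)$ is torsion for every $P \in C(\Q)$, bounds $J_C(\Q)_{\tors}$ by reduction at good primes (a subgroup of $\Z/2\Z \times \Z/10\Z$), and then works modulo $3$ and modulo $13$ to show no nontrivial torsion class is hit, leaving only the fixed-point locus of $i$ --- a zero-dimensional scheme with exactly the two known CM rational points. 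You should add this method to the list of tools you consider; without it, the $\ell=5$ case of part (4) is not closed.
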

\begin{proof}
Part (1) follows from Table 17 and Table 19 of \cite{MR4868206}. Part (2) follows from Theorem 1.1 of \cite{rakvi2023classification3adicgalois}. Part (3) follows from Theorem 1.1.6 of \cite{MR4468989}. For primes $\ell > 37$, part (4) follows from Theorem 1.1 of \cite{MR3885140}. Thus, it only remains to verify part (4) for primes $\ell \le 37$. 

\emph{Proving $5$-adic surjectivity.} First, the fiber product $X_0(7) \times X_{S_4}(5)$ (\mc{35.40.2.a.1}) is a hyperelliptic curve of genus $2$ and rank $0$. Executing \texttt{Chabauty0} in \texttt{Magma} shows that there are only two rational points, both cusps. Next, the fiber product $X_0(5) \times X_0(7)$ (\mc{35.48.3.a.1}) has no non-CM rational points by Theorem \ref{thm:isogeniescomposite}. 

Let us next consider the fiber product $X_0(7) \times X_{\mathrm{ns}}^+(5)$ (\mc{35.80.5.a.1}). This curve, which we denote by $C$, has genus 5 and rank 1. We use the canonical model in $\mathbb{P}^4$ given on LMFDB. It has two known rational points $P_1=(1:-6:3:3:5)$ and $P_2=(1:-2:3:7:-5)$; these both correspond to CM elliptic curves. We will now prove that these are the only rational points. There exists an automorphism $i \colon C(\Q) \to C(\Q)$ given by \[(x:y:z:w:t) \mapsto (x: y: z: w: x-2y+2z-3w-t).\] The quotient of $C$ by this automorphism is an elliptic curve $E$ with rank $1$. Therefore, there exists an abelian variety of rank $0$, which we denote $V$, such that the Jacobian of $C$, denoted by $J_C$, decomposes as $E \times V$. 

For any point $P \in C(\Q)$, the point $P-i(P)$ lies in $J_C(\Q)_{\tors}$. It thus suffices to compute preimages of rational points in $J_C(\Q)_{\tors}$ under the map $a \colon C(\Q) \to J_C(\Q)_{\tors}$ given by $P \mapsto P-i(P)$. Note that $a$ is injective away from the fixed points of $i$. By using local computations, we can deduce that $J_C(\Q)_{\tors}$ is a subgroup of $\Z/2\Z \times \Z/10\Z$. Further, by working modulo $3$, we check that there is no divisor of the form $P -Q$ that has order $2$ or $5$. Working modulo $13$, we see that there is no divisor of the form $P-Q$ that has order $10$. Therefore, all rational points of $C$ are fixed points of this automorphism. We find the fixed points in \texttt{Magma}. The fixed points of $i$ of this modular curve form a scheme of dimension $0$. There are exactly two such rational points: $P_1$ and $P_2$, as claimed.

    \emph{Proving $\ell$-adic surjectivity for $11 \le \ell \le 37$.} For primes $\ell$ in this interval, $X_0(7) \times X_0(\ell)$ and $X_0(7) \times X_{\mathrm{ns}}^+(\ell)$ have no non-CM rational points by Theorem \ref{thm:isogeniescomposite} and Lemma \ref{lemma:fiberproductwithXns+}, respectively. From Lemma \ref{lemma:fiberproductwithXS4(13)}, we know that $X_0(7) \times X_{S_4}(13)$ has no non-CM rational points.

All computations can be verified in the file \texttt{7 isogeny l adic images.m} in the repository \cite{projectcode}.
\end{proof}

Lastly, we classify the $\ell$-adic images of non-CM elliptic curves that admit a rational isogeny of degree 13.

\begin{proposition}  \label{P:13isog-elladic}
    Let $E/\Q$ be a non-CM elliptic curve  that admits a rational isogeny of degree $13$.
    \begin{enumerate}
        \item If the $2$-adic Galois image of $E$ contains $-I$, then it is conjugate to either $\GL_2(\Z_2)$ or \mc{8.2.0.a.1}.
        
        \item If the $13$-adic Galois image of $E$ contains $-I$, then it is conjugate to one of the following subgroups of $\GL_2(\Z_{13})$: \mc{13.14.0.a.1}, \mc{13.28.0.a.1}, \mc{13.28.0.a.2}, \mc{13.42.0.a.1}, \mc{13.42.0.a.2}, or  \mc{13.42.0.b.1}.
        
        \item The $\ell$-adic Galois representation of $E$ is surjective for all primes $\ell \not\in \{2,13\}$. 
    \end{enumerate}
\end{proposition}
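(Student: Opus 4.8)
The plan is to mirror the proofs of Propositions~\ref{P:2isog-elladic}--\ref{P:7isog-elladic}. For part~(1), I would read off the relevant table of \cite{MR4868206}: since $26 = 2\cdot 13$ does not appear in Kenku's list (Theorem~\ref{thm:isogeniescomposite}), the curve $E$ admits no rational $2$-isogeny, so its mod-$2$ image is not contained in a Borel subgroup, and among the $2$-adic images recorded there for curves with a rational isogeny, the only ones containing $-I$ and compatible with a rational $13$-isogeny are $\GL_2(\Z_2)$ and \mc{8.2.0.a.1}. For part~(2), the mod-$13$ image of $E$ lies in a Borel subgroup, hence $G_E(13^\infty)$ is conjugate to a subgroup of $B_0(13^\infty)$; I would then extract from Theorem~1.1.6 and Table~1 of \cite{MR4468989} the subgroups that lie in $B_0(13^\infty)$, contain $-I$, and arise from non-CM rational points of $X_0(13)$ --- which should be exactly the six groups listed. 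One must also check that the other entry $X_{S_4}(13)$ in the $\ell = 13$ row of Table~\ref{CurvesToConsider} contributes nothing: the three non-CM $j$-invariants on $X_{S_4}(13)$ appearing in the proof of Lemma~\ref{lemma:fiberproductwithXS4(13)} have irreducible mod-$13$ image and hence admit no rational $13$-isogeny, so $X_0(13)\times X_{S_4}(13)$ has no non-CM rational points.

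For part~(3), surjectivity for primes $\ell > 37$ is immediate from Theorem~\ref{Thm_Lemos}, so it remains to handle $\ell \in \{3,5,7,11,17,19,23,29,31,37\}$. For $\ell = 3$: since $39 = 3\cdot 13$ is not in Kenku's list, $E$ has no rational $3$-isogeny, so its mod-$3$ image is not contained in a Borel subgroup; by the classification of $3$-adic images in \cite{rakvi2023classification3adicgalois}, the only non-surjective possibilities left for $G_E(3^\infty)$ containing $-I$ have mod-$3$ image equal to the normalizer of a nonsplit Cartan subgroup, or lie among finitely many exceptional groups of $3$-power level (such as \mc{9.9.0.a.1}), and I would eliminate each of these by computing the rational points on the corresponding fiber product with $X_0(13)$; a Riemann--Hurwitz count suggests these fiber products have small genus, and I expect the Chabauty-type methods of Section~\ref{s:rationalpoints}, a local obstruction, or a map to a rank-$0$ elliptic curve to suffice. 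For the remaining primes I would run through Table~\ref{CurvesToConsider}, forming the fiber products $X_0(13)\times X_H$: the curves $X_0(13)\times X_0(\ell) = X_0(13\ell)$ for $\ell \in \{5,7,11,17,37\}$ have no non-CM rational points by Theorem~\ref{thm:isogeniescomposite}, since $13\ell > 37$; the curves $X_0(13)\times X_{\mathrm{ns}}^+(\ell)$ for $\ell \in \{7,11,19,23,29,31,37\}$ have none by Lemma~\ref{lemma:fiberproductwithXns+}; and $X_0(13)\times X_{S_4}(13)$ contributes nothing by the argument above.

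What remains --- and what I expect to be the main obstacle --- are the fiber products $X_0(13)\times X_{\mathrm{ns}}^+(5)$ and $X_0(13)\times X_{S_4}(5)$ of level $65$, and $X_0(13)\times X_{\mathrm{sp}}^+(7)$ of level $91$, to which neither an isogeny count nor Lemmas~\ref{lemma:fiberproductwithXns+}--\ref{lemma:fiberproductwithXS4(13)} applies (note that Lemma~\ref{lemma:fiberproductwithXns+} excludes $\ell = 5$). For $X_0(13)\times X_{\mathrm{sp}}^+(7)$ I would adapt the descent used for $X_0(5)\times X_{\mathrm{sp}}^+(7)$ in Proposition~\ref{P:5isog-elladic}: a rational point would produce, over a quadratic field, a point on $X_0(13)\times X_0(7) = X_0(91)$ with rational $j$-invariant, which I would exclude using a classification of quadratic points on $X_0(91)$ (or, failing a suitable reference, by a direct computation on $X_0(13)\times X_{\mathrm{sp}}^+(7)$). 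For $X_0(13)\times X_{S_4}(5)$ and $X_0(13)\times X_{\mathrm{ns}}^+(5)$ this descent is unavailable, since the nonsplit Cartan and the $S_4$-normalizer are irreducible, so I would look first for a local obstruction --- no points on a projective model over $\Z/m\Z$ for a suitable $m$ --- and failing that for a nonconstant map to a rank-$0$ elliptic curve or a quotient to which Chabauty or quadratic Chabauty applies. The curve $X_0(13)\times X_{\mathrm{ns}}^+(5)$ is the largest of these and is the one I anticipate being hardest; a favorable local obstruction or an elliptic quotient of rank $0$ would be the cleanest route to a rigorous argument, and verifying that one of these actually works is the step most likely to require genuine effort.
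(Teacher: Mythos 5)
Your proposal mirrors the paper's proof quite closely, and the details you were unsure about resolve exactly as you anticipated: the paper disposes of $X_0(13)\times X_{\mathrm{ns}}^+(5)$ (\mc{65.140.9.a.1}) by observing it has no points modulo $8$, finds a nonconstant map from $X_0(13)\times X_{S_4}(5)$ (\mc{65.70.4.a.1}) to a rank-$0$ elliptic curve via \texttt{ModCrvToEC}, and eliminates $X_0(13)\times X_{\mathrm{sp}}^+(7)$ by the quadratic-point descent you proposed, citing Theorem~1 of \cite{Vux91} for quadratic points on $X_0(91)$. The one place you overcomplicate matters is $\ell = 3$: the paper does not compute any fiber products there, but simply invokes Theorem~1.1 of \cite{rakvi2023classification3adicgalois}, whose classification already records that a non-CM curve with a rational $13$-isogeny has surjective $3$-adic image; your proposed rational-point computations would work but are unnecessary. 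Apart from that, the structure and methods are the same as the paper's.
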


\begin{proof} Part (1) follows from Table 19 of \cite{MR4868206}. Part (2) follows from Theorem 1.1.6 of \cite{MR4468989}. In the case that $\ell = 3$, part (3) follows from Theorem 1.1 of \cite{rakvi2023classification3adicgalois}. For primes $\ell > 37$, part (3) follows from Theorem 1.1 of \cite{MR3885140}. Thus, it only remains to verify part (3) for primes $\ell \le 37$. 

\emph{Proving $5$-adic surjectivity.} The fiber product  $X_0(13) \times X_{S_4}(5)$ (\mc{65.70.4.a.1}) has genus $4$ and rank $1$. It admits a map to a rank 0 elliptic curve, which we find using the code \texttt{ModCrvToEC}. In this way, we determine that the curve has two rational points, both of which are cusps. Next, the curve $X_0(13) \times X_0(5)$ (\mc{65.84.5.a.1}) has no non-CM rational points by Theorem \ref{thm:isogeniescomposite}. Lastly, the curve $X_0(13) \times X_{\mathrm{ns}}^+(5)$ (\mc{65.140.9.a.1}) has no rational points modulo $8$ and hence no rational points.

\emph{Proving $7$-adic surjectivity.} First, the fiber product $X_0(13) \times X_0(7)$ (\mc{91.112.7.a.1}) has no non-CM rational points by Theorem \ref{thm:isogeniescomposite}. Next, the fiber product $X_0(13) \times X_{\mathrm{ns}}^+(7)$ has no non-CM rational points by Lemma \ref{lemma:fiberproductwithXns+}. Lastly, suppose that $E/\Q$ corresponds to a rational point on the fiber product $X_0(13) \times X_{\mathrm{sp}}^+(7)$. Then there exists a quadratic field $K$ such that $E/K$  corresponds to a quadratic point on the fiber product $X_{\mathrm{sp}}(7) \times X_0(13)$. Therefore, $E/K$  corresponds to a quadratic point on the fiber product $X_{0}(7) \times X_0(13)$. Moreover, since $E$ is defined over $\Q$ its $j$-invariant lies in $\Q$. From Theorem $1$ of \cite{Vux91}, this is impossible.
        
\emph{Proving $\ell$-adic surjectivity for $\ell = 11$ and $17 \le \ell \le 37$.} For such primes $\ell$, $X_0(13) \times X_0(\ell)$ and $X_0(13) \times X_{\mathrm{ns}}^+(\ell)$ have no non-CM rational points by Theorem \ref{thm:isogeniescomposite} and Lemma \ref{lemma:fiberproductwithXns+}, respectively.

All of the computations can be verified in the file \texttt{13 isogeny l adic images.m} in the repository \cite{projectcode}.
\end{proof}

\section{A Product Formula for the Adelic Index}\label{s:adelicindex}

In this section, we give a product formula for the adelic index of a non-CM elliptic curve that admits a rational isogeny of degree $p \in \{2,3,5,7,13\}$, expressed in terms of the $6$-adic and $p$-adic commutator indices of the Galois image. The main statement is Proposition \ref{P:6-p}.

We begin with a result of Jones \cite{MR3350106} for checking equality of commutator subgroups of open subgroups of $\GL_2(\widehat{\Z})$. Recall that $G'\coloneqq[G, G]$ denotes the commutator subgroup of $G$. The commutator subgroup of an open subgroup of $\GL_2(\widehat{\Z})$ is necessarily open in $\SL_2(\widehat{\Z})$ by \cite[Lemma 7.10]{ZywinaOpen}.

\begin{theorem}[Theorem 2.7 and Remark 2.8, \cite{MR3350106}]\label{P:m0} Let $H$ and $G$ be open subgroups of $\GL_2(\widehat{\Z})$ such that $H \subseteq G$. Let $m$ be any multiple of the level of $G$ as a subgroup of $\GL_2(\widehat{\Z})$. Assume that each prime $\ell$ dividing $m$ satisfies $\ell \not\equiv \pm 1 \pmod{5}$. Set
\[
m_0 \coloneqq \lcm\left(2^3 \cdot 3^3, \prod_{\ell \mid m} \ell^{2\ord_\ell(m) + 1}\right),
\]
where $\ord_{\ell}(m)$ denotes the exact power of $\ell$ dividing $m$. Then, $H' = G'$ if and only if
\begin{enumerate}
	\item for each prime $\ell \nmid m_0$, one has $\SL_2(\F_\ell) \subseteq H(\bmod\,\ell)$, and
	\item one has $H(\bmod\,m_0)' = G(\bmod\,m_0)'$.
\end{enumerate}

\end{theorem}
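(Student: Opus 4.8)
\emph{The statement is an equivalence, and the forward implication is essentially formal.} Reduction modulo any $N$ is a continuous surjective homomorphism $\pi_N\colon\GL_2(\widehat{\Z})\to\GL_2(\Z/N\Z)$, so $\pi_N(K')=\pi_N(K)'$ for every closed subgroup $K$. Taking $N=m_0$ and $K\in\{H,G\}$ turns $H'=G'$ into $H(\bmod\,m_0)'=G(\bmod\,m_0)'$, which is (2). For (1): the level of $G$ divides $m$, and $m\mid m_0$, so any prime $\ell\nmid m_0$ satisfies $\ell\ge5$ and does not divide the level of $G$; hence $\pi_\ell(G)=\GL_2(\F_\ell)$, whose commutator subgroup is $\SL_2(\F_\ell)$ because $\ell\ge5$, so $\SL_2(\F_\ell)=\pi_\ell(G)'=\pi_\ell(G')=\pi_\ell(H')=\pi_\ell(H)'\subseteq\pi_\ell(H)$, giving (1). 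The content is thus the reverse implication, for which I would assemble two structural inputs.

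\emph{The two inputs.} The first is a \emph{commutator level bound}: if $K\subseteq\GL_2(\widehat{\Z})$ is open of level dividing an integer $M$ all of whose prime divisors are $\not\equiv\pm1\pmod5$, then $K'$ has level dividing $\lcm\!\big(2^3\cdot3^3,\ \prod_{\ell\mid M}\ell^{2\ord_\ell(M)+1}\big)$; equivalently, $K'$ contains the principal congruence subgroup of $\SL_2(\widehat{\Z})$ of that level. This is proved prime by prime from commutator estimates in the congruence filtration of $\SL_2(\Z_\ell)$: for $\ell\ge5$ and $a\ge1$ one has identities of the shape $[\Gamma_{\SL_2}(\ell^a),\GL_2(\Z_\ell)]=\Gamma_{\SL_2}(\ell^a)$ and $[\Gamma_{\SL_2}(\ell^a),\Gamma_{\SL_2}(\ell^a)]\supseteq\Gamma_{\SL_2}(\ell^{2a})$, which combine to produce the exponent $2\ord_\ell(M)+1$; the primes $2$ and $3$, where $\SL_2(\Z_\ell)$ has small nonabelian finite quotients, force the extra factor $2^3\cdot3^3$, and the hypothesis $\ell\not\equiv\pm1\pmod5$ is what keeps the estimate clean at a prime dividing $M$ (it excludes the stray $5$-power entanglement that would otherwise enlarge the bound). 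The second is a \emph{full lift lemma}: for $\ell\ge5$, a closed subgroup of $\SL_2(\Z_\ell)$ whose image in $\SL_2(\F_\ell)$ is all of $\SL_2(\F_\ell)$ equals $\SL_2(\Z_\ell)$.

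\emph{The reverse implication.} Assume (1) and (2). Since $H\subseteq G$ gives $H'\subseteq G'$, it suffices to prove $G'\subseteq H'$. By the commutator level bound applied to $G$ (legitimate, since the level of $G$ divides $m$ and every prime of $m$ is $\not\equiv\pm1\pmod5$), the level of $G'$ divides $m_0$, so $G'=\pi_{m_0}^{-1}\!\big(G(\bmod\,m_0)'\big)\cap\SL_2(\widehat{\Z})$. By (2), $G(\bmod\,m_0)'=H(\bmod\,m_0)'=\pi_{m_0}(H')$, whence $G'=\Gamma_{\SL_2}(m_0)\cdot H'$; thus $G'\subseteq H'$ is equivalent to the single statement $\Gamma_{\SL_2}(m_0)\subseteq H'$, which I would verify component by component. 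At a prime $\ell\nmid m_0$ (so $\ell\ge5$), condition (1) gives $\SL_2(\F_\ell)\subseteq\pi_\ell(H)$; as $\SL_2(\F_\ell)$ is perfect, $\pi_\ell(H)'\supseteq\SL_2(\F_\ell)$, so the $\ell$-adic projection of $H'$ surjects onto $\SL_2(\F_\ell)$ and equals $\SL_2(\Z_\ell)$ by the full lift lemma; a Goursat-type argument---using that the simple group $\mathrm{PSL}_2(\F_\ell)$ is not a quotient of $\SL_2(\Z_p)$ for any prime $p\ne\ell$---then places the whole $\ell$-component $\SL_2(\Z_\ell)$ inside $H'$. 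At a prime $\ell\mid m_0$ one instead runs the commutator estimates of the first input for $H$ itself: $H$ is open, hence contains a congruence subgroup at $\ell$, and condition (2) certifies that $H$ is large enough modulo $m_0$ for these estimates to fill in the intermediate layers and place $\Gamma_{\SL_2(\Z_\ell)}(\ell^{\ord_\ell(m_0)})$ inside $H'$. Assembling over all primes yields $\Gamma_{\SL_2}(m_0)\subseteq H'$, hence $H'=G'$.

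\emph{Main obstacle.} The technical heart is the first input: making the commutator level bound sharp so that it lands exactly on $m_0=\lcm\!\big(2^3\cdot3^3,\prod_{\ell\mid m}\ell^{2\ord_\ell(m)+1}\big)$---including the correct handling of the primes $2$ and $3$ and the precise role of the hypothesis $\ell\not\equiv\pm1\pmod5$---together with its second use, for $H$, at the primes dividing $m_0$. This rests on delicate commutator computations in the congruence filtrations of $\SL_2(\Z_\ell)$; it is here, rather than in the formal forward direction or the standard lifting and Goursat arguments, that one relies on the detailed analysis of \cite{MR3350106}.
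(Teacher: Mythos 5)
This statement is quoted verbatim from Jones (cited as Theorem 2.7 and Remark 2.8 of \cite{MR3350106}); the paper does not prove it, so there is no ``paper's own proof'' to compare your argument against. Assessed on its own terms, your outline is plausible but incomplete in exactly the places you would have to appeal to Jones's detailed analysis anyway.

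Your forward direction is correct and essentially formal. Reduction maps send commutator subgroups to commutator subgroups, so (2) follows from $H'=G'$ immediately; and since the level of $G$ divides $m$ and $m\mid m_0$, any $\ell\nmid m_0$ satisfies $\ell\ge5$ and $\pi_\ell(G)=\GL_2(\F_\ell)$, giving $\SL_2(\F_\ell)=\pi_\ell(G)'=\pi_\ell(H)'\subseteq\pi_\ell(H)$, which is (1).

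The reverse direction is where the sketch stops short. The reduction to $\Gamma_{\SL_2}(m_0)\subseteq H'$ is right, and your Goursat argument at the primes $\ell\nmid m_0$ (using $\SL_2(\F_\ell)\subseteq\pi_\ell(H)$, topological perfection of $\SL_2(\Z_\ell)$ for $\ell\ge5$, and that $\PSL_2(\F_\ell)$ is not a composition factor of $\prod_{p\ne\ell}\SL_2(\Z_p)$) can be made rigorous. But at the primes $\ell\mid m_0$ the step you propose --- applying the commutator level bound ``to $H$ itself'' --- is not directly available: the level of $H$ need not be supported only on primes $\not\equiv\pm1\pmod5$, so the hypothesis of that bound can fail for $H$ even though it holds for $G$. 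Saying that ``condition (2) certifies that $H$ is large enough'' is precisely the nontrivial content one must extract from Jones's commutator computations in the congruence filtration of $\SL_2(\Z_\ell)$; as written, that sentence names the gap rather than filling it. So the proposal is an honest road map, but the load-bearing estimates remain cited rather than proved.
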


We now record a lemma that will be used in the proof of Proposition \ref{P:6-p}. For a profinite group $G$, we write $\Quo(G)$ to denote the set of isomorphism classes of quotients of $G$, and we write $\rad(n)$ for the product of the distinct prime divisors of an integer $n$.

\begin{lemma} \label{L:CommonQuo}
    Let $p \geq 5$ be a prime. If $G \subseteq \GL_2(\Z_6)$ and $H \subseteq B_0(p^\infty)$ are open subgroups, then
    \[    \Quo(G') \cap \Quo(H') = \{1\}.    \]
\end{lemma}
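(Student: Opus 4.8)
The plan is to show that the commutator subgroup $G'$ of an open $G \subseteq \GL_2(\Z_6)$ and the commutator subgroup $H'$ of an open $H \subseteq B_0(p^\infty)$ are supported on disjoint sets of primes, so that any common quotient must be trivial. First I would observe that $G'$ is a closed subgroup of $\SL_2(\Z_6) = \SL_2(\Z_2) \times \SL_2(\Z_3)$, so every finite quotient of $G'$ has order divisible only by primes dividing $|\SL_2(\Z/6\Z)| = 24$, i.e., by $2$ and $3$. In particular, $\Quo(G') $ consists of groups whose order is a $\{2,3\}$-number (after checking that the profinite group $\SL_2(\Z_2) \times \SL_2(\Z_3)$, and hence any closed subgroup of it, has only pro-$\{2,3\}$ quotients — this uses that $\SL_2(\Z_\ell)$ is a pro-$\ell$ extension of $\SL_2(\F_\ell)$ and $|\SL_2(\F_2)| = 6$, $|\SL_2(\F_3)| = 24$).

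Next I would analyze $H' = [H,H]$ for $H \subseteq B_0(p^\infty)$. Since $H$ is open in $B_0(p^\infty)$, which sits inside $\GL_2(\Z_p)$, the group $H'$ is open in $[B_0(p^\infty), B_0(p^\infty)] \cap \SL_2(\Z_p)$, and in any case $H'$ is a closed subgroup of $\SL_2(\Z_p)$. Moreover, because $B_0(p^\infty)$ is solvable-by-(something small) — more precisely, the mod $p$ Borel $B_0(p)$ has commutator subgroup equal to the subgroup of unipotent upper-triangular matrices, which is a $p$-group — one gets that the image of $H'$ in $\GL_2(\F_p)$ is a $p$-group, and $H'$ is an extension of this $p$-group by a subgroup of the pro-$p$ group $\ker(\SL_2(\Z_p) \to \SL_2(\F_p))$. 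Hence $H'$ is a pro-$p$ group, so every element of $\Quo(H')$ is a finite $p$-group. Since $p \geq 5$, the only group that is simultaneously a $\{2,3\}$-group and a $p$-group is the trivial group, giving $\Quo(G') \cap \Quo(H') = \{1\}$.

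The main obstacle I anticipate is being careful about what ``quotient'' means here and making the pro-$p$ / pro-$\{2,3\}$ claims rigorous for \emph{closed subgroups} rather than the ambient groups: a closed subgroup of a profinite group $P$ need not inherit ``$P$ is pro-$S$'' as stated unless one argues at the level of the defining congruence filtrations. For $G'$ the cleanest route is: $G' \subseteq \SL_2(\Z_6)$, and $\SL_2(\Z_6)$ is topologically generated by finitely many elements with a filtration whose successive quotients are $2$-groups, $3$-groups, or subquotients of $\SL_2(\F_2) \times \SL_2(\F_3)$ — all $\{2,3\}$-groups — so every continuous finite quotient of any closed subgroup is a $\{2,3\}$-group (equivalently, $\SL_2(\Z_6)$ has no continuous surjection onto $\Z/\ell\Z$ for $\ell \geq 5$, and the same holds for closed subgroups by considering Frattini quotients / the fact that a pro-$\{2,3\}$ group's closed subgroups are pro-$\{2,3\}$). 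For $H'$ the analogous statement is that the closure of $[B_0(p^\infty), B_0(p^\infty)]$ is pro-$p$: indeed $B_0(p^\infty)/\text{(its commutator)}$ surjects onto $B_0(p)^{\mathrm{ab}}$, and one checks $[B_0(p^\infty), B_0(p^\infty)]$ lands in the kernel of reduction composed with the diagonal torus, which is pro-$p$; since $H \subseteq B_0(p^\infty)$ we get $H' \subseteq [B_0(p^\infty), B_0(p^\infty)]^{\mathrm{cl}}$, a pro-$p$ group, and closed subgroups of pro-$p$ groups are pro-$p$. Once both pro-support claims are in hand, the disjointness of $\{2,3\}$ and $\{p\}$ for $p \geq 5$ closes the argument immediately.
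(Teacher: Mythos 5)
Your proposal is correct and takes essentially the same approach as the paper: both proofs hinge on the observation that $G'$ only has $\{2,3\}$-primary finite quotients while $H'$ only has $p$-primary ones, so disjointness of $\{2,3\}$ and $\{p\}$ (using $p\ge 5$) forces any common quotient to be trivial. The paper packages this by computing $\rad(|\GL_2(\Z/6^k\Z)'|)=6$ and $\rad(|B_0(p^k)'|)=p$ at each finite level, whereas you phrase it via $G'$ being pro-$\{2,3\}$ and $H'$ being pro-$p$; these are mathematically equivalent framings of the same idea.
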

\begin{proof} 
    For all $k \geq 1$, we have that
\[
    \rad (|\GL_2(\Z/6^k \Z)'|) = 6 \quad \text{and} \quad 
    \rad (|B_0(p^k)'|) = p.
\]
Since $G(6^k)\subseteq \GL_2(\Z/6^k\Z)$, we also have $G(6^k)'\subseteq \GL_2(\Z/6^k\Z)'$. This gives us that
$|G(6^k)'|$ divides $|\GL_2(\Z/6^k\Z)'|$. Likewise, $H(p^k)\subseteq B_0(p^k)$ implies $|H(p^k)'|$ divides $|B_0(p^k)'|$. Now, suppose that $Q\in \Quo(G')\cap \Quo(H')$. Thus, $Q$ must be finite and its order must divide some power of $\gcd(6,p)$. Since $\gcd(6,p) = 1$, this means that $Q$ must be the trivial group.
\end{proof}

We next recall the fiber product construction for groups. Let $G_1$, $G_2$, and $Q$ be groups, and let  $\phi_1 \colon G_1 \to Q$ and $\phi_2 \colon G_2 \to Q$ be surjective group homomorphisms. The \emph{fiber product of $G_1$ and $G_2$ by $(\phi_1, \phi_2)$} is the group  
\[
G_1 \times_{(\phi_1,\phi_2)} G_2 \coloneqq 
\{(g_1, g_2) \in G_1 \times G_2 : \phi_1(g_1) = \phi_2(g_2)\}.
\]
This is a subgroup of $G_1 \times G_2$ that surjects onto both $G_1$ and $G_2$ via the usual projection maps.  
Goursat's lemma, stated below, gives that \emph{every} subgroup of $G_1 \times G_2$ surjecting onto both factors arises as a fiber product of $G_1$ and $G_2$.

\begin{lemma}[Goursat's Lemma]\label{goursat}  Let $G$, $G_1$, and $G_2$ be groups. If $G$ is a subgroup of $G_1 \times G_2$ that surjects onto $G_1$ and $G_2$ via the usual projection maps, then $G = G_1 \times_{(\phi_1,\phi_2)} G_2$ for some group $Q$ and surjective homomorphisms $\phi_1 \colon G_1 \to Q$ and $\phi_2 \colon G_2 \to Q$.
\end{lemma}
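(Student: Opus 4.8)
The plan is to follow the classical proof of Goursat's lemma by isolating the two relevant normal subgroups. First I would define
$N_1 \coloneqq \{g_1 \in G_1 : (g_1, e) \in G\}$ and $N_2 \coloneqq \{g_2 \in G_2 : (e, g_2) \in G\}$,
where $e$ denotes the identity in the appropriate factor. It is immediate that $N_1$ and $N_2$ are subgroups; the substantive point is that they are \emph{normal} in $G_1$ and $G_2$, respectively. To check $N_1 \trianglelefteq G_1$, fix $h \in G_1$; since $G$ surjects onto $G_1$, there is some $g_2 \in G_2$ with $(h, g_2) \in G$, and then for $n \in N_1$ one computes $(h, g_2)(n, e)(h, g_2)^{-1} = (h n h^{-1}, e) \in G$, so $h n h^{-1} \in N_1$. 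The argument for $N_2$ is symmetric. This is the one place where the hypothesis that $G$ surjects onto \emph{both} factors is genuinely used, and it is what I would be most careful to spell out.

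Next I would set $Q_1 \coloneqq G_1/N_1$, $Q_2 \coloneqq G_2/N_2$, and construct an isomorphism $\psi \colon Q_1 \to Q_2$. Given $g_1 N_1 \in Q_1$, choose $g_2 \in G_2$ with $(g_1, g_2) \in G$ (possible since $G$ surjects onto $G_1$) and declare $\psi(g_1 N_1) \coloneqq g_2 N_2$. To see this is well-defined, suppose $(g_1, g_2)$ and $(g_1', g_2')$ both lie in $G$ with $g_1 N_1 = g_1' N_1$; then $(g_1^{-1} g_1', g_2^{-1} g_2') \in G$ and $g_1^{-1} g_1' \in N_1$, so $(e, g_2^{-1} g_2') = (g_1^{-1}g_1', g_2^{-1}g_2')(g_1'^{-1}g_1, e) \in G$, giving $g_2^{-1} g_2' \in N_2$. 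A symmetric computation shows $\psi$ is a homomorphism with a two-sided inverse, hence an isomorphism. I would then set $Q \coloneqq Q_2$, let $\phi_2 \colon G_2 \to Q$ be the quotient map, and let $\phi_1 \colon G_1 \to Q$ be the composition of the quotient map $G_1 \to Q_1$ with $\psi$; both are surjective.

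Finally I would verify $G = G_1 \times_{(\phi_1,\phi_2)} G_2$. If $(g_1, g_2) \in G$ then by construction $\phi_2(g_2) = g_2 N_2 = \psi(g_1 N_1) = \phi_1(g_1)$, so $G \subseteq G_1 \times_{(\phi_1,\phi_2)} G_2$. Conversely, if $\phi_1(g_1) = \phi_2(g_2)$, pick $g_2' \in G_2$ with $(g_1, g_2') \in G$; then $g_2' N_2 = \phi_1(g_1) = g_2 N_2$, so $g_2^{-1} g_2' \in N_2$, hence $(e, g_2^{-1}g_2') \in G$, and therefore $(g_1, g_2') \cdot (e, (g_2')^{-1} g_2) = (g_1, g_2) \in G$. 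This yields the reverse inclusion and finishes the proof. There is no real obstacle here: the lemma is standard, and the only mildly delicate verification is the normality of $N_1$ and $N_2$ noted above.
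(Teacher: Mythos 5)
Your proof is correct and is the standard textbook argument for Goursat's lemma. The paper itself does not supply a proof---it merely cites references (Lang and Brau's thesis)---so your write-out is a self-contained version of exactly the argument those sources give: identify the normal subgroups $N_1$, $N_2$, show the induced bijection $G_1/N_1 \to G_2/N_2$ is an isomorphism, and recover $G$ as the fiber product over the common quotient. One minor point of rigor worth noting: when you invoke $(g_1'^{-1}g_1, e) \in G$ in the well-definedness check, you are implicitly using that $N_1$ is closed under inverses (which is fine since you already observed it is a subgroup); stating that explicitly would make the step airtight, but the logic is sound as written.
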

\begin{proof}
    See, \cite[p.\ 75]{MR1878556} or \cite[Sec.\ 1.2.2]{BrauThesis}, for instance.
\end{proof}

With these preliminaries, we turn to the product formula.

\begin{proposition} \label{P:6-p} Let $E/\Q$ be a non-CM elliptic curve that admits a rational isogeny of prime degree $p$. 
\begin{enumerate}
    \item If $p \in \{5, 7, 13\}$, then
    \[
    [\GL_2(\widehat{\Z}) : G_E] = \left[\SL_2(\Z_6) : G_E(6^\infty)'\right] \left[\SL_2(\Z_p) : G_E(p^\infty)' \right].
    \]
\item If $p \in \{2,3\}$ and the $\ell$-adic Galois representation of $E$ is surjective for all primes $\ell \geq 5$, then
    \[
    [\GL_2(\widehat{\Z}) : G_E] = \left[\SL_2(\Z_6) : G_E(6^\infty)'\right].
    \]
\end{enumerate}
\end{proposition}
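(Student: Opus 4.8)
The plan is to present $G_E$ as a subgroup of a direct product of prime‑power images, compare it to a carefully chosen open overgroup $G\supseteq G_E$ having the same commutator subgroup, and thereby express the adelic index as a product of $\SL_2$‑indices of commutators. First I would reduce to $\SL_2$: since $E/\Q$, the Weil pairing gives $\det G_E=\widehat{\Z}^\times$, so $G_E\cdot\SL_2(\widehat{\Z})=\GL_2(\widehat{\Z})$ and a short coset count yields $[\GL_2(\widehat{\Z}):G_E]=[\SL_2(\widehat{\Z}):G_E\cap\SL_2(\widehat{\Z})]$. Moreover $G_E\cap\SL_2(\widehat{\Z})=G_E'$: identifying $G_E$ with $\Gal(\Q(E[\infty])/\Q)$, the fixed field of $G_E'$ is the maximal subextension abelian over $\Q$, which by Kronecker--Weber lies in $\Q(\zeta_\infty)$ and, as $\Q(\zeta_\infty)\subseteq\Q(E[\infty])$, equals $\Q(\zeta_\infty)$; and $\Q(\zeta_\infty)$ is also the fixed field of $G_E\cap\SL_2(\widehat{\Z})=\ker(\det|_{G_E})$, because $\det\circ\rho_E$ is the cyclotomic character. (This identity $[\GL_2(\widehat{\Z}):G_E]=[\SL_2(\widehat{\Z}):G_E']$ is standard; cf.\ \cite{ZywinaOpen}.) Thus it suffices to compute $[\SL_2(\widehat{\Z}):G_E']$.

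Next, for $p\in\{5,7,13\}$ I would set $G:=\pi_{6^\infty}^{-1}(G_E(6^\infty))\cap\pi_{p^\infty}^{-1}(G_E(p^\infty))$; this is an open subgroup of $\GL_2(\widehat{\Z})$ containing $G_E$, and it decomposes as
\[
G=G_E(6^\infty)\times G_E(p^\infty)\times\prod_{\ell\neq 2,3,p}\GL_2(\Z_\ell),
\]
so (using that $\SL_2(\Z_\ell)$ is perfect for $\ell\geq 5$) $G'=G_E(6^\infty)'\times G_E(p^\infty)'\times\prod_{\ell\neq 2,3,p}\SL_2(\Z_\ell)$, and hence $[\SL_2(\widehat{\Z}):G']=[\SL_2(\Z_6):G_E(6^\infty)']\,[\SL_2(\Z_p):G_E(p^\infty)']$. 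It remains to show $G_E'=G'$, which I would deduce from Theorem \ref{P:m0} applied to the inclusion $G_E\subseteq G$. By the classifications of Section \ref{s:possibleimages}, $G_E(6^\infty)$ has level a power of $6$ and $G_E(p^\infty)$ has level a power of $p$, so every prime dividing the level of $G$ lies in $\{2,3,p\}$; since none of $2,3,5,7,13$ is $\equiv\pm1\pmod 5$, Theorem \ref{P:m0} applies with $m$ equal to the level of $G$, and the resulting modulus $m_0$ is divisible only by $2,3,p$. Condition (1) of Theorem \ref{P:m0}, that $\SL_2(\F_\ell)\subseteq G_E(\bmod\ell)$ for every $\ell\nmid m_0$, is exactly the $\ell$‑adic surjectivity established in Section \ref{s:possibleimages} for $\ell\notin\{2,3,p\}$.

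The crux is verifying condition (2) of Theorem \ref{P:m0}: that $G_E(\bmod m_0)'=G(\bmod m_0)'$. Writing $m_0=2^a3^bp^c$, both groups sit inside $\GL_2(\Z/2^a3^b\Z)\times\GL_2(\Z/p^c\Z)$; since $G$ is a direct product and $2^a3^b,p^c$ exceed the relevant levels, $G(\bmod m_0)'$ is the full product $G_E(\bmod 2^a3^b)'\times G_E(\bmod p^c)'$, whereas $G_E(\bmod m_0)'$ is an a priori smaller subgroup of it surjecting onto each factor. By Goursat's Lemma (Lemma \ref{goursat}), the deficiency is governed by a group $Q$ that is at once a quotient of $G_E(\bmod 2^a3^b)'$ — hence of $G_E(6^\infty)'$ — and of $G_E(\bmod p^c)'$ — hence of $G_E(p^\infty)'$. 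Because $E$ admits a rational $p$‑isogeny, $G_E(p^\infty)$ is conjugate into $B_0(p^\infty)$, so Lemma \ref{L:CommonQuo} (applicable as $p\geq 5$) forces $Q=1$; thus condition (2) holds, $G_E'=G'$, and part (1) follows. For part (2), where $p\in\{2,3\}$, the same scheme applies with $G:=\pi_{6^\infty}^{-1}(G_E(6^\infty))=G_E(6^\infty)\times\prod_{\ell\geq5}\GL_2(\Z_\ell)$: condition (1) uses the assumed $\ell$‑adic surjectivity for all $\ell\geq5$, condition (2) is automatic because $G(\bmod m_0)=G_E(\bmod m_0)$, and one concludes $[\GL_2(\widehat{\Z}):G_E]=[\SL_2(\Z_6):G_E(6^\infty)']$.

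I expect the main obstacle to be precisely that last point: ruling out a nontrivial Goursat ``glue'' between the $6$‑adic and $p$‑adic parts of the commutator subgroup. This is where the rational $p$‑isogeny is genuinely used — it forces $G_E(p^\infty)$ into a Borel, and Lemma \ref{L:CommonQuo} then exploits the coprimality $\gcd(6,p)=1$ of the radicals of the orders of the relevant commutator subgroups. A secondary point requiring care is to keep the Jones modulus $m_0$ supported only on $\{2,3,p\}$, which is what allows Theorem \ref{P:m0} to be invoked at all and reduces its condition (1) to the surjectivity statements already proved; this rests on the explicit level information supplied by the classifications of Section \ref{s:possibleimages}.
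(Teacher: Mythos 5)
Your proposal is correct and follows essentially the same route as the paper's proof: reduce to $\SL_2$ via $[\GL_2(\widehat{\Z}):G_E]=[\SL_2(\widehat{\Z}):G_E']$, pass to an open overgroup $G$ whose commutator visibly factors, invoke Jones's Theorem \ref{P:m0} to identify $G_E'$ with $G'$, and kill the Goursat gluing group via Lemma \ref{L:CommonQuo} using the rational $p$-isogeny and $\gcd(6,p)=1$. The only (cosmetic) deviation is that you take $G$ to be the direct product $\pi_{6^\infty}^{-1}(G_E(6^\infty))\cap\pi_{p^\infty}^{-1}(G_E(p^\infty))$ and apply Goursat while verifying condition (2) of Theorem \ref{P:m0}, whereas the paper takes $G=\pi_m^{-1}(G_E(m))$ (for $m$ chosen to also dominate the level of the commutator) and applies Goursat to $G_E(m^\infty)'$ after invoking the theorem; this slightly restructures but does not change the argument.
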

\begin{proof} By  \cite[Lemma 1.7]{ZywinaOpen}, we have that
\begin{equation}\label{SL2indx}
      [\GL_2(\widehat{\Z}) : G_E] = [\SL_2(\widehat{\Z}) : G_E']. 
\end{equation}
 
Thus, it suffices to determine the commutator index of the adelic image. We now prove part (1) of the proposition. The proof of part (2) follows similarly.  Assume that $p \in \{5, 7, 13\}$. By Propositions \ref{P:5isog-elladic}, \ref{P:7isog-elladic}, and \ref{P:13isog-elladic}, we have that the $\ell$-adic Galois representation of $E$ is surjective for all primes $\ell \not\in \{2,3,p\}$. Let $m$ be the least common multiple of the level of $G_E(2^\infty\cdot3^\infty\cdot p^\infty)$ as a subgroup of $\GL_2(\widehat{\Z})$ and the level of $G_E(2^\infty \cdot 3^\infty \cdot p^\infty)'$ as a subgroup of $\SL_2(\widehat{\Z})$. Let $m_0$ be as in Theorem \ref{P:m0}. Taking $G$ to be the full preimage of $G_E(m)$ in $\GL_2(\widehat{\Z})$ and $H$ to be $G_E$, we see that the hypotheses of Theorem \ref{P:m0} are satisfied. Applying the theorem, we obtain
    \begin{equation} \label{E:ComProd2}
    G' = G_E'.
    \end{equation}
    Since the level of $G$ divides $m$, up to the natural isomorphism we have that
    \[
    G = G_m \times \GL_2(\Z_{(m)}) \subseteq \GL_2(\Z_m) \times \GL_2(\Z_{(m)})
    \]
    where $G_m$ denotes the image of $G$ in $\GL_2(\Z_m)$ and $\Z_{(m)} = \prod_{\ell \nmid m} \Z_\ell$. Thus,
    \begin{equation} \label{E:ComProd3}
    G' =(G_m \times \GL_2(\Z_{(m)}))' 
    = G_m' \times \GL_2(\Z_{(m)})'.
    \end{equation}
    Next, we observe from $G_m = G_E(m^\infty)$ that
    \begin{equation} \label{E:ComProd4}
        G_m' = G_E(m^\infty)'.
    \end{equation}
    Identifying $G_E(m^\infty)$ as a subgroup of $\GL_2(\Z_6) \times \GL_2(\Z_p)$, we have that
    \[
    G_E(m^\infty) \subseteq G_E(6^\infty) \times G_E(p^\infty).
    \]
    Thus
    \[
    G_E(m^\infty)' \subseteq G_E(6^\infty)' \times G_E(p^\infty)'.
    \]
    Furthermore, $G_E(m^\infty)'$ surjects onto both of the factors above by the natural projection maps. Thus, by Goursat's lemma (Lemma \ref{goursat}) we have that
    \begin{equation}
    G_E(m^\infty)'
    = G_E(6^\infty)' \times_{(\phi_1, \phi_2)} G_E(p^\infty)' ,\label{E:com-m6ell}
    \end{equation}
    where $\phi_1 \colon G_E(6^\infty)' \to Q$ and $\phi_2 \colon G_E(p^\infty)' \to Q$ are surjective group homomorphisms onto a group $Q$. By Lemma \ref{L:CommonQuo}, it must be that $Q = \{1\}$, so \eqref{E:com-m6ell} is actually a direct product. In symbols,
    \[
    G_E(m^\infty)'
    = G_E(6^\infty)' \times  G_E(p^\infty)'.
    \]
    Thus,
    \[
    [\SL_2(\Z_m) : G_E(m^\infty)']
    = [\SL_2(\Z_6) : G_E(6^\infty)'] [\SL_2(\Z_p) : G_E(p^\infty)'].
    \]
    Combining the above equation with  \eqref{SL2indx}, \eqref{E:ComProd2}, 
     \eqref{E:ComProd3}, and \eqref{E:ComProd4}, we obtain the desired result.
\end{proof}

\section{Handling the Primes \texorpdfstring{$p = 11, 17$ and $37$}{p = 11, 17, and 37}}\label{s:11and17and37}

The rational points of $X_0(p)$ for $p \in \{11,17,37\}$ are well known in the literature. See page 79 of \cite{MR376533} for $p \in \{11,17\}$ and Section 5, Proposition 2 of \cite{MR354674} for $p=37.$ In this section, we list all of the non-CM $j$-invariants that arise and verify the adelic index for each lies in $\mathcal{I}_p$. 
\begin{itemize}
    \item For $X_0(11)$ (\mc{11.12.1.a.1}), there are exactly five rational points, of which two are non-CM. Their $j$-invariants are $-11^2$ and $ -11 \cdot 131^{3}$. In both cases, the adelic index is $480$. 
    \item For $X_0(17)$ (\mc{17.18.1.a.1}), there are exactly four rational points, of which two are non-CM. Their $j$-invariants are $ -2^{-1} \cdot 17^{2} \cdot 101^{3}$ and $ -2^{-17} \cdot 17 \cdot 373^{3}$. In both cases, the adelic index is $576$.
    \item For $X_0(37)$ (\mc{37.38.2.a.1}), there are exactly four rational points, of which two are non-CM. Their $j$-invariants are $-7 \cdot 11^{3}$ and $-7 \cdot 137^{3} \cdot 2083^{3}$. In both cases, the adelic index is $2736$. 
\end{itemize}

In all three cases, the only adelic indices correspond to the sets $\mathcal{I}_{11},\mathcal{I}_{17}$, and $\mathcal{I}_{37}$ of Theorem \ref{t:mainthm}.

\section{Handling the Primes \texorpdfstring{$p = 5, 7,$ and $13$}{p = 5, 7, and 13}}\label{s:5and7and13}

\subsection{The Lattice Construction} \label{S:LatCons}

For the primes $p \in \{5, 7, 13\}$, we determine many $6$-adic images 
\[
G_E(6^\infty) \subseteq \GL_2(\Z_6)
\]
that contain $-I$ and occur for some non-CM elliptic curve $E/\Q$ admitting a rational $p$-isogeny. Together with Proposition \ref{P:6-p} and the results of Section \ref{s:possibleimages}, this will be sufficient for us to determine all adelic indices for non-CM elliptic curves over $\Q$ admitting a rational $p$-isogeny for each $p \in \{5,7,13\}$.

Our approach follows a lattice-based strategy, drawing inspiration from  the articles \cite{MR3500996,MR4468989,ZywinaOpen}. Fix a prime $p \in \{5, 7, 13\}$ and a subgroup $K \subseteq \GL_2(\Z_p)$ that contains $-I$ and arises as the $p$-adic Galois image for infinitely many $\overline{\Q}$-isomorphism classes of non-CM elliptic curves over $\Q$ admitting a rational $p$-isogeny. We construct a finite lattice $\mathcal{L}_p(K)$, consisting of certain open subgroups of $\GL_2(\Z_6)$, recursively as follows:

\begin{enumerate}
    \item The group $\GL_2(\Z_6)$ is in $\mathcal{L}_p(K)$. 
    \item If $H_0 \in \mathcal{L}_p(K)$ is such that the fiber product of modular curves $X_{H_0} \times X_K$ has infinitely many rational points, then each maximal open subgroup $H$ of $H_0$ satisfying all of the following conditions is also included in $\mathcal{L}_p(K)$:
    \begin{itemize}
        \item $-I \in H$ and $\det H = \Z_6^\times$,
        \item The image of $H$ under the projection $\GL_2(\Z_6) \to \GL_2(\Z_2)$ is the $2$-adic image (up to conjugacy) of some non-CM elliptic curve over $\Q$ admitting a rational $p$-isogeny, and
        \item The image of $H$ under the projection $\GL_2(\Z_6) \to \GL_2(\Z_3)$ is the $3$-adic image (up to conjugacy) of some non-CM elliptic curve over $\Q$ admitting a rational $p$-isogeny.
    \end{itemize}
\end{enumerate} 

To enumerate all maximal open subgroups of a given open subgroup of $\GL_2(\Z_6)$, we use Corollary \ref{L:Zyw-Lemma7.3}, which allows us to work at a finite (and computationally tractable) level. It is straightforward to check the conditions in the first bullet point. For the second and third bullet points, we use the results of Section \ref{s:possibleimages}:  Propositions \ref{P:5isog-elladic}, \ref{P:7isog-elladic}, and \ref{P:13isog-elladic} for $p=5, 7,$ and $13$ respectively.

Further, we define the sublattices
\begin{align*} 
\mathcal{L}_p^{\mathrm{fin}}(K) &\coloneqq \{ H \in \mathcal{L}_p(K) : |(X_H \times X_K)(\Q)| < \infty \} \\
\mathcal{L}_p^{\mathrm{inf}}(K) &\coloneqq \{ H \in \mathcal{L}_p(K) : | (X_H \times X_K)(\Q)| = \infty \}.
\end{align*}
We compute rational points on the modular curves attached to those groups in $\mathcal{L}_p^{\mathrm{fin}}(K)$ that are maximal within this sublattice  (i.e., not contained up to conjugacy in any other group of $\mathcal{L}_p^{\mathrm{fin}}(K)$). Determining the rational points on these modular curves gives all $6$-adic images containing $-I$ that occur for only finitely many $\overline{\Q}$-isomorphism classes of non-CM elliptic curves over $\Q$ with $p$-adic image contained in $K$ (up to conjugacy). Such $6$-adic images containing $-I$ that occur infinitely often are a subset of the groups in $\mathcal{L}_p^{\mathrm{inf}}(K)$ (perhaps a proper subset due to the phenomenon of ``curious groups'' \cite{chiloyan2023classification}, but this will not matter for our purposes).

The \texttt{Magma} code for constructing the lattices described above (for a given $p$ and $K$) appears in the file \texttt{Approach\textunderscore Lattice.m} in our GitHub repository. The files \texttt{Case p = 5.m}, \texttt{Case p = 7.m}, and \texttt{Case p = 13.m} call this code with the relevant prime $p$ and subgroups $K$ in each case.

\subsection{Rational Isogenies of Degree \texorpdfstring{$5$}{5}}\label{s:rationalisogeniesdegree5}

Let $p = 5$ and $K \subseteq \GL_2(\Z_5)$ be a subgroup containing $-I$ that appears as the $5$-adic image of
some non-CM elliptic curve over $\Q$ admitting a rational $5$-isogeny. By Proposition \ref{P:5isog-elladic}, there are eight possibilities for $K$ up to conjugacy. For each of these, we compute the commutator index in the \texttt{Magma} file \texttt{Case p = 5.m}, finding that
\[[\SL_2(\Z_5) : K'] \in \{24, 600\}.\] 
Because $K$ arises from an elliptic curve admitting a rational $5$-isogeny, $K$ is necessarily conjugate to a subgroup of \mc{5.6.0.a.1} ($X_0(5)$). In \texttt{Case p = 5.m}, we verify that if $[\SL_2(\Z_5) : K'] = 600$, then $K$ is conjugate to a subgroup of either \mc{5.30.0.a.1} ($X_{\mathrm{sp}}(5)$) or \mc{25.30.0.a.1} ($X_0(25)$). Thus, to determine the adelic index of any non-CM elliptic curve over $\Q$ admitting a rational $5$-isogeny, it suffices to determine the possible $6$-adic images that can occur along with a $5$-adic image of \mc{5.6.0.a.1} ($X_0(5)$), \mc{5.30.0.a.1} ($X_{\mathrm{sp}}(5)$), or \mc{25.30.0.a.1} ($X_0(25)$). We address each of these now.

\subsubsection{The lattice $\mathcal{L}_5(X_0(5))$}

We begin by studying the lattice $\mathcal{L}_5(X_0(5))$ associated with the group \mc{5.6.0.a.1} ($X_0(5)$). In the file \texttt{Case p = 5.m}, we construct the lattice $\mathcal{L}_5(X_0(5))$  described in Section \ref{S:LatCons} and depicted in Figure \ref{f:x0(5)}. The nodes in the diagram represent the groups contained in $\mathcal{L}_5(X_0(5))$. The white nodes are those in $\mathcal{L}_5^{\mathrm{inf}}(X_0(5))$, the gray nodes are maximal within $\mathcal{L}_5^{\mathrm{fin}}(X_0(5))$, and the black nodes are non-maximal within $\mathcal{L}_5^{\mathrm{fin}}(X_0(5))$. 

\begin{figure}[H] 
{\includegraphics[width=\textwidth]{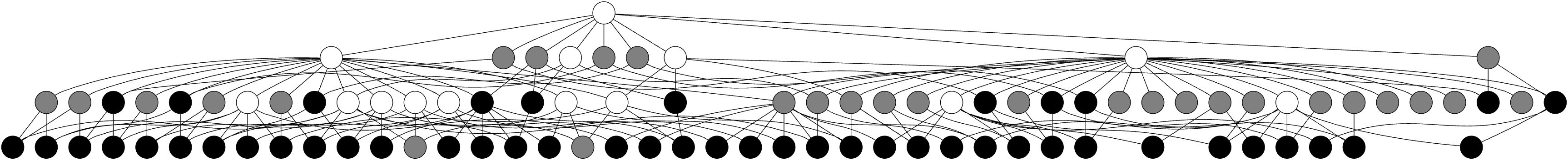}}\caption{The lattice $\mathcal{L}_5(X_0(5))$}\label{f:x0(5)}
\end{figure} 

We begin by considering the sublattice $\mathcal{L}_5^{inf}(X_0(5))$. We have that $|\mathcal{L}_5^{\mathrm{inf}}(X_0(5))| = 14$. By computing the commutator index for each of these $14$ groups, we find that
\[
\{ [\SL_2(\Z_6) : H'] : H \in \mathcal{L}_5^{\mathrm{inf}}(X_0(5)) \} = \{2, 12\}
\]
Therefore, in the case where the $5$-adic commutator index is $24$: If $E/\Q$ admits a rational $5$-isogeny and has a quadratic twist whose $6$-adic image lies in $\mathcal{L}_5^{\mathrm{inf}}(X_0(5))$ up to conjugacy, then 
\[ [\GL_2(\widehat{\Z}) : G_E] \in \{48, 288\} \subseteq \mathcal{I}_5. \]

It remains to consider the groups coming from the sublattice $\mathcal{L}_5^{\mathrm{fin}}(X_0(5))$. To do so, we need only determine the rational points on the modular curves associated with the $29$ gray (maximal finite) nodes. Of these, $23$ correspond to modular curves of genus $1$:
\begin{align*}
    &\mc{15.24.1.a.1}, \mc{15.36.1.b.1}, \mc{20.24.1.g.1}, \mc{30.12.1.d.1}, \mc{30.36.1.q.1}, \mc{30.36.1.r.1}, \\
    &\mc{60.12.1.j.1}, \mc{60.36.1.do.1},
    \mc{60.36.1.dp.1},
    \mc{60.36.1.dr.1},
    \mc{60.36.1.ga.1}, 
    \mc{60.36.1.gb.1},  \\
    & \mc{60.36.1.w.1},  \mc{60.36.1.x.1},
    \mc{120.12.1.bl.1},
    \mc{120.36.1.cw.1}, \mc{120.36.1.lm.1},
    \mc{120.36.1.ln.1},
      \\ &\mc{120.36.1.lo.1},
      \mc{120.36.1.lp.1}, 
      \mc{120.36.1.lq.1},
      \mc{120.36.1.sm.1},
         \text{ and }\mc{120.36.1.so.1}. 
\end{align*}

 Each of these curves either has no rational points or is an elliptic curve of rank 0. We compute all rational points and determine which are non-CM using the LMFDB. For each of these, we check the adelic index of an elliptic curve over $\Q$ with the corresponding $j$-invariant. These are listed in Table \ref{tab:g1jinvs}. Doing so, we find that all adelic indices coming from these genus 1 modular curves are among 
 \begin{equation*}
 \{48,192,288,384,576,768\}\subseteq\mathcal{I}_5.
 \end{equation*}

\begin{table}[H]
    \centering
    \begin{tabular}{|c|c|}
    \hline
        \textbf{LMFDB Label} & \textbf{Non-CM $j$-invariants} \\ \hline 
         \mc{15.24.1.a.1}  & $\frac{-5^2}{2}, \frac{-5\cdot29^3}{2^5}, \frac{5\cdot211^3}{2^{15}}, \frac{-5^2\cdot241^3}{2^3}$ \\ \hline
\mc{15.36.1.b.1} & $\frac{11^3}{2^3},\frac{-29^3\cdot41^3}{2^{15}}$  \\ \hline
\mc{20.24.1.g.1}  & $\frac{5\cdot59^3}{2^{10}},\frac{-5^2\cdot41^3}{2^2}$  \\ \hline
\mc{30.12.1.d.1} & $\frac{2^{12}\cdot5}{3^5},\frac{-2^{12}\cdot5^2}{3}$ \\ \hline
\mc{30.36.1.q.1} & $2^{12},2^{12}\cdot211^3$  \\ \hline
    \end{tabular}
    \caption{Non-CM $j$-invariants for rank 0 elliptic curves in $\mathcal{L}_5^{\mathrm{fin}}(X_0(5))$}
    \label{tab:g1jinvs}
\end{table}

Two of the curves are of genus $2$: \mc{30.54.2.a.1} and \mc{45.54.2.c.1}. Their  Jacobians have rank $0$ and $1$, respectively. Using the \texttt{Chabauty0} command in \texttt{Magma}, we find that \mc{30.54.2.a.1} has exactly four rational points, all of which are cusps. For \mc{45.54.2.c.1}, we find a degree $0$ divisor of infinite order and then use the \texttt{Chabauty} command to conclude that there are exactly four rational points, among which two are non-CM. Their $j$-invariants are $\{2^6,-2^6\cdot719^3\}$. The adelic index of both of these points is $864$, which lies in $\mathcal{I}_5$.

Finally, the remaining four curves are of genus $3$: \mc{120.36.3.c.1}, \mc{120.36.3.a.1}, \mc{120.72.3.bee.1}, and \mc{120.72.3.ka.1}. The first two cover \mc{24.6.1.c.1} and \mc{24.6.1.a.1}, respectively, which are elliptic curves of rank $0$. Each of these elliptic curves has exactly two rational points: one is a cusp and the other is CM. This means there are no non-CM rational points on  \mc{120.36.3.c.1} and \mc{120.36.3.a.1}. The latter two curves map to the bielliptic curve given by the equation \[y^2+y=-2x^6+9x^2-7\] which, by Table \ref{tab:biellipticchabauty}, has four rational points. On both \mc{120.72.3.bee.1} and \mc{120.72.3.ka.1}, these four points lift to eight rational points. The $j$-invariants of the non-CM rational points of \mc{120.72.3.bee.1} are $\{\frac{2^6\cdot11^3}{3}, \frac{2^6\cdot971^3}{3^5}\}$. The $j$-invariants of the non-CM rational points of \mc{120.72.3.ka.1} are $\frac{64}{9}$ and $\frac{-2^6\cdot239^3}{3^{10}}$. The adelic index of any elliptic  curve over $\Q$ with one of these $j$-invariants is $288$, which appears in $\mathcal{I}_5$. This completes our analysis of $\mathcal{L}_5(X_0(5))$.

\subsubsection{The lattice $\mathcal{L}_5(X_{sp}(5))$}
In contrast to the case of $X_0(5)$, the lattice associated with $X_{\mathrm{sp}}(5)$ is rather simple. We construct this lattice in the file \texttt{Case p = 5.m}.

\begin{figure}[H] 
\centering
{\includegraphics[height=.65in]{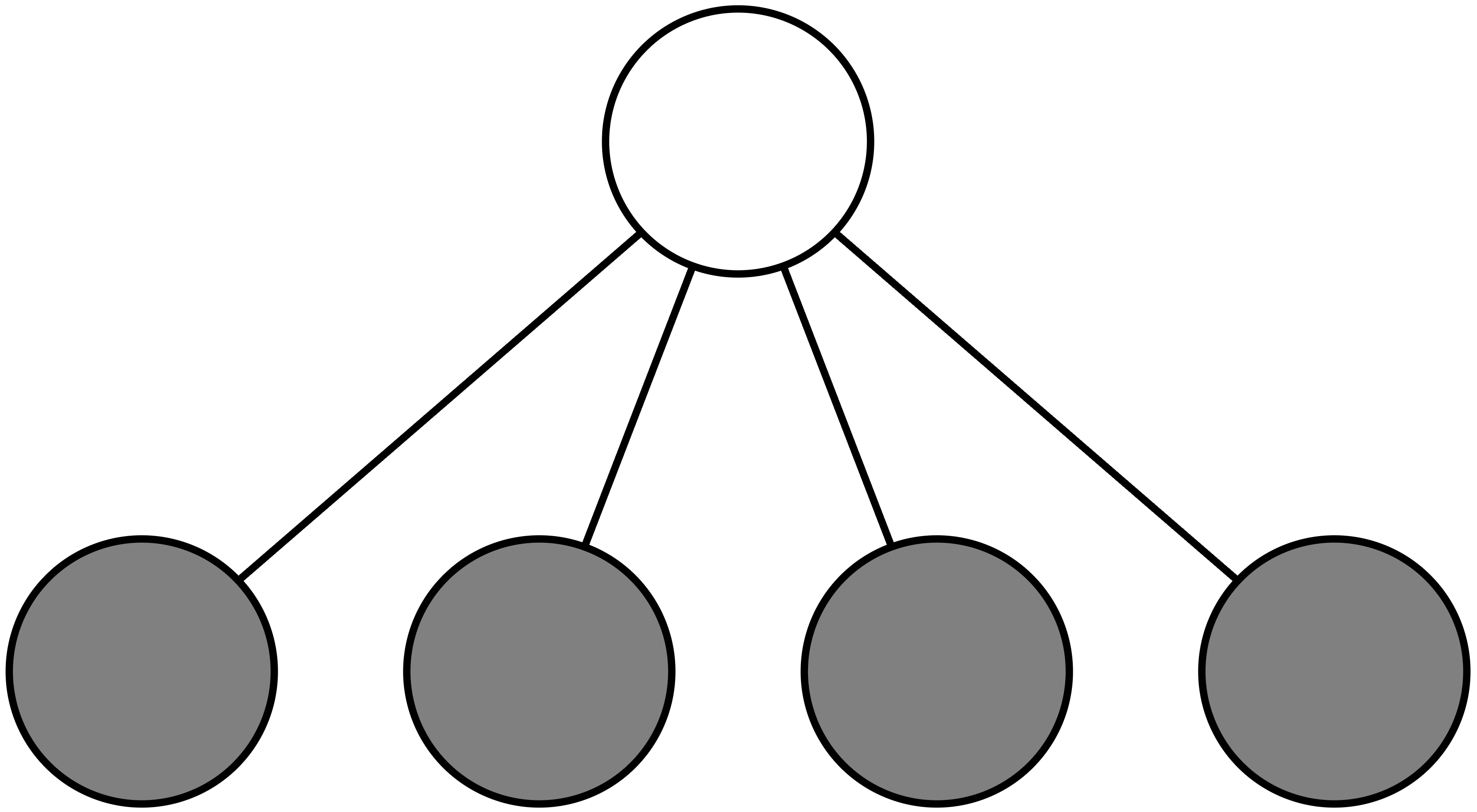}}\caption{The lattice $\mathcal{L}_5(X_{\mathrm{sp}}(5))$}
\end{figure}

Again, we begin by considering the one node that constitutes $\mathcal{L}_5^{\mathrm{inf}}$. We find that $\mathcal{L}_5^{\mathrm{inf}}(X_{\mathrm{sp}}(5)) = \{\GL_2(\Z_6)\}$. Thus, the only adelic index arising from the sublattice $\mathcal{L}_5^{\mathrm{inf}}(X_{\mathrm{sp}}(5))$ by Proposition \ref{P:6-p} is
\[
[\GL_2(\widehat{\Z}) : G_E] = 2 \cdot 600 = 1200 \in \mathcal{I}_5.
\]

We now consider the four groups in $\mathcal{L}_5^{\mathrm{fin}}(X_{\mathrm{sp}}(5))$.  The associated modular curves are \mc{30.60.3.c.1}, \mc{60.60.3.h.1}, \mc{120.60.3.z.1}, and \mc{120.60.3.bf.1}.

The modular curve \mc{30.60.3.c.1} covers \mc{30.12.1.d.1} which is an elliptic curve of rank 0 and has exactly two non-CM $j$-invariants: $\{\tfrac{2^{12}\cdot5}{3^5},\tfrac{-2^{12}\cdot5^2}{3}\}$. We check that if an elliptic curve over $\Q$ has one of these $j$-invariants, then the index of its $5$-adic image must be either 24 or 12. The 5-adic image cannot be a subgroup of $X_{\mathrm{sp}}(5)$. Hence, there are no non-CM rational points of \mc{30.60.3.c.1}. 

The modular curve \mc{60.60.3.h.1} covers \mc{60.12.1.j.1}, which is an elliptic curve of rank 0 and has exactly two rational points. Both are cusps, so there are no non-CM rational points of \mc{60.60.3.h.1}.

The modular curve \mc{120.60.3.z.1} is the fiber product of \mc{24.2.0.a.1} and \mc{5.30.0.a.1}. It is a genus $3$ curve that covers the modular curve \mc{120.30.2.e.1}, which has a hyperelliptic model 
\[ y^2 = -12x^5 - 18x^4 + 63x^3 - 18x^2 - 12x.\]
This curve is bielliptic of genus $2$ and rank $2$, so we can compute its rational points using quadratic Chabauty and the Mordell--Weil sieve (see Section \ref{s:quadraticchabauty}). This yields one cusp and one CM point.

The modular curve \mc{120.60.3.bf.1} is the fiber product of $X_{\mathrm{sp}}(5)$ and \mc{24.2.0.b.1}. Immediately, it covers the fiber product of $X_0(5)$ and \mc{24.2.0.b.1}, which is labeled \mc{120.12.1.bl.1} and is an elliptic curve of rank $0$. It has only two rational points, both of which are cusps. Hence, the only rational points of \mc{120.60.3.bf.1} are cusps.

\subsubsection{The lattice $\mathcal{L}_5(X_0(25))$} Finally, we consider the lattice associated with $X_{0}(25)$. It has the same tree structure as in the previous case and is shown in Figure \ref{tab:Xsp5}. Again, this lattice is constructed in the file \texttt{Case p = 5.m}. 

\begin{figure}[H] 
\centering
{\includegraphics[height=.65in]{Tree_X_sp_5__and_X_0_25_.png}}\caption{The lattice $\mathcal{L}_5(X_0(25))$}\label{tab:Xsp5}
\end{figure} 

Since $\mathcal{L}_5^{\mathrm{inf}}(X_{0}(25)) = \{\GL_2(\Z_6)\}$, the only adelic index coming from the sublattice $\mathcal{L}_5^{\mathrm{inf}}(X_{0}(25))$ is
\[
[\GL_2(\widehat{\Z}) : G_E] = 2 \cdot 600 = 1200 \in \mathcal{I}_5.
\]

We now consider the four groups in $\mathcal{L}_5^{\mathrm{fin}}(X_{0}(25))$, all of which are maximal, as seen in the lattice diagram. The associated modular curves are  \mc{150.60.3.a.1}, \mc{300.60.3.a.1}, \mc{24.2.0.a.1} $\times$ \mc{25.30.0.a.1}, and \mc{24.2.0.b.1} $\times$ \mc{25.30.0.a.1}. Note that the latter two curves do not yet appear in the LMFDB Beta, so we instead write them as products.

The modular curve \mc{150.60.3.a.1} is the fiber product of $X_0(25)$ and \mc{6.2.0.a.1}. Suppose, toward a contradiction, there exists a non-CM elliptic curve $E/\Q$ that corresponds to a rational point of \mc{150.60.3.a.1}. From Lemma \ref{lemma:isomorphicimages}, if $E'/\Q$ is such that there exists a $5$-isogeny between $E$ and $E'$, then $G_E(6)$ is conjugate to $G_{E'}(6)$. Since $E$ admits a $25$-isogeny, there must exist some non-CM elliptic curve $E'/\Q$ which is $5$-isogenous to $E$ that gives rise to a rational point on the fiber product of $X_{\mathrm{sp}}(5)$ and \mc{6.2.0.a.1}. This fiber product is \mc{30.60.3.c.1}. We have already concluded that there are no non-CM rational points of \mc{30.60.3.c.1}, so there cannot be any non-CM rational points of \mc{150.60.3.a.1}. A similar argument applies for the other three modular curves as well. With this, we have shown that there are no non-CM rational points on any of these four curves. 

This completes the proof of Theorem \ref{t:mainthm} in the case of $p = 5$. All of the rational point computations are verified in the file \texttt{Section 7.2.m} in the repository \cite{projectcode}.

\subsection{Rational Isogenies of Degree \texorpdfstring{$7$}{7}}\label{s:rationalisogeniesdegree7}

Next, we consider the case  $p=7$. The four $7$-adic images from Proposition \ref{P:7isog-elladic} each have a commutator index of $48$ inside $\SL_2(\Z_7)$. Thus, unlike the case of $p = 5$, we need only consider a single lattice, namely the lattice associated with $X_0(7)$.  

\begin{figure}[H] 
\centering
{\includegraphics[height=1.2in]{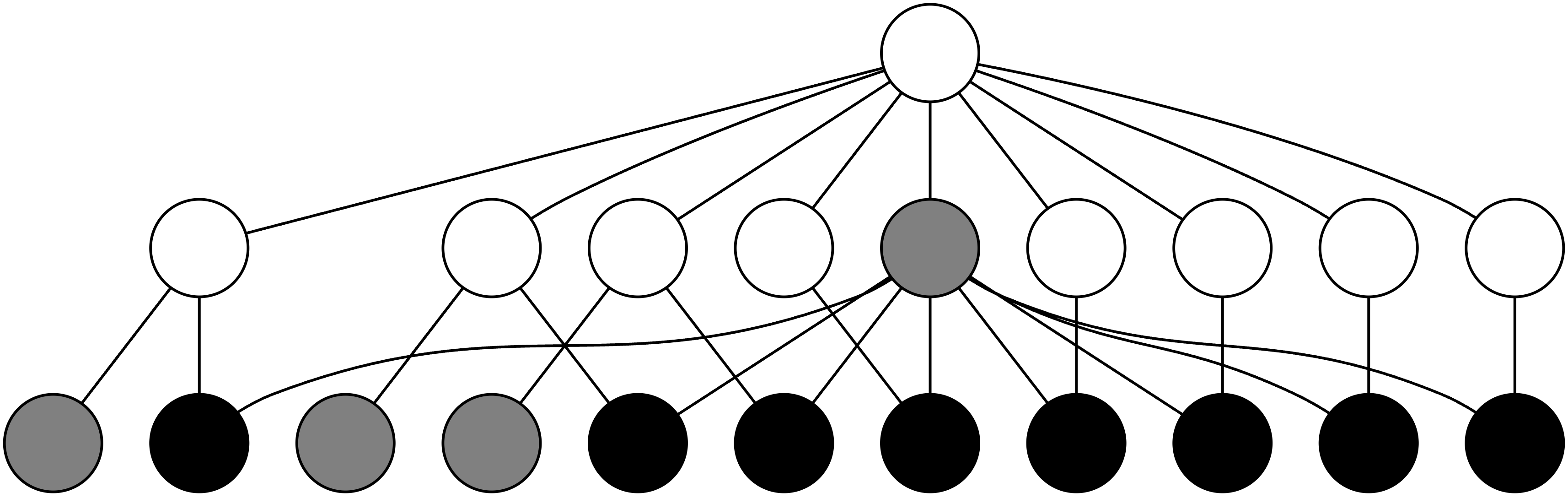}}\caption{The lattice $\mathcal{L}_7(X_0(7))$}
\end{figure}

The coloring scheme (perhaps it would be more accurate to say ``shading scheme'') of the nodes is the same as in the previous section. We have that $|\mathcal{L}_7^{\mathrm{inf}}(X_0(7))| = 9$. By computing the commutator index for each of these groups, we find that
\[
\{ [\SL_2(\Z_6) : H'] : H \in \mathcal{L}_7^{\mathrm{inf}}(X_0(7)) \} = \{2, 4, 12\}.
\]
Thus, the possible adelic indices associated with these nodes in the sublattice $\mathcal{L}_7^{\mathrm{inf}}(X_0(7))$ are
\[ [\GL_2(\widehat{\Z}) : G_E] \in \{96, 192, 576\} \subseteq \mathcal{I}_7. \]

It remains to study the modular curves associated with the four gray nodes; these are maximal within $\mathcal{L}_7^{\mathrm{fin}}(X_0(7))$. Their labels are \mc{21.32.1.a.1}, \mc{28.64.3.b.1}, \mc{42.48.2.d.1}, and \mc{126.48.2.a.1}. 

The rational points on \mc{21.32.1.a.1}, $X_0(21)$, are well known. They give four non-CM $j$-invariants:
\[ \{\tfrac{3^3\cdot5^3}{2}, \tfrac{-3^2\cdot5^6}{2^3}, \tfrac{-3^2\cdot5^3\cdot101^3}{2^{21}}, \tfrac{-3^3\cdot5^3\cdot383^3}{2^7}\}.\]
The adelic index of any elliptic curve over $\Q$ with any of these $j$-invariants is $768 \in \mathcal{I}_7$. The curve \mc{28.64.3.b.1} has genus $3$ and maps to a rank $0$ elliptic curve. Using the code \texttt{ModCrvToEC
}\cite{JacobJeremycode}, we determine that it has exactly eight rational points, of which four are non-CM. Their $j$-invariants are in $\{\tfrac{3^3\cdot13}{2^2}, \tfrac{-3^3\cdot13\cdot479^3}{16384}\}$ and both give adelic index $768$. The curves \mc{42.48.2.d.1} and \mc{126.48.2.a.1} are both genus $2$ rank $0$, so we use the command \texttt{Chabauty0} in \texttt{Magma} to determine their rational points. The curve \mc{42.48.2.d.1} has $2$ rational points, both of which are cusps. Lastly, the curve \mc{126.48.2.a.1} has no rational points.

This completes the proof of Theorem \ref{t:mainthm} in the case of $p=7$. All of the rational point computations are verified in the file \texttt{Section 7.3.m} in the repository \cite{projectcode}.

\subsection{Rational Isogenies of Degree \texorpdfstring{$13$}{13}}\label{s:rationalisogeniesdegree13}

Lastly, we consider the case $p = 13$. The six $13$-adic images from Proposition \ref{P:13isog-elladic} each have a commutator index of $168$ inside $\SL_2(\Z_{13})$. Thus, we again need only consider a single lattice, associated with $X_0(13)$.

\begin{figure}[H]
\centering
{\includegraphics[height=1.2in]{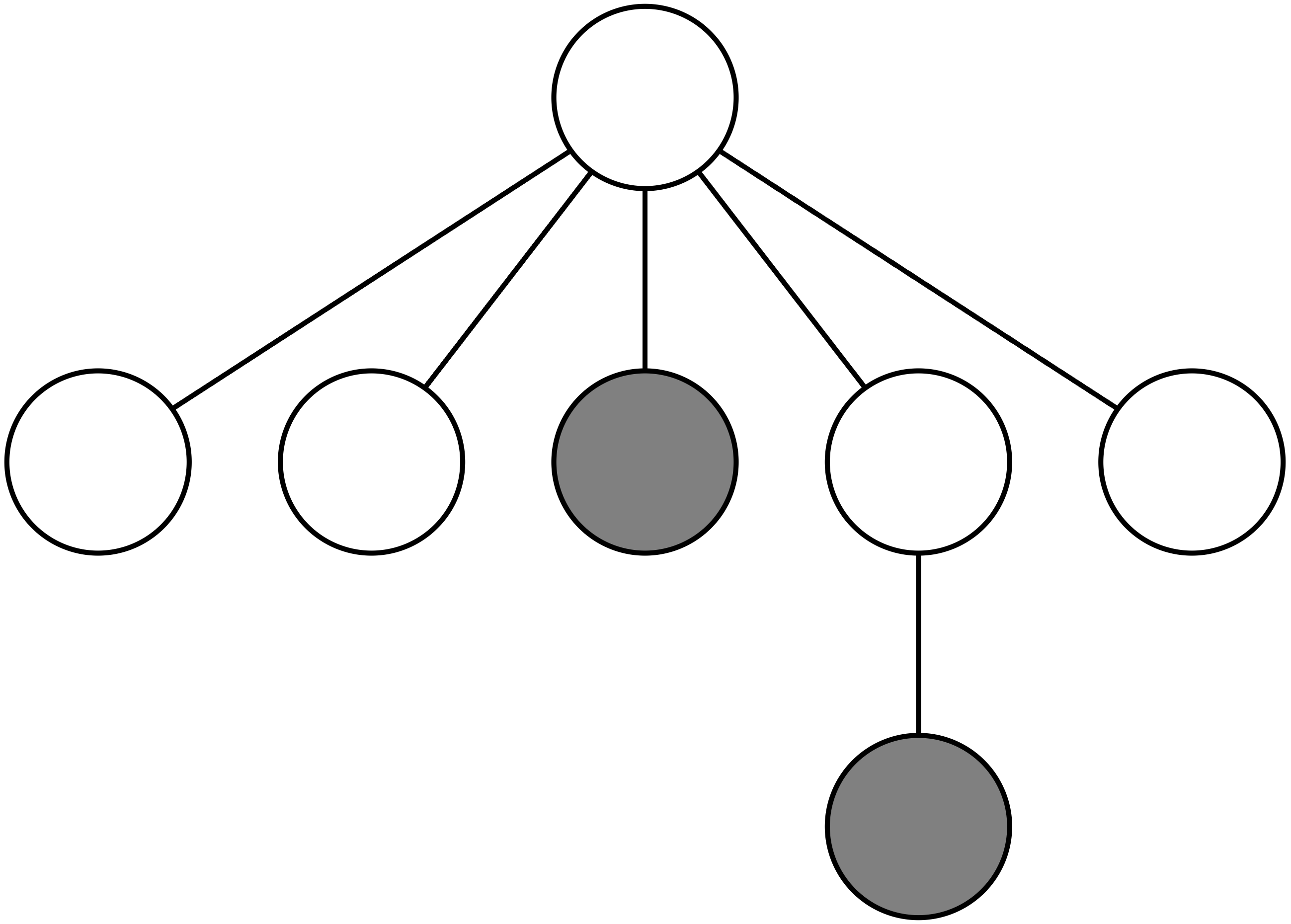}}
\caption{The lattice $\mathcal{L}_{13}(X_0(13))$}
\label{f:x013}
\end{figure}

The coloring scheme of the nodes is the same as in the previous section. We have that $|\mathcal{L}_{13}^{\mathrm{inf}}(X_0(13))| = 5$. By computing the commutator index for each of these groups, we find that
\[
\{ [\SL_2(\Z_6) : H'] : H \in \mathcal{L}_{13}^{\mathrm{inf}}(X_0(13)) \} = \{2\}.
\]
Thus, the only possible adelic index associated with these nodes in the lattice is
\[ [\GL_2(\widehat{\Z}) : G_E] = 336 \in \mathcal{I}_{13}. \]

It remains to study the modular curves associated with the two gray nodes. These are \mc{78.84.5.b.1} and \mc{312.28.1.a.1}. The first curve admits a map to a rank $0$ elliptic curve. Using the code \texttt{ModCrvToEC
}\cite{JacobJeremycode}, we determine that there are exactly two rational points on this curve, both of which are cusps. The second curve, \mc{312.28.1.a.1}, also has rank $0$ and two rational points, again both of which are cusps.

This completes the proof of Theorem \ref{t:mainthm} in the case of $p=13$. All of the rational point computations are verified in the file \texttt{Section 7.4.m} in the repository \cite{projectcode}.

\section{Handling the Primes \texorpdfstring{$p = 2$ and $3$}{p = 2 and 3}}\label{s:2and3}

\subsection{The Product Approach}\label{s:productapproach} For $p \in \{2,3\}$, we take a product-based strategy instead of the lattice-based approach from the previous section. Let $E/\Q$ be a non-CM elliptic curve that admits a rational isogeny of degree 2 or 3, under the additional hypothesis that it does not admit any rational isogenies of larger prime degree, consistent with the assumptions in Propositions \ref{P:2isog-elladic} and \ref{P:3isog-elladic}. By Propositions \ref{P:2isog-elladic} and \ref{P:3isog-elladic}, we know all possible $2$-adic and $3$-adic Galois images of $E$ that contain $-I$. We denote these sets of images by $\mathcal{L}_2 \coloneqq \mathcal{L}_2(p)$ and $\mathcal{L}_3 \coloneqq \mathcal{L}_3(p)$, respectively. Assuming the $\ell$-adic Galois representation is surjective for all $\ell \geq 5$, by Proposition \ref{P:6-p},
the adelic index of $E$ depends only on its $6$-adic image. More specifically, in this case we have   \[
    [\GL_2(\widehat{\Z}) : G_E] = \left[\SL_2(\Z_2) : G_E(2^\infty)' \right] \left[\SL_2(\Z_3) : G_E(3^\infty)' \right] \delta_E,
    \]
    where $\delta_E$ is the \emph{entanglement factor}, defined as the order of the common quotient group $Q$ of $G_E(2^\infty)'$ and $G_E(3^\infty)'$, as in Goursat's lemma. Proposition \ref{P:2isog-elladic} guarantees that the $\ell$-adic representation is surjective for all $\ell \geq 5$ in the $p = 2$ case; in the $p = 3$ case, it is possible that the $5$-adic representation is nonsurjective, and we deal with this possibility at the beginning of Section \ref{subsec:3isoglattice}.

For each direct product $G = H \times K$ with $H \in \mathcal{L}_2$ and $K \in \mathcal{L}_3$, we check whether the modular curve $X_G$ has finitely many rational points. Among those that do, we take all maximal such groups and determine all of their rational points (or explain why doing so is unnecessary for our purposes). We then proceed to the curves with infinitely many rational points. By iterating over all possible $2$-adic images, $3$-adic images, and entanglement factors, one obtains a superset of the possible adelic indices for the elliptic curves under consideration. Finally, we show that any resulting candidate  indices lying outside of $\mathcal{I}_p$ do not actually occur.

 \subsection{Rational Isogenies of Degree \texorpdfstring{$2$}{2}}\label{subsec:2isoglattice}

Let $E/\Q$ be a non-CM elliptic curve that admits a rational isogeny of degree $2$ and no rational isogenies of prime degree greater than $2$. Let $\mathcal{L}_2$ and $\mathcal{L}_3$ be as in the previous section. By Proposition \ref{P:2isog-elladic}, $\mathcal{L}_2$ consists of $188$ groups and  $\mathcal{L}_3 = \{ K_1, K_2, K_3 \}$ where
\[
K_1 = \GL_2(\Z_3), \quad K_2 = \mc{3.3.0.a.1}, \quad \text{and} \quad K_3 = \mc{3.6.0.b.1}.
\]
Further, the $\ell$-adic Galois representation of $E$ is surjective for all primes $\ell \geq 5$. 

\subsubsection{Maximal Products with Finitely Many Points}
Consider the collection of all direct products of the form $G = H \times K$ with $H \in \mathcal{L}_2$ and  $K \in \mathcal{L}_3$. Let $\mathcal{M}$ be the subset of such products $G$ that are maximal within this collection with respect to the property that the associated modular curve $X_G$ has only finitely many rational points. Note that the associated modular curves of their parent groups have infinitely many rational points. The set $\mathcal{M}$ is computed using the function \texttt{DetermineMaxProdsWithFinManyPts} in the file \texttt{Approach\textunderscore Product.m} and the output can be viewed by using \texttt{IdentifyProds(MaxProdsWithFinManyPts)} in the file \texttt{Case p = 2.m}. Running these commands, we find that $\mathcal{M}$ consists of $34$ groups, corresponding to the modular curves:

\begin{multicols}{4}
\begin{itemize}
    \item \mc{6.18.1.a.1}
    \item \mc{12.18.1.a.1}
    \item \mc{12.18.1.b.1}
    \item \mc{12.18.1.c.1}
    \item \mc{12.36.1.bo.1}
    \item \mc{12.36.1.bq.1}
    \item \mc{12.36.1.bs.1}
    \item \mc{12.36.1.bt.1}
    \item \mc{16.96.3.ey.1}
    \item \mc{16.96.3.ey.2}
    \item \mc{16.96.3.fa.1}
    \item \mc{16.96.3.fa.2}
    \item \mc{24.18.1.a.1}
    \item \mc{24.18.1.d.1}
    \item \mc{24.36.1.fc.1}
    \item \mc{24.36.1.fi.1}
    \item \mc{24.36.1.fo.1}
    \item \mc{24.36.1.fp.1}
    \item \mc{24.36.1.fx.1}
    \item \mc{24.36.1.gk.1}
    \item \mc{24.36.2.ba.1}
    \item \mc{24.72.2.hp.1}
    \item \mc{24.72.2.hp.2}
    \item \mc{24.72.2.jo.1}
    \item \mc{32.96.3.bh.1}
    \item \mc{32.96.3.bs.1}
    \item \mc{48.72.3.a.1}
    \item \mc{48.72.3.a.2}
    \item \mc{48.72.3.h.1}
    \item \mc{48.72.3.h.2}
    \item \mc{48.72.3.i.1}
    \item \mc{48.72.3.i.2}
    \item \mc{48.72.3.p.1}
    \item \mc{48.72.3.p.2}
\end{itemize}
\end{multicols}

Most of the rational points computations for these curves are straightforward using techniques already discussed earlier in this paper. For these, we provide full details in the file \texttt{Section 8.2.m}. We now discuss the computation for the curves that require more ad hoc techniques.

We tackle the rational points of \mc{48.72.3.h.1} and \mc{48.72.3.i.2} using the technique discussed in Section \ref{subsection:sieving}. First, the curve \mc{48.72.3.h.1}, which we denote by $C$, is genus 3 rank 1. It is defined in $\mathbb{P}^2$ by the equation \[x^3y + y^4 + 4y^2z^2 + 2z^4 =0.\]It has an automorphism $i \colon [x: y: z]  \mapsto [-x: -y: z]$. The quotient is a
rank 1 elliptic curve. Therefore, there exists an abelian variety of rank $0$, which we denote $V$, such that the Jacobian of this curve decomposes as $E \times V$. For any rational point $P$ on \mc{48.72.3.h.1}, the point $P-i(P)$ lies in the torsion subgroup of its Jacobian, which we denote by $J_C(\Q)_{\tors}$. It thus suffices to compute preimages of rational points in $J_C(\Q)_{\tors}$ under the map $a \colon C(\Q) \to J_C(\Q)_{\tors}$ given by $P \mapsto P-i(P)$. Note that $a$ is injective away from the fixed points of $i$. By using local computations, we can deduce that $J_C(\Q)_{\tors}$ is a subgroup of $\Z/2\Z \times \Z/2\Z$. Further, by working modulo $5$, we check that there is no divisor of the form $P - Q$ that has order $2$. Therefore, the only rational points of $C$ are the fixed points of this automorphism. It is easy to check that there are exactly two fixed points: $(1:0:0)$ and $(-1:1:0)$. One is a cusp and one is a CM point. Next, the curve \mc{48.72.3.i.2} is also of genus 3 and rank 1. It is defined in $\mathbb{P}^2$ by the equation \[2x^4 + 4x^2y^2 - 2xz^3 + y^4=0.\] It has an automorphism $i \colon (x: y: z)  \mapsto (x: -y: z)$. The quotient is a
rank 1 elliptic curve. By a similar analysis, we show that the only rational points of this curve are the fixed points of this automorphism. As before, we find exactly two fixed points (one cusp, one CM). For the curve \mc{48.72.3.i.1}(=\mc{16.24.0.m.1} $\times$ $X_{\mathrm{ns}}^{+}(3)$), it suffices to compute all the adelic indices of non-CM rational points of \mc{48.72.3.h.1}(=\mc{16.24.0.l.2} $\times$ $X_{\mathrm{ns}}^{+}(3)$), and we just determined that there are no such points. This suffices because, from Tables 11 and 12 of \cite{MR4868206}, if there is a non-CM $E/\Q$ whose $2$-adic image is a subgroup of \mc{16.24.0.m.1}, then there exists an $E'/\Q$ that is 2-isogenous to $E$ whose $2$-adic image will be a subgroup of \mc{16.24.0.l.2}. From Lemma \ref{lemma:isomorphicimages}, admitting a 2-isogeny does not affect the $3$-adic image. So, if there is a non-CM $E/\Q$ that corresponds to a rational point on \mc{48.72.3.i.1}, then there exists an $E'/\Q$ that corresponds to a rational point on \mc{48.72.3.h.1}. Further, from Corollary 4 of \cite{MR4830946}, we know that isogenous elliptic curves share the same adelic index. By a similar argument, for the curve \mc{48.72.3.h.2}, it suffices to compute all the adelic indices of non-CM rational points of \mc{48.72.3.i.2}, and we have already determined that there are no such points.

The modular curves \mc{16.96.3.ey.1}, \mc{16.96.3.ey.2}, \mc{16.96.3.fa.1}, and \mc{16.96.3.fa.2} are discussed in Section 9.2 of \cite{MR3500996}. Their noncuspidal, non-CM rational points along with their $j$-invariants are given. Each of the four curves has one non-CM $j$-invariant. These are $2^4\cdot17^3$, $2^{11}$, $\frac{17^3\cdot241^3}{2^4}$, and $\frac{257^3}{2^8}$, respectively. Each of their adelic indices is 384, which lies in the set $\mathcal{I}_2$ in Theorem \ref{t:mainthm}. The curves \mc{32.96.3.bh.1} and \mc{32.96.3.bs.1} are discussed in Section 9.4 of \cite{MR3500996}, and their noncuspidal, non-CM rational points along with their $j$-invariants are given. Each has one non-CM $j$-invariant. These are  $\tfrac{-3^3\cdot5^3\cdot47^3\cdot1217^3}{2^8\cdot31^8}$ and $\tfrac{3^3\cdot5^6\cdot13^3\cdot23^3\cdot41^3}{2^{16}\cdot31^4}$, respectively. The adelic index for these $j$-invariants is also 384.   

 With this, the remaining four curves to check are \mc{48.72.3.a.1} (= \mc{16.24.0.k.2} $\times \; K_2$), \mc{48.72.3.a.2} (= \mc{16.24.0.k.1} $\times \; K_2$), \mc{48.72.3.p.1} (=\mc{16.24.0.n.1} $\times$ $X_{\mathrm{ns}}^{+}(3)$) and \mc{48.72.3.p.2}. The first two curves have genus $3$ and rank $3$ and did not appear to be amenable to the techniques used in this article. Instead of computing the rational points on these two curves, we handle these group theoretically later in this section. We discuss \mc{48.72.3.p.1} and \mc{48.72.3.p.2} after that.

\subsubsection{Products with Infinitely Many Rational Points}
Next, we consider the direct products $G = H \times K$ with $H \in \mathcal{L}_2$ and $K \in \mathcal{L}_3$ for which $X_G$ has infinitely many rational points. By using the function \texttt{FindDeltaE(ProdsWithInfManyPts)} in the file 
\texttt{Case p = 2.m}, we find that 
\[
\delta_E \in
\begin{cases}
    \{ 1 \} & K = K_1 \\
    \{ 1, 2, 4 \} & K = K_2 \\
    \{ 1, 2\} & K = K_3.
\end{cases}
\]

By taking the product of commutator indices of $2$-adic images, $3$-adic images, and entanglement factors, we obtain the following set of possible indices: 
\begin{equation*}
    \{{12, 48, 72, 96, 144, 192, 288, 384, 576, 768, 864, 1152, 1536, 2304, 4608}\}.
\end{equation*}
All of these except $2304$ and $4608$  appear in Zywina's conjecture. Therefore, our goal for the remainder of this section is to rule out these two indices. 

Of the $202$ such groups $G$ with $X_G$ containing infinitely many rational points, there are a total of seven for which a fiber product could possibly give rise to an adelic index of $2304$ or $4608$. These are:

\begin{itemize}
\item For \mc{24.36.0.cg.1}, we must rule out both $2304$ and $4608$.
\item For \mc{24.36.0.cj.1}, we must rule out only $2304$.
\item For \mc{24.36.1.gl.1}, we must rule out only $2304$.
\item For \mc{48.72.0.c.1}, we must rule out both $2304$ and $4608$.
\item For \mc{48.72.0.c.2}, we must rule out both $2304$ and $4608$.
\item For \mc{48.72.0.d.1}, we must rule out only $2304$.
\item For \mc{48.72.0.d.2}, we must rule out only $2304$.
\end{itemize}

For \mc{24.36.0.cg.1}, \mc{24.36.0.cj.1},  and \mc{24.36.1.gl.1}, we consider all of the subgroups that are fiber products of the two respective factors that contain $-I$, have full determinant, and have commutator index outside $\mathcal{I}_{2}$. For subgroups that have commutator index outside $\mathcal{I}_2$, we compute the rational points of the corresponding modular curve.

For \mc{48.72.0.c.1}, \mc{48.72.0.c.2}, \mc{48.72.0.d.1}, and  \mc{48.72.0.d.2}, instead of considering all of the subgroups that are fiber products of two respective factors, contain $-I$, have full determinant, and have commutator index in \{2304, 4608\}, we only need to consider maximal subgroups that could potentially contain a subgroup that would lead to an index outside of $\mathcal{I}_2$.

Under this framework, each of the seven groups gives a list of modular curves we need to consider. These are listed in the following table:

{\small
\begin{longtable}{|c|l|}
\caption{Modular curves to consider to rule out 2304, 4608}\\
\hline
\textbf{$G$} & \textbf{Curves to consider } \\
\hline \endfirsthead
\multicolumn{2}{c}
{{\bfseries \tablename\ \thetable{} -- continued from previous page}} \\
\hline $G$ & \textbf{Curves to consider}\\ \hline 
\endhead
\hline \multicolumn{2}{|r|}{\textit{Continued on next page}} \\ \hline
\endfoot

\hline
\endlastfoot
    \mc{24.36.0.cg.1} & \makecell[l]{\mc{24.144.9.dga.1},
\mc{24.144.9.dgq.1},
\mc{24.144.9.eid.1},
\mc{24.144.9.ejb.1},\\
\mc{24.288.17.gil.1},
\mc{24.288.17.gim.1},
\mc{24.288.17.gim.2},
\mc{24.288.17.gin.1},\\
\mc{48.144.7.bfg.1},
\mc{48.144.7.bft.1},
\mc{48.144.7.ze.1},
\mc{48.144.7.zv.1},\\
\mc{48.144.9.bep.1},
\mc{48.144.9.bfa.1},
\mc{48.144.9.bjz.1},
\mc{48.144.9.bks.1},\\
\mc{48.144.11.sm.1},
\mc{48.144.11.sr.1},
\mc{48.288.21.gvf.1},
\mc{48.288.21.gvg.1},\\
\mc{48.288.21.gvg.2},
\mc{48.288.21.gvh.1},
} \\ \hline

\mc{24.36.0.cj.1} & \makecell[l]{\mc{48.288.15.cva.1}, 
\mc{48.288.15.cvb.1},
\mc{48.288.15.cvz.1},
\mc{48.288.15.cwc.1},\\
\mc{48.288.15.cyo.1},
\mc{48.288.15.cyp.1},
\mc{48.288.15.cyv.1},
\mc{48.288.15.cyy.1}}
\\ \hline

\mc{24.36.1.gl.1} & \makecell[l]{
\mc{48.288.15.cdy.1}, \mc{48.288.15.ceb.1},
\mc{48.288.17.fay.1}, 
\mc{48.288.17.faz.1},\\
\mc{48.288.19.hiq.1}, \mc{48.288.19.hit.1},
\mc{48.288.21.ezh.1}, \mc{48.288.21.ezi.1}} \\ \hline

\mc{48.72.0.c.1} & \makecell[l]{
\mc{48.144.3.bb.1}, \mc{48.144.3.r.1}, \mc{48.144.3.t.2},
\mc{48.144.3.z.1},
\mc{48.144.5.ee.2},\\ 
\mc{48.144.5.ef.1},
\mc{48.144.5.eg.1},
\mc{48.144.5.eh.2}, 
\mc{48.144.5.ei.2}, 
\mc{48.144.5.ej.1}, \\ 
\mc{48.144.5.ek.1}, \mc{48.144.5.el.2},
\mc{48.144.5.em.1}, 
\mc{48.144.5.en.2},
\mc{48.144.5.eo.2},\\ 
\mc{48.144.5.ep.1}, \mc{48.144.5.eq.2},
\mc{48.144.5.er.1},
\mc{48.144.5.es.1}, 
\mc{48.144.5.et.2},\\
\mc{48.144.5.eu.1},   
\mc{48.144.5.ev.2},
\mc{48.144.5.ew.2},  
\mc{48.144.5.ex.1}, 
\mc{48.144.5.ey.2}, \\
\mc{48.144.5.ez.1},
\mc{48.144.5.fa.1}, 
\mc{48.144.5.fb.2},
\mc{48.144.7.bas.1},
\mc{48.144.7.bgq.2},\\ 
\mc{48.144.11.ys.1}, 
\mc{48.144.11.yq.1}}\\

\hline

\mc{48.72.0.c.2} & \makecell[l]{
\mc{48.144.3.bb.2},
\mc{48.144.3.r.2}, 
\mc{48.144.3.t.1},
\mc{48.144.3.z.2}, \mc{48.144.5.ee.1},  \\
\mc{48.144.5.ef.2},
\mc{48.144.5.eg.2},
\mc{48.144.5.eh.1},
\mc{48.144.5.ei.1}, 
\mc{48.144.5.ej.2},\\
\mc{48.144.5.ek.2},
\mc{48.144.5.el.1}, 
\mc{48.144.5.em.2},
\mc{48.144.5.en.1},  
\mc{48.144.5.eo.1},\\
\mc{48.144.5.ep.2}, 
\mc{48.144.5.eq.1},
\mc{48.144.5.er.2},
\mc{48.144.5.es.2},
\mc{48.144.5.et.1},\\
\mc{48.144.5.eu.2},     \mc{48.144.5.ev.1}, 
\mc{48.144.5.ew.1},
\mc{48.144.5.ex.2}, 
\mc{48.144.5.ey.1}, \\
\mc{48.144.5.ez.2}, 
\mc{48.144.5.fa.2},
\mc{48.144.5.fb.1},  
\mc{48.144.7.bas.2}, \mc{48.144.7.bgq.1},\\
\mc{48.144.11.yq.2},
\mc{48.144.11.ys.2}}  \\ \hline

\mc{48.72.0.d.1} & \makecell[l]{
\mc{48.144.3.bd.2},
\mc{48.144.3.bf.1}, 
\mc{48.144.5.fi.1},
\mc{48.144.5.fw.1}, \mc{48.144.5.fz.1}, \\  \mc{48.144.5.gb.1}, \mc{48.144.5.gj.1}, 
\mc{48.144.5.gw.1},
\mc{48.144.5.gy.1}, 
\mc{48.144.5.hb.1} } \\ \hline

\mc{48.72.0.d.2} & \makecell[l]{\mc{48.144.3.bd.1}, \mc{48.144.3.bf.2}, 
\mc{48.144.5.fm.1},
\mc{48.144.5.fx.1}, \mc{48.144.5.ga.1},\\
\mc{48.144.5.gd.1},  \mc{48.144.5.gf.1},
\mc{48.144.5.gu.1}, \mc{48.144.5.gx.1}, \mc{48.144.5.ha.1} }
\end{longtable}}

These are mostly also amenable to techniques already discussed in this paper. We go through each in full detail in \texttt{Section 8.2.m}. We provide additional comments about curves with a more ad hoc approach here.

The curve \mc{48.144.3.bf.1} is a genus 3 rank 3 curve. Instead of computing its set of rational points explicitly, we check that it has commutator index 576 and compute its proper maximal subgroups that are fiber products of two respective factors, contain $-I$, and have full determinant. There are 11 such groups whose labels are in the set \{\mc{48.288.13.bhs.1}, \mc{48.288.13.bhr.1}, \mc{48.288.13.bib.1},  \mc{48.288.13.bhy.1}, \mc{48.288.21.hxr.1}, \mc{48.288.13.bft.1}, \mc{48.288.21.hxt.1}, \mc{48.288.13.bfu.1}, \mc{48.288.13.bga.1}, \mc{48.288.13.bgd.1}, \mc{48.288.21.hxl.2}\}. Each of these can be handled using methods already discussed, and this is also done in \texttt{Section 8.2.m}.

The curve \mc{48.144.3.bd.1} is a genus 3 rank 3 curve. Again, instead of computing its set of rational points explicitly, we check that it has commutator index 576 and compute its proper maximal subgroups that are fiber products of two respective factors that contain $-I$ and have full determinant. There are 11 such curves and their labels comprise the set \{\mc{48.288.21.hxh.2}, 
\mc{48.288.13.bfj.1}, \mc{48.288.13.bfk.1}, \mc{48.288.13.bgs.1}, \mc{48.288.13.bgr.1}, \mc{48.288.13.bfa.1}, \mc{48.288.13.bgi.1}, \mc{48.288.13.bfd.1}, \mc{48.288.13.bgl.1}, \mc{48.288.21.hxf.1}, \mc{48.288.21.hxp.2}\}. We similarly handle these with discussed techniques on the GitHub file.

\subsubsection{Rational points on \mc{48.72.3.a.1},\mc{48.72.3.a.2},\mc{48.72.3.p.1}, and \mc{48.72.3.p.2}}

The curve \mc{48.72.3.a.1} can be realized as \mc{16.24.0.k.2} $\times \; K_2$. We divide our analysis into three parts. We first consider the direct product of children of \mc{16.24.0.k.2} with $K_2$. Then, we consider the direct product of \mc{16.24.0.k.2} with the proper subgroups of $K_2$ in this lattice. Lastly, we consider the proper maximal subgroups that are fiber products of the two respective factors that contain $-I$, have full determinant, and could possibly contain a subgroup whose commutator index lies outside $\mathcal{I}_2$.

There are six children of \mc{16.24.0.k.2} with labels \mc{16.48.0.n.1}, \mc{16.48.1.bq.1}, \mc{16.48.0.a.1}, \mc{16.48.0.h.2}, \mc{16.48.0.m.1}, and \mc{16.48.1.bv.1}. The labels of the direct product of these with $K_2$ are \mc{48.144.8.ca.1}, \mc{48.144.7.rm.1}, \mc{48.144.8.a.1}, \mc{48.144.8.bd.1}, \mc{48.144.8.bz.2}, and \mc{48.144.7.sg.1} respectively. Let us discuss their rational points. The curve \mc{48.144.8.ca.1} covers \mc{12.36.1.bt.1} which has rank 0. It has exactly 6 points and all are cusps or CM. The curve \mc{48.144.7.rm.1} covers \mc{24.72.2.jo.1} which is bielliptic and has rank 2. It has Weierstrass model \[y^2 = 2x^6 + 2.\]  From Table \ref{tab:biellipticchabauty}, this has exactly four rational points, all of which are CM. The curve \mc{48.144.8.a.1} covers \mc{12.36.2.a.1} which has genus 2, rank 0. By using \texttt{Chabauty0}, it has exactly four rational points: two are cusps and two are CM points. The curve \mc{48.144.8.bd.1} covers \mc{12.36.1.bs.1} which we have already considered in this section. The curve \mc{48.144.8.bz.2} covers \mc{12.36.2.p.1} which has genus 2, rank 0. Using \texttt{Chabauty0}, we know that it has exactly four rational points, out of which two are cusps and two are CM points. The curve \mc{48.144.7.sg.1} covers \mc{24.36.1.gk.1} which has rank 0. It has exactly two rational points: one is a cusp and one is CM. 

For the second step, we consider the direct product of \mc{16.24.0.k.2} with the proper subgroups of $K_2$ in this lattice; the only such subgroup is $K_3$. The direct product of \mc{16.24.0.k.2} with $K_3$ has label \mc{48.144.7.ik.1}. This curve covers \mc{12.36.1.bq.1} which we have already discussed: it has no non-CM rational points.

Lastly, we consider the proper maximal subgroups that are fiber products of the two respective factors that contain $-I$, have full determinant, and could possibly contain a subgroup whose commutator index lies outside $\mathcal{I}_2$. This gives the following modular curves:

\begin{multicols}{4}
    \begin{itemize}
        \item \mc{48.144.6.bk.2}
        \item \mc{48.144.7.hu.1}
        \item \mc{48.144.7.kt.1}
        \item \mc{48.144.7.mg.1}
        \item \mc{48.144.7.mw.1}
        \item \mc{48.144.8.lu.1}
        \item \mc{48.144.9.baj.1}
        \item \mc{48.144.9.bay.1}
        \item \mc{48.144.9.zt.1}
        \item \mc{48.144.10.lt.1}
    \end{itemize}
\end{multicols}

In \texttt{Section 8.2.m}, we show that these curves all have no non-CM rational points. Although we have not determined the set of rational points of \mc{48.72.3.a.1}, we have now shown that none of its subgroups in our lattice could result in an adelic index outside $\mathcal{I}_2.$

We adopt a similar strategy as the previous subsection for \mc{48.72.3.a.2}, which can be realized as \mc{16.24.0.k.1} $\times \; K_2$. The children of \mc{16.24.0.k.1} comprise the set \{\mc{16.48.0.n.1}, \mc{16.48.0.h.1}, \mc{16.48.0.a.1}, \mc{16.48.1.bl.1}, \mc{16.48.0.m.2},  \mc{16.48.1.bg.1}\}. The labels of direct product of these with $K_2$ are \{\mc{48.144.8.ca.1}, \mc{48.144.8.bd.2}, \mc{48.144.8.a.1}, \mc{48.144.7.pu.1}, \mc{48.144.8.bz.1}, \mc{48.144.7.pa.1}\}. These cover \mc{12.36.1.bt.1}, \mc{12.36.1.bs.1}, \mc{12.36.2.a.1}, \mc{24.36.1.fp.1}, \mc{12.36.2.p.1}, and \mc{24.36.1.fo.1}, respectively. All of the latter curves have no non-CM rational points, so none of the former curves do either.

We next consider the direct product of \mc{16.24.0.k.1} with the proper subgroups of $K_2$ in this lattice. The only such subgroup is $K_3$. The direct product of \mc{16.24.0.k.1} with $K_3$ has label \mc{48.144.7.ik.2}. This covers \mc{12.36.1.bq.1} which we have already discussed. It has no non-CM rational points. 

Lastly, we consider all the proper maximal subgroups that are fiber products of the two respective factors that contain $-I$, have full determinant, and could possibly contain a subgroup whose commutator index lies outside $\mathcal{I}_2$. From this, we obtain the following modular curves:

\begin{multicols}{4}
    \begin{itemize}
        \item \mc{48.144.6.bk.1}
        \item \mc{48.144.7.hu.2}
        \item \mc{48.144.7.ks.1}
        \item \mc{48.144.7.mh.1}
        \item \mc{48.144.7.mx.1}
        \item \mc{48.144.8.lv.1}
        \item \mc{48.144.9.bai.1}
        \item \mc{48.144.9.zs.1}
        \item \mc{48.144.9.baz.1}
        \item \mc{48.144.10.ls.1}

    \end{itemize}
\end{multicols}

We show in \texttt{Section 8.2.m} that these all have no non-CM rational points. These all use already-discussed methods, with the slight exception of the curve \mc{48.144.7.mh.1}. This covers \mc{24.72.2.gm.1} which is genus 2, rank 2 bielliptic. This curve has no rational points, which can be verified using a version of the Mordell--Weil sieve implemented by Bianchi and Padurariu designed to prove that the set of rational points for such curves is empty \cite{simplesievecode}. To conclude, although we have not determined the set of rational points of \mc{48.72.3.a.2}, we have now shown that none of its subgroups in our lattice could result in an adelic index outside $\mathcal{I}_2.$

For \mc{48.72.3.p.1} (=\mc{16.24.0.n.1} $\times$ $X_{\mathrm{ns}}^{+}(3)$), it suffices to compute all the adelic indices of non-CM rational points of \mc{48.72.3.a.1} that come from subgroups in our lattice. This is because, from Tables 11 and 12 of \cite{MR4868206}, if there is a non-CM $E/\Q$ whose $2$-adic image is a subgroup of \mc{16.24.0.n.1}, then there exists an $E'/\Q$ that is 2-isogenous to $E$ whose $2$-adic image will be a subgroup of \mc{16.24.0.k.2}. From Lemma \ref{lemma:isomorphicimages}, admitting a 2-isogeny does not affect the $3$-adic image. So, if there is a non-CM $E/\Q$ that corresponds to a rational point on \mc{48.72.3.p.1}, then there exists an $E'/\Q$ that corresponds to a rational point on \mc{48.72.3.a.1}. Further, from Corollary 4 of \cite{MR4830946}, we know that isogenous elliptic curves share the same adelic index. Lastly, by a similar argument, it suffices for the curve \mc{48.72.3.p.2} to compute all the adelic indices of non-CM rational points of \mc{48.72.3.a.2} that come from subgroups in our lattice.

This concludes the proof of Theorem \ref{t:mainthm} for $p=2.$

\subsection{Rational Isogenies of Degree \texorpdfstring{$3$}{3}}\label{subsec:3isoglattice}

Let $E/\Q$ be a non-CM elliptic curve that admits a rational isogeny of degree $3$ and no rational isogeny of any larger prime degree. By Proposition \ref{P:3isog-elladic}, we know all possible $2$-adic and $3$-adic Galois images of $E$ that contain $-I$; we denote the sets of these images by $\mathcal{L}_2$ and $\mathcal{L}_3$, respectively. In particular, $\mathcal{L}_2$ consists of $15$ groups and  $\mathcal{L}_3$ consists of $14$ groups. There are two possibilities for the $5$-adic image: $\GL_2(\Z_5)$ and \mc{5.5.0.a.1}. However, we note that the modular curve \mc{15.20.1.a.1} $= X_0(3) \times$ \mc{5.5.0.a.1} is an elliptic curve of rank $0$. It has only two noncuspidal non-CM rational points. These give rise only to elliptic curves of adelic index $160$. For this reason, we will assume the $5$-adic Galois representation is surjective for the remainder of the analysis. Then, by Proposition \ref{P:3isog-elladic}, the $\ell$-adic Galois representation of $E$ is surjective for all $\ell \geq 5$.

\subsubsection{Maximal Products with Finitely Many Rational Points}
For each direct product $G = H \times K$ whose factors come from $H \in \mathcal{L}_2$ and  $K \in \mathcal{L}_3$, we consider whether $X_G$ has finitely or infinitely many rational points. Let $\mathcal{M}$ be the set of such products that are maximal with respect to the property of having finitely many rational points. The set $\mathcal{M}$ is determined using the function \texttt{DetermineMaxProdsWithFinManyPts} in the file \texttt{Approach\textunderscore Product.m} and the output can be viewed by using \texttt{IdentifyProds(MaxProdsWithFinManyPts)} in the file \texttt{Case p = 3.m}. We obtain $40$ modular curves. These are:

\begin{multicols}{4}
\begin{itemize}
\item \mc{6.24.1.a.1}
\item \mc{12.24.1.b.1}
\item \mc{12.32.1.b.1}
\item \mc{12.72.1.d.1}
\item \mc{12.72.1.f.1}
\item \mc{18.24.1.a.1}
\item \mc{18.36.2.d.1}
\item \mc{18.72.2.b.1}
\item \mc{18.72.2.b.2}
\item \mc{18.72.2.d.1}
\item \mc{18.108.2.a.1}
\item \mc{18.108.2.b.1}
\item \mc{18.108.2.c.1}
\item \mc{18.108.2.d.1}
\item \mc{18.108.2.d.2}
\item \mc{18.108.2.e.1}
\item \mc{24.24.1.bx.1}
\item \mc{24.72.1.t.1}
\item \mc{36.24.1.a.1}
\item \mc{36.72.1.b.1}
\item \mc{36.72.1.c.1}
\item \mc{36.72.2.a.1}
\item \mc{36.72.2.a.2}
\item \mc{36.72.2.b.1}
\item \mc{54.108.2.a.1}
\item \mc{72.24.1.a.1}
\item \mc{72.72.1.d.1}
\item \mc{72.72.2.a.1}
\item \mc{72.72.2.a.2}
\item \mc{72.72.2.b.1}
\item \mc{72.72.2.b.2}
\item \mc{72.72.2.c.1}
\item \mc{72.72.2.d.1}
\item \mc{72.72.3.bd.1}
\item \mc{72.72.3.bd.2}
\item \mc{72.72.3.bf.1}
\item \mc{72.72.3.d.1}
\item \mc{72.72.3.l.1}
\item \mc{72.72.3.x.1}
\item \mc{216.72.3.b.1}
\end{itemize}
\end{multicols}

As before, we will only highlight unusual cases in this article. Full computations can be found on the GitHub repository. In particular, all of the computational claims about rational points for this section can be verified in the file \texttt{Section 8.3.m}.

The elliptic curve \mc{12.32.1.b.1} is of rank $0$ and has eight rational points. Its non-CM $j$-invariants are $\{\tfrac{-3^3\cdot11^3}{2^2}, \tfrac{3^2\cdot23^3}{2^6}\}$. The adelic index for both of these $j$-invariants is $128 \in \mathcal{I}_3$.

Let $C$ denote the subscheme of $\mathbb{P}^3_{\Q}$ defined by the sequence of homogeneous polynomials that represent the model of the genus 2 curve \mc{72.72.2.d.1} produced by Zywina's code. We verify that the set of points $C(\Z/4\Z)$ is empty. Thus, there are no rational points.

Next, consider \mc{72.72.3.bd.1}, which is $\mc{8.2.0.b.1} \times \mc{9.36.0.d.1}$. Suppose there is a non-CM $E/\Q$ that corresponds to a rational point on this curve. From Table 6 of \cite{rakvi2023classification3adicgalois}, the associated isogeny-torsion graph of $E$ must be $L_3(9)$. However, from Table 18 of \cite{MR4868206}, the $2$-adic image of $E$ must be $\GL_2(\Z_2)$, which is a contradiction. Therefore, all of the curve's rational points are either cusps or CM points. By the LMFDB, there are $3$ rational cusps and no CM points. Therefore, there are three rational points on this curve. The same argument is also applicable to \mc{72.72.3.bd.2}(= $\mc{8.2.0.b.1} \times \mc{9.36.0.d.2}$), \mc{72.72.3.bf.1}(=$\mc{8.2.0.b.1} \times \mc{9.36.0.e.1}$), \mc{72.72.3.d.1}(=$\mc{8.2.0.b.1} \times \mc{9.36.0.a.1}$), \mc{72.72.3.l.1}(=$\mc{8.2.0.b.1} \times \mc{9.36.0.b.1}$), \mc{72.72.3.x.1} (=$\mc{8.2.0.b.1} \times \mc{9.36.0.c.1}$) and \mc{216.72.3.b.1}(=$\mc{8.2.0.b.1} \times \mc{27.36.0.a.1}$). Therefore, all of their rational points are either cusps or CM points.

\subsubsection{Products with Infinitely many Rational Points}
Next, we consider the direct products $G = H \times K$ with $H \in \mathcal{L}_2$ and  $K \in \mathcal{L}_3$ for which $X_G$ has infinitely many rational points. By using the function \texttt{FindDeltaE(ProdsWithInfManyPts)} in the file 
\texttt{Case p = 3.m}, we find that 
\[
\delta_E \in \{1,3\}.
\]

By taking the product of commutator indices of $2$-adic image, $3$-adic image, and entanglement factor, we obtain the following set of possible indices: 
\[ \{16, 32, 48, 96, 128, 144, 160, 288, 384, 432, 768, 864, 1296, 3888\}.\]
All of these except $432$ and $3888$ appear in $\mathcal{I}_3$. Therefore, our goal for the remainder of this section is to rule out these two as possible indices. 

There are a total of $17$ curves for which a fiber product could possibly give rise to an adelic index of $432$ or $3888$. Of these, for the following $15$ curves we need to rule out the adelic index of $432$:
\begin{multicols}{4}
\begin{itemize}
\item \mc{3.12.0.a.1}
\item \mc{9.12.0.a.1}
\item \mc{9.12.0.b.1}
\item \mc{9.36.0.a.1}
\item \mc{9.36.0.c.1}
\item \mc{9.36.0.d.1}
\item \mc{9.36.0.d.2}
\item \mc{9.36.0.e.1}
\item \mc{9.36.0.f.1}
\item \mc{9.36.0.f.2}
\item \mc{9.36.0.g.1} 
\item \mc{24.24.1.cd.1}
\item \mc{72.24.0.a.1}
\item \mc{72.24.0.b.1}
\item \mc{72.24.1.b.1}
\end{itemize}
\end{multicols}
\noindent For the remaining two curves, \mc{9.36.0.b.1} and \mc{27.36.0.a.1}, we need to rule out the adelic index of $3888$.

For groups \mc{24.24.1.cd.1}, \mc{72.24.0.a.1}, \mc{72.24.0.b.1}, and \mc{72.24.1.b.1}, we observe that there are no subgroups that are fiber products of the two respective factors that contain $-I$, have full determinant, and have commutator index outside $\mathcal{I}_3$.

To address the remaining curves, we note that the commutator index of any of these groups lies in $\mathcal{I}_3.$ We then look at the maximal subgroups of $G$ that are fiber products of the two respective factors that contain $-I$, have full determinant, and may contain a subgroup whose commutator index lies outside $\mathcal{I}_3.$ If the corresponding modular curve has finitely many rational points, we find all its rational points. Otherwise, we check that the commutator index of that subgroup lies in $\mathcal{I}_3$ and repeat the process until we encounter a modular curve whose genus is larger than $1$. Performing this loop, we obtain the following set of modular curves to consider for each group.

\begin{table}[H]
\begin{tabular}{|c|l|}
\hline
$G$ & \textbf{Curves to consider}  \\
\hline\mc{3.12.0.a.1} & \mc{6.24.1.b.1}\\ \hline
    \mc{9.12.0.a.1} & \mc{18.24.1.b.1}\\ \hline
    \mc{9.12.0.b.1} & \makecell[l]{ \mc{18.72.1.e.1}, \mc{54.72.3.d.1}, \mc{54.72.4.e.1}, \mc{54.72.5.d.1}} \\ \hline
    \mc{9.36.0.a.1} & \mc{18.72.3.j.1} \\ \hline
    \mc{9.36.0.b.1} & \mc{18.72.3.m.1} \\ \hline
    \mc{9.36.0.c.1} & \mc{18.72.3.n.1} \\ \hline
    \mc{9.36.0.d.1} & \mc{18.72.3.k.1} \\ \hline
    \mc{9.36.0.d.2} & \mc{18.72.3.k.2} \\ \hline
    \mc{9.36.0.e.1} & \mc{18.72.3.o.1} \\ \hline
    \mc{9.36.0.f.1} & \mc{18.72.2.e.1} \\ \hline
    \mc{9.36.0.f.2} & \mc{18.72.2.e.2} \\ \hline
    \mc{9.36.0.g.1} & \mc{18.72.2.f.1} \\ \hline
    \mc{27.36.0.a.1} & \mc{54.72.3.c.1} \\ \hline
\end{tabular} \caption{Modular curves to consider to rule out 432, 3888}

\end{table}

These are mostly also amenable to techniques already discussed in this paper. We go through each in full detail in \texttt{Section 8.3.m}. We provide details about the curves with ad hoc approaches here.

The curve \mc{54.72.3.d.1}, which we denote by $C$, has genus 3, rank 1. It has a canonical model in $\mathbb{P}^2$ given by \[x^3y + x^3z + 3y^3z + 3y^2z^2 + 3yz^3=0.\] We will use the argument of Section \ref{subsection:sieving}. The curve $C$ has at least three rational points. Set $P_1:=(1:0:0)$, $P_2:=(0:1:0)$ and $P_3:=(0:0:1)$. It has an automorphism $[x: y: z]  \mapsto [x: z: y]$. The quotient is a rank 1 elliptic curve. Therefore, there exists an abelian variety of rank $0$, which we denote $V$, such that the Jacobian of this curve decomposes as $E \times V$. For any rational point $P$ on this curve, the point $P-i(P)$ lies in the torsion subgroup of its Jacobian, which we denote by $J_C(\Q)_{\tors}$. It suffices to compute preimages of rational points in $J_C(\Q)_{\tors}$ under the map $a \colon C(\Q) \to J_C(\Q)_{\tors}$ given by $P \mapsto P-i(P)$. Note that $a$ is injective away from the fixed points of $i$. By using local computations, we can deduce that $J_C(\Q)_{\tors}$ is a subgroup of $\Z/3\Z \times \Z/3\Z$. Further, we observe that the classes of the divisors $D_1:=P_1-P_2$ and $D_2:=P_2-P_3$ are of order 3 and generate $J_C(\Q)_{\tors}$. One can see that $P_2-i(P_2)$ is equivalent to $D_2$ and $P_3-i(P_3)$ is equivalent to $2D_2.$ Further, by working modulo $5$, we check that there is no divisor of the form $P - i(P)$ which is equivalent to $aD_1+bD_2$ for $(a,b)$ other than $(0,1)$ and $(0,2).$ Therefore, the only rational points of $C$ are the fixed points of $i$ and $\{P_2,P_3\}$. It is easy to check that the only fixed point is $P_1$. This means that the only rational points are the known ones. Two are cusps and one is a CM point. 

The curves \mc{54.72.4.e.1} and \mc{54.72.5.d.1} have a map to some rank 0 elliptic curve. Using the code \texttt{ModCrvToEC} \cite{JacobJeremycode} we determine that they have four and three rational points, respectively. All are either cusps or CM.

The curve \mc{18.72.2.f.1} has genus 2, rank 2, and is bielliptic with Weierstrass model \[y^2= - x^6 + 3x^5 + 3x^4 - 11x^3 + 3x^2 + 3x - 1.\] We used the following bielliptic model for this curve \[y^2 = 9x^6 - 99x^4 + 27x^2 - 1.\] By Table \ref{tab:biellipticchabauty}, it has six points all of which are CM points.

This concludes the proof of Theorem \ref{t:mainthm} for $p=3,$ the final case. With this, the proof of the theorem is complete.
\appendix

\section{Example Elliptic Curves} \label{A:ExCurves}

The table below contains an example of an elliptic curve over $\Q$ for each rational isogeny degree and adelic index appearing in Theorem \ref{t:mainthm}, verifying our claim that the theorem is sharp.

{\small \begin{center} 
\begin{longtable}{|c|l|l|l|}

\hline \textbf{LMFDB Label}    & \textbf{Weierstrass Model}                         & \textbf{Isog.\ Degree} & \textbf{Adelic Index} \\ \hline
\endfirsthead
\multicolumn{4}{c}
{{\bfseries \tablename\ \thetable{} -- continued from previous page}} \\
\hline \textbf{LMFDB Label}    & \textbf{Weierstrass Model}                         & \textbf{Isog.\ Degree} & \textbf{Adelic Index}\\ \hline 
\endhead
\hline \multicolumn{4}{|r|}{\textit{Continued on next page}} \\ \hline
\endfoot

\hline
\endlastfoot

\ec{46.a1}     & $y^2+xy=x^3-x^2-170x-812$                 & 2                   & 12           \\ \hline
\ec{33.a1}     & $y^2+xy=x^3+x^2-146x+621$                 & 2                   & 48           \\ \hline
\ec{1568.a1}   & $y^2=x^3+x^2-3544x-82104$                 & 2                   & 72           \\ \hline
\ec{34.a1}     & $y^2+xy=x^3-113x-329$                     & 2                   & 96           \\ \hline
\ec{726.b1}    & $y^2+xy=x^3+x^2-475x-4187$                & 2                   & 144          \\ \hline
\ec{21.a1}     & $y^2+xy=x^3-784x-8515$                    & 2                   & 192          \\ \hline
\ec{66.c1}     & $y^2+xy=x^3-10065x-389499$                & 2                   & 288          \\ \hline
\ec{20.a1}     & $y^2=x^3+x^2-41x-116$                     & 2                   & 384          \\ \hline
\ec{126350.t1} & $y^2+xy=x^3-x^2-665127947x+6188783038661$ & 2                   & 576          \\ \hline
\ec{15.a1}     & $y^2+xy+y=x^3+x^2-2160x-39540$            & 2                   & 768          \\ \hline
\ec{14.a1}     & $y^2+xy+y=x^3-2731x-55146$                & 2                   & 864          \\ \hline
\ec{4225.e1}   & $y^2+xy=x^3-1470388x-686359233$           & 2                   & 1152         \\ \hline
\ec{17.a1}     & $y^2+xy+y=x^3-x^2-91x-310$                & 2                   & 1536         \\ \hline
\ec{44.a1}     & $y^2=x^3+x^2-77x-289$                     & 3                   & 16           \\ \hline
\ec{242.a1}    & $y^2+xy+y=x^3-6295x-197958$               & 3                   & 32           \\ \hline
\ec{300.b1}    & $y^2=x^3-x^2-1213x-15863$                 & 3                   & 48           \\ \hline
\ec{34.a1}     & $y^2+xy=x^3-113x-329$                     & 3                   & 96           \\ \hline
\ec{162.a1}    & $y^2+xy=x^3-x^2-6x+8$                     & 3                   & 128          \\ \hline
\ec{26.a1}     & $y^2+xy+y=x^3-460x-3830$                  & 3                   & 144          \\ \hline
\ec{324.b1}    & $y^2=x^3-39x+94$                          & 3                   & 160          \\ \hline
\ec{676.a1}    & $y^2=x^3+x^2-88612x-10182444$             & 3                   & 288          \\ \hline
\ec{20.a1}     & $y^2=x^3+x^2-41x-116$                     & 3                   & 384          \\ \hline
\ec{162.b1}    & $y^2+xy=x^3-x^2-1077x+13877$              & 3                   & 768          \\ \hline
\ec{14.a1}     & $y^2+xy+y=x^3-2731x-55146$                & 3                   & 864          \\ \hline
\ec{19.a1}     & $y^2+y=x^3+x^2-769x-8470$                 & 3                   & 1296         \\ \hline
\ec{38.b1}     & $y^2+xy+y=x^3+x^2-70x-279$                & 5                   & 48           \\ \hline
\ec{14450.h1}  & $y^2+xy=x^3+x^2-2622825x-1639310375$      & 5                   & 192          \\ \hline
\ec{66.c1}     & $y^2+xy=x^3-10065x-389499$                & 5                   & 288          \\ \hline
\ec{50.a1}     & $y^2+xy+y=x^3-126x-552$                   & 5                   & 384          \\ \hline
\ec{338.b1}    & $y^2+xy=x^3+x^2-54421x+4945517$           & 5                   & 576          \\ \hline
\ec{43264.f1}  & $y^2=x^3-x^2-6231x-187253$                & 5                   & 864          \\ \hline
\ec{11.a1}     & $y^2+y=x^3-x^2-7820x-263580$              & 5                   & 1200         \\ \hline
\ec{26.b1}     & $y^2+xy+y=x^3-x^2-213x-1257$              & 7                   & 96           \\ \hline
\ec{507.b1}    & $y^2+xy+y=x^3+x^2-75x-1422$               & 7                   & 192          \\ \hline
\ec{1922.c1}   & $y^2+xy+y=x^3-x^2-76332x+8136267$         & 7                   & 576          \\ \hline
\ec{162.b1}    & $y^2+xy=x^3-x^2-1077x+13877$              & 7                   & 768          \\ \hline
\ec{121.a1}    & $y^2+xy+y=x^3+x^2-305x+7888$              & 11                  & 480          \\ \hline
\ec{147.b1}    & $y^2+y=x^3-x^2-912x+10919$                & 13                  & 336          \\ \hline
\ec{14450.b1}  & $y^2+xy+y=x^3-190891x-36002922$           & 17                  & 576          \\ \hline
\ec{1225.b1}   & $y^2+xy+y=x^3+x^2-208083x-36621194$       & 37                  & 2736       \\
\end{longtable}
\end{center} }

\bibliographystyle{amsalpha}
\bibliography{References}

\end{document}